\documentclass[11pt, reqno]{amsart}
\usepackage{amsmath, amssymb, mathrsfs, amsthm}
\usepackage{bm}
\usepackage{graphicx}
\usepackage{tikz} 
\usetikzlibrary{arrows, shapes, chains} 
\usepackage{booktabs}
\usepackage{float}
\usepackage{subfig}
\usepackage[colorlinks, linkcolor=blue, anchorcolor=blue, citecolor=blue, CJKbookmarks=True]{hyperref}
\usepackage{geometry}
\usepackage{color, xcolor}

\newtheorem{assumption}{Assumption}
\newtheorem{definition}{Definition}[section]
\newtheorem{lemma}[definition]{Lemma}
\newtheorem{theorem}[definition]{Theorem}
\newtheorem{proposition}[definition]{Proposition}

\numberwithin{equation}{section}
\allowdisplaybreaks[4]

\newcommand{ \rd}{\mathrm d}
\newcommand{\hC}{\mathbb C}
\newcommand{\hE}{\mathbb E}
\newcommand{\hN}{\mathbb N}
\newcommand{\hP}{\mathbb P}
\newcommand{\hR}{\mathbb R}

\newcommand{\cB}{\mathcal B}
\newcommand{\cC}{\mathcal C}
\newcommand{ \cE}{\mathcal E}
\newcommand{\cF}{\mathcal F}
\newcommand{\cG}{\mathcal G}

\newcommand{\cJ}{\mathcal J}
\newcommand{\cK}{\mathcal K}
\newcommand{\cM}{\mathcal M}

\newcommand{\cR}{\mathcal R}
\newcommand{\cY}{\mathcal Y}

\newcommand{\<}{\langle}
\renewcommand{\>}{\rangle}

\begin{document}

\title[Error distribution of MLE method for FSDE]{Asymptotic error distribution of Mittag--Leffler Euler method for a fractional stochastic differential equation}

\author{Xinjie Dai}
\address{School of Mathematics and Statistics, Yunnan University, Kunming 650500, Yunnan, China}
\email{dxj@ynu.edu.cn}

\author{Baiping Zhang}
\address{School of Mathematics and Statistics, Yunnan University, Kunming 650500, Yunnan, China}
\email{zhangbaiping@stu.ynu.edu.cn}

\author{Diancong Jin}
\address{School of Mathematics and Statistics, Huazhong University of Science and Technology, Wuhan 430074, China; 
Hubei Key Laboratory of Engineering Modeling and Scientific Computing, Huazhong University of Science and Technology, Wuhan 430074, China}
\email{jindc@hust.edu.cn (Corresponding author)}

\thanks{This work is supported by National Natural Science Foundation of China (Nos.\ 12401547, 12201228, 12471391), and Yunnan Fundamental Research Project (No.\ 202501AU070074), and Scientific Research and Innovation Project of Postgraduate Students in the Academic Degree of Yunnan University (No.\ KC-252513127)}

\begin{abstract}
In this paper, we investigate the asymptotic distribution of the normalized error for the Mittag–Leffler Euler (MLE) method applied to a class of multidimensional fractional stochastic differential equations. These equations are reformulated as stochastic Volterra equations (SVEs) featuring a non-diagonal, matrix-valued kernel $K(u)=u^{\alpha-1}E_{\alpha,\alpha}(Au^{\alpha})$ with singular exponent $\alpha \in (\frac{1}{2}, 1)$. To enhance computational efficiency, the singular kernel is discretized using the left-rectangle rule, posing technical challenges for the theoretical analysis. To address this, we introduce an auxiliary $K$-undiscretized scheme to bridge the gap between the exact solution and the MLE method, integrating Jacod's stable convergence theory for conditional Gaussian martingales with methodologies developed for SVEs. To the best of our knowledge, this is the first work to establish the asymptotic error distribution for numerical methods incorporating non-diagonal matrix-valued kernels. 
\end{abstract}

\keywords{Stochastic differential and integral equations, Mittag--Leffler Euler method, Asymptotic error distribution, Fractional calculus}

\maketitle

\textit{MSC 2020 subject classifications}: 
60H20, 45G05, 60H35

\section{Introduction}
 \label{sec.Intro}

Consider the $d$-dimensional semilinear fractional stochastic differential equation (FSDE)
\begin{align} \label{eq.FSDE}
D_{c}^{\alpha} X_t = A X_t + b(X_t) + \sigma(X_t) \dot{W}_t, \qquad t \in [0, T], 
\end{align} 
where $D_{c}^{\alpha}$ denotes the Caputo fractional derivative of order $\alpha \in (\frac{1}{2}, 1)$, $A \in \hR^{d \times d}$, $b: \hR^d \rightarrow \hR^d$, $\sigma: \hR^d \rightarrow \hR^{d \times m}$, and $W$ is an $m$-dimensional Brownian motion defined on some complete filtered probability space $(\Omega, \cF, \{\cF_t\}_{0 \leq t \leq T}, \mathbb{P})$ satisfying the usual conditions. Typical applications of \eqref{eq.FSDE} arise from the spatial discretizations of fractional time stochastic partial differential equations (SPDEs), rough volatility models in mathematical finance, and anomalous diffusion models in statistical physics; see, e.g., \cite{AbiLarsson2019, BayerBook, ChenKim2015, LiLiuLu2017} and references therein.

As in classical stochastic differential equations (SDEs, corresponding to the case $\alpha = 1$), the differential form \eqref{eq.FSDE} is purely symbolic and should be understood as the integral equation
\begin{align} \label{eq.SIE}
X_{t} = X_{0} + \frac{1}{\Gamma(\alpha)} \int_{0}^{t} (t-s)^{\alpha-1} \big( A X_s + b(X_s) \big) \rd s + \frac{1}{\Gamma(\alpha)} \int_{0}^{t} (t-s)^{\alpha-1} \sigma(X_s) \rd W_{s}, 
\end{align}
where $\Gamma$ denotes the Gamma function. If the drift coefficient $b$ and the diffusion coefficient $\sigma$ together satisfy the global Lipschitz condition, then the equation \eqref{eq.SIE} admits a unique strong solution; see, e.g., \cite{AbiLarsson2019, DoanKloedenHuongTuan2018}. Alternatively, the semilinear FSDE \eqref{eq.FSDE} can be interpreted as the following stochastic Volterra equation (SVE) in convolution form:\ 
\begin{align} \label{eq.SVE}
X_{t} = E_{\alpha}(A t^{\alpha}) X_{0} + \int_{0}^{t} K(t-s) b(X_s) \rd s + \int_{0}^{t} K(t-s) \sigma(X_s) \rd W_{s}, 
\end{align}
based on the equivalence between the strong and mild solutions of \eqref{eq.FSDE}; see \cite{AnhDoanHuong2019, Tuan2021}. Here, the $\hR^{d \times d}$-valued Volterra kernel $K(u) := u^{\alpha-1} E_{\alpha, \alpha}(A u^{\alpha})$ is singular at the origin for $\alpha \in (\frac{1}{2}, 1)$, and $E_{\alpha}$ and $E_{\alpha, \alpha}$ denote the Mittag--Leffler functions; see Appendix \ref{appen:ML function} for more details.

It is well known that the error analysis of direct discretizations for the Caputo derivative $D_{c}^{\alpha} X_t$ relies, often as a standard assumption, on the time differentiability of the solution $X_t$ \cite{LinXu2007, SunWu2006}, a property that fails in the stochastic context due to the almost surely non-differentiability of the Brownian motion $W$. Perhaps for this reason, existing numerical methods for the FSDE \eqref{eq.FSDE} are constructed via approximating its integral form \eqref{eq.SIE} or \eqref{eq.SVE}. On one hand, numerical approximations of the equation \eqref{eq.SIE} include the Euler--Maruyama (EM) method, the fast EM method, the $\theta$-EM method, and the Milstein method; see, e.g., \cite{AlfonsiKebaier2024, DaiXiao2020, FangLi2020, JourdainPages2025, LiHuangHu2022, RichardTanYang2021, Zhang2008} and references therein. On the other hand, \cite{DoanHuongKloedenVu2020} proposed an exponential EM method, which we refer to in this paper as the Mittag--Leffler Euler (MLE) method, to solve the equation \eqref{eq.SVE}. More specifically, the MLE method applied to the semilinear FSDE \eqref{eq.FSDE} is formulated as follows:\ 
\begin{align} \label{eq.MLE}
\widehat{X}_{t} = E_{\alpha}(A t^{\alpha}) X_{0} + \int_{0}^{t} K(t-\underline{s}) b(\widehat{X}_{\underline{s}}) \rd s + \int_{0}^{t} K(t-\underline{s}) \sigma(\widehat{X}_{\underline{s}}) \rd W_{s}, \qquad t \in [0, T], 
\end{align} 
where $\underline{s} := \left \lfloor s/h \right \rfloor\! h$ denotes the left grid point of $s \in [0, T]$ with step size $h = T/n$. As a main advantage over other numerical methods, the MLE method \eqref{eq.MLE} unconditionally inherits the asymptotic mean-square stability of the scalar bilinear test equation \cite{DoanHuongKloedenVu2020}.

At present, strong approximation methods for FSDEs have been extensively investigated, and these advances have in turn stimulated further research on the associated asymptotic error distribution. The study of the asymptotic error distribution originated in \cite{KurtzProtter1991} and has since developed rapidly, with an expanding literature and substantial progress for numerical schemes of SDEs \cite{HLS2026,Hu2016, Jacod1997, JacodProtter1988, Jin2025arXiv, Liu2023, Protter2020}, SPDEs \cite{Hong2025arXiv, Hong2024arXiv}, and SVEs \cite{FukasawaHojo2025, FukasawaUgai2023, LiuHuGao2025, NualartSaikia2023}. The asymptotic error distribution of a stochastic numerical method refers to the limiting distribution of the normalized error between the exact solution and its numerical approximation as the step size tends to zero, where the normalization is determined by the strong convergence order of the method. For example, the normalized error of the MLE method \eqref{eq.MLE} is given by 
\begin{align*} 
U_t^n := n^{\alpha-\frac{1}{2}} (X_t - \widehat{X}_{t}), \qquad t \in [0,T], 
\end{align*} 
where the strong convergence order $\alpha-\frac{1}{2}$ follows from a slight extension of the result in \cite{DoanHuongKloedenVu2020}. Then, the asymptotic error distribution of the MLE method \eqref{eq.MLE} is defined as the limiting distribution of $U_t^n$ as the step size $h$ tends to zero, or equivalently, as the total number of time steps $n$ goes to infinity. As a kind of generalized central limit theorem, once the asymptotic error distribution of the MLE method \eqref{eq.MLE} is established, the resulting nontrivial limiting distribution not only confirms the optimality of the strong convergence order $\alpha-\frac{1}{2}$ but also determines the optimal choice of tuning parameters for the related multilevel Monte Carlo method \cite{BenAlaya2015, Liu2025}. Together with the aforementioned numerical stability, this motivates us to study the asymptotic error distribution of the MLE method \eqref{eq.MLE} in this paper.

For the MLE method \eqref{eq.MLE}, there exists a variant in which the kernel is not discretized, given by 
\begin{align} \label{eq.KMLE}
\bar{X}_{t} = E_{\alpha}(A t^{\alpha}) X_{0} + \int_{0}^{t} K(t-s) b(\bar{X}_{\underline{s}}) \rd s + \int_{0}^{t} K(t-s) \sigma(\bar{X}_{\underline{s}}) \rd W_{s}, 
\end{align}
whose strong convergence order is also $\alpha - \frac{1}{2}$. The variant \eqref{eq.KMLE} requires the simulation of a large Gaussian covariance matrix, whereas the MLE method \eqref{eq.MLE} requires only the simulation of independent Brownian increments. When the matrix $A$ reduces to the constant $0$, \cite{FukasawaUgai2023} shows that the normalized error of the variant \eqref{eq.KMLE} stably converges in law in the H\"older space $\cC_0^{\alpha-\frac{1}{2}-\varepsilon}$ to the solution of a certain linear SVE, based on Jacod's theory of convergence in distribution for conditional Gaussian martingales. However, for the $K$-discrete MLE method \eqref{eq.MLE}, the limiting distribution of the normalized error cannot be expected to possess any H\"older continuity properties; see \cite{NualartSaikia2023} for the case where the matrix $A = 0 \in \hR$ and the drift coefficient $b \equiv 0 \in \hR^d$. Therefore, to analyze the asymptotic error distribution of the low-cost MLE method \eqref{eq.MLE}, we introduce an auxiliary $K$-undiscretized scheme 
\begin{align} \label{eq.auxiMLE}
\widetilde{X}_{t} = E_{\alpha}(A t^{\alpha}) X_{0} + \int_{0}^{t} K(t-s) b(\widehat{X}_{\underline{s}}) \rd s + \int_{0}^{t} K(t-s) \sigma(\widehat{X}_{\underline{s}}) \rd W_{s} 
\end{align}
and decompose the normalized error $U_t^n$ as follows:\ 
\begin{align*} 
U_t^{n} 
= \underbrace{ n^{\alpha-\frac{1}{2}} (X_t-\widetilde{X}_t) }_{=:\, Y_t^{n}} + \underbrace{ n^{\alpha-\frac{1}{2}} (\widetilde{X}_t -\widehat{X}_t) }_{=:\, R_t^{n}}, \qquad t \in [0,T]. 
\end{align*} 
By leveraging Jacod's stable convergence theory for conditional Gaussian martingales and extending the analytical framework for SVEs, we establish that $Y^n$ converges stably in law to the unique solution of a linear SVE and that the finite-dimensional distributions of $R^n$ converge to a centered Gaussian process, as detailed in Theorems \ref{thm.Yn} and \ref{thm.An}, respectively. These results provide a rigorous theoretical foundation for the optimality of the strong convergence order $\alpha-\frac{1}{2}$ for the MLE method \eqref{eq.MLE}, thereby justifying its efficiency in the simulation of FSDEs. Beyond the present scope, our approach possesses potential applications in characterizing the asymptotic error distributions of numerical methods for fractional SPDEs. We would also mention that the properties of Volterra kernels play a crucial role in the study of asymptotic error distributions, and to the best of our knowledge, the case of a general matrix kernel $K$ with $A \in \hR^{d \times d}$ is investigated for the first time in this paper. We also refer to \cite{FukasawaHojo2025} for the case of diagonal matrix kernels.

The remainder of the paper is organized as follows. In Section \ref{sec.MainRes}, we present the main results concerning the limiting distribution of the normalized error. Section \ref{sec.ProofsMainRes} is devoted to the proofs of main results. Sections \ref{sec.asym_behav_Y_n} and \ref{sec.asym_behav_R_n} provide auxiliary lemmas on the asymptotic behaviors of $Y^n$ and $R^n$, respectively. Finally, Appendix \ref{appen:ML function} recalls the definition of the Mittag--Leffler functions and collects useful estimates for the kernel function $K$.

\textbf{Notation}. Throughout this paper, we use the following notation. 
\begin{itemize}
\item $\hE $: The expectation corresponding to $\hP$.
\vskip 0.3em

\item $\lfloor \,\cdot\, \rfloor$ and $\lceil \,\cdot\, \rceil$: the floor and ceil functions, respectively.
\vskip 0.3em

\item For $u>0$, we may write the kernel function as
\[
K(u) = u^{\alpha-1} E_{\alpha, \alpha}(A u^{\alpha}) = \cK(u)\,\cE(u),
\]
where $\cK(u) = u^{\alpha-1}$ and $\cE(u) = E_{\alpha, \alpha}(A u^{\alpha})$; see Appendix~\ref{appen:ML function} for their properties. 
\vskip 0.3em

\item Let $|\cdot|$ denote both the Euclidean norm on $\mathbb{R}^d$ and the trace (or Frobenius) norm on $\mathbb{R}^{d \times m}$.
\vskip 0.3em

\item For any matrix $M$, we denote by $M^\top$ its transpose, and by $M_{j}^{i}$ its $(i,j)$-th entry. 
\vskip 0.3em

\item $\mathcal{C}_0^{\lambda}$: The space of $\mathbb{R}^d$-valued $\lambda$-H\"older continuous functions on $[0, T]$ vanishing at $t=0$.
\vskip 0.3em

\item $\mathcal{D}_d$: The space of $\mathbb{R}^d$-valued c\`adl\`ag functions on $[0, T]$ equipped with the Skorokhod topology. 

\vskip 0.3em

\item Let $C > 0$ denote a generic positive constant, which may vary from line to line. 
\end{itemize}

\section{Main results}
 \label{sec.MainRes}

To present the main results of this paper, we always assume the matrix $A \in \hR^{d \times d}$ is negative definite. Without loss of generality, we also set $T = 1$. Furthermore, we impose the following assumption on the coefficients of FSDE \eqref{eq.FSDE}.

\begin{assumption} \label{ass.main}
Assume that the drift coefficient $b: \hR^d \to \hR^d$ and the diffusion coefficient $\sigma: \hR^d \to \hR^{d \times m}$ are continuously differentiable, with their derivatives being bounded and uniformly continuous. 
\end{assumption}

Clearly, it follows from Assumption \ref{ass.main} that $b$ and $\sigma$ satisfy the global Lipschitz condition, that is to say, there exists \( L > 0 \) such that for all \( x, y \in \hR^n \),
\begin{align}
|b(x)-b(y)| + |\sigma(x)-\sigma(y)| \leq L|x-y|. \label{eq.lips} 
\end{align} 
Next, we collect some basic properties of the exact and numerical solutions. Their proofs are based on standard arguments using H\"older's inequality, the Burkholder--Davis--Gundy inequality, \eqref{eq.lips}, and Gronwall's inequality. For related proofs, we also refer to \cite{DoanHuongKloedenVu2020, LiHuangHu2022}.

\begin{proposition} \label{pro.X}
Let $X$ be the strong (or mild) solution of \eqref{eq.FSDE}, $\widehat{X}$ and $\widetilde{X}$ be the numerical solutions defined by \eqref{eq.MLE} and \eqref{eq.auxiMLE}, respectively. Then for any $p \geq 1$, there exists $C>0$ such that for any $s,\,t\in [0,T]$, 
\begin{align}
& \| X_t \|_{L^p} + \| \widehat{X}_t \|_{L^p} + \| \widetilde{X}_t \|_{L^p} 
\leq C, \label{eq.momentBounded} \\
& \| X_t - X_s \|_{L^p} + \| \widehat{X}_t - \widehat{X}_s \|_{L^p} + \| \widetilde{X}_t - \widetilde{X}_s \|_{L^p} 
\leq C |t-s|^{\alpha-\frac{1}{2}}, \label{eq.solutionReqularity} \\
& \| X_t - \widehat{X}_t \|_{L^p} + \| X_t - \widetilde{X}_t \|_{L^p} \leq C h^{\alpha-\frac{1}{2}}. \label{eq.strongConvergence}
\end{align}
Here and hereafter, $\| \cdot \|_{L^p} := \| \cdot \|_{L^p (\Omega;\, \hR^d)}$. 
\end{proposition}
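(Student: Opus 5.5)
Taking the kernel estimates collected in Appendix \ref{appen:ML function} as given, I will use the following three facts. First, $|\cE(u)| \le C$ for $u\in[0,1]$, since $A$ is negative definite and $E_{\alpha,\alpha}(Au^\alpha)$ is continuous on the compact interval. Hence $|K(u)|\le Cu^{\alpha-1}$, and $K\in L^2(0,1)$ because $\alpha>\frac12$. Second, $|K(u)-K(v)|\le C|u^{\alpha-1}-v^{\alpha-1}| + C v^{\alpha-1}|u^\alpha - v^\alpha|$. This follows from writing $K=\cK\cE$ and using that $E_{\alpha,\alpha}$ is locally Lipschitz. Third, $|E_\alpha(At^\alpha)-E_\alpha(As^\alpha)|\le C|t-s|^\alpha$.

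The first two facts yield the standard bounds
\[
\int_0^\delta |K(u)|^2\rd u\le C\delta^{2\alpha-1},
\qquad
\int_0^{1}|K(u+\delta)-K(u)|^2\rd u\le C\delta^{2\alpha-1},
\]
together with the discretization estimate
\[
\int_0^t |K(t-s)-K(t-\underline s)|^2\rd s\le Ch^{2\alpha-1}.
\]
For the last bound, split the integral into the last one or two subintervals near $s=t$, where each kernel is bounded in $L^2$ by $Ch^{2\alpha-1}$, and the remaining region, where one uses $|\cK'(u)|\le Cu^{\alpha-2}$ and $\int_{h}^\infty h^2u^{2\alpha-4}\rd u\le Ch^{2\alpha-1}$.

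\textbf{Moment bounds \eqref{eq.momentBounded}.} For $X$, apply the BDG and Hölder inequalities to \eqref{eq.SVE} together with the linear growth implied by \eqref{eq.lips}. This gives
\[
\hE|X_t|^p\le C+C\int_0^t (t-s)^{2\alpha-2}\hE|X_s|^p\rd s,
\]
after using Hölder with the measure $(t-s)^{2\alpha-2}\rd s$, which is finite since $\alpha>\frac12$. A fractional (Henry-type) Gronwall inequality then gives a uniform bound. I will run this on $\sup_{t}$ of a stopped or truncated quantity, or via Picard iterates, to ensure finiteness a priori.

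For $\widehat X$ the same argument works with the kernel $K(t-\underline s)$, since $|K(t-\underline s)|\le C(t-\underline s)^{\alpha-1}\le C(t-s)^{\alpha-1}$. The inequality is then closed on $\sup_{r\le t}\hE|\widehat X_r|^p$ at grid points, and this is valid because $\widehat X_{\underline s}$ only involves earlier times. The bound for $\widetilde X$ follows directly from that for $\widehat X$, since $\widetilde X$ is driven by $\widehat X_{\underline s}$.

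\textbf{Regularity \eqref{eq.solutionReqularity}.} For $s<t$, split each stochastic convolution into two pieces: $\int_s^t K(t-r)(\cdots)\rd W_r$ and $\int_0^s [K(t-r)-K(s-r)](\cdots)\rd W_r$. Use BDG with the moment bounds and then the two $L^2$ kernel estimates above. The drift terms and the $E_\alpha$ term are easier and give order $|t-s|^{\alpha}$ at worst.

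For $\widehat X$ the difference kernel is $K(t-\underline r)-K(s-\underline r)$. On the region $r<\underline s$, I will rewrite it as a difference of $K$ evaluated at shifted arguments $t-\underline r$ and $s-\underline r$ with the same shift, so that the same integral bound $C|t-s|^{2\alpha-1}$ applies after the change of variables $u=s-\underline r\ge s-r$, monotonicity of the increments of $\cK$ in $u$ controlling the sum. The region $r\in[\underline s,s]$ has length at most $h$ and is handled separately.

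I expect this to be the main obstacle: the case $|t-s|<h$ within one cell requires care so that the bound reads $|t-s|^{\alpha-\frac12}$ rather than $h^{\alpha-\frac12}$. What I would try first is to use that $t-\underline r\ge t-s$ whenever $\underline r\le s$.

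\textbf{Strong convergence \eqref{eq.strongConvergence}.} Write $X_t-\widehat X_t=I_1+I_2+I_3$, where
\begin{itemize}
\item[] $I_1=\int_0^t K(t-s)\big[\sigma(X_s)-\sigma(X_{\underline s})\big]\rd W_s$ (plus the analogous drift term), which is $O(h^{\alpha-\frac12})$ by \eqref{eq.solutionReqularity} for $X$;
\item[] $I_2=\int_0^t \big[K(t-s)-K(t-\underline s)\big]\sigma(\widehat X_{\underline s})\rd W_s$, which is $O(h^{\alpha-\frac12})$ by the discretization estimate and the moment bounds;
\item[] $I_3=\int_0^t K(t-\underline s)\big[\sigma(X_{\underline s})-\sigma(\widehat X_{\underline s})\big]\rd W_s$, which is bounded by $C\int_0^t (t-s)^{2\alpha-2}\|X_{\underline s}-\widehat X_{\underline s}\|_{L^p}^2\rd s$ via \eqref{eq.lips}.
\end{itemize}
Setting $e(t)=\sup_{r\le t}\|X_r-\widehat X_r\|^2_{L^p}$ and applying the fractional Gronwall inequality gives $e\le Ch^{2\alpha-1}$. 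Finally, $X-\widetilde X$ is handled without any $I_2$ term, using Lipschitz bounds and the estimate just obtained for $X-\widehat X$.
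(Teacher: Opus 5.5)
Your proposal is correct and is the standard BDG/H\"older/Lipschitz/Gronwall argument the paper only alludes to. You close with a fractional Gronwall inequality, whereas the paper's Lemma~\ref{lm.KHZ} sets up the classical one for $p>2\beta^*$; this is an inessential difference. The obstacle you flag for $\widehat X$ is settled on all of $[0,s]$, current cell included, by the pointwise bound $0\le \cK(s-\underline r)-\cK(t-\underline r)\le \cK(s-r)-\cK(t-r)$, which follows from $s-\underline r\ge s-r$ and the monotonicity you cite. The one thing to fix is bookkeeping: as written, $I_1+I_2+I_3$ omits the cross term $\int_0^t[K(t-s)-K(t-\underline s)][\sigma(X_{\underline s})-\sigma(\widehat X_{\underline s})]\rd W_s$, so use $\sigma(X_{\underline s})$ in $I_2$ (or $K(t-s)$ in $I_3$), and all your estimates go through unchanged.
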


As a direct consequence of \eqref{eq.solutionReqularity} and \eqref{eq.strongConvergence}, for any $p \geq 1$, there exists $C>0$ such that for any $s,\,t\in [0,T]$, 
\begin{align} \label{X-hatX_Lp}
\| {X}_t - \widehat{X}_s \|_{L^p} + \| \widetilde{X}_t - \widehat{X}_s \|_{L^p} 
\leq C \big( |t-s|^{\alpha-\frac12} + h^{\alpha-\frac12} \big). 
\end{align}
Moreover, it follows from \eqref{eq.solutionReqularity}, \eqref{eq.strongConvergence}, and the Garsia--Rodemich--Rumsey lemma \cite[Lemma 4.3 or Corollary 4.4]{RichardTanYang2021} that 
\begin{align} \label{X-tildeX_LpC} 
\| X - \widetilde{X} \|_{L^{p}(\Omega;\, \cC([0,T];\, \hR^d))} 
\leq C h^{\alpha-\frac12-\epsilon}, 
\end{align} 
where $\epsilon \in (0,\alpha-\frac{1}{2})$ can be arbitrary small.

Before presenting our main results, we recall the definition of stable convergence in law introduced by \cite{JacodProtter1988}. We remark that stable convergence in law is stronger than convergence in law, yet weaker than convergence in probability.

\begin{definition}Let $\{X_n\}$ be a sequence of random variables taking values in a Polish space $E$, all defined on the same probability space $(\Omega, \mathcal{F}, P)$. We say that $X_n$ converges stably in law to $X$, denoted by $X_n \Rightarrow^{\text{stably}} X$, if there exists an extension
$(\widetilde{\Omega}, \widetilde{\mathcal{F}}, \widetilde{P})$ of the original probability space and if
\begin{align*}
\lim_{n \to \infty} \hE(Hf(X_n)) = \widetilde {\hE}(Hf(X)) 
\end{align*}
for every bounded continuous function $f: E \to \mathbb{R} $ and every bounded measurable random variable $H$. 
\end{definition}

The main results of this paper are summarized in the following two theorems. Recall that the normalized error $U^n$ is decomposed as $U^n = Y^n + R^n$. Our first main result is presented in the following theorem, which characterizes the limiting behavior of $Y^n$.

\begin{theorem} \label{thm.Yn}
As $n$ tends to infinity, the process $Y^{n} = n^{\alpha-\frac{1}{2}} (X-\widetilde{X})$ converges stably in law in $\mathcal{C}_0^{\alpha-\frac{1}{2}-\epsilon}$ to a continuous process $Y = (Y^{1}, \cdots, Y^{d})^{\top}$ for arbitrary $\epsilon \in (0, \alpha - \frac{1}{2})$, which is the unique solution of the linear SVE 
\begin{align} \label{eq.Y_t^i} 
Y_{t}^{i}
&= \sum_{j=1}^{d} \sum_{k=1}^{d} \int_{0}^{t} K^{i}_{j}(t-s) Y_{s}^{k} \Big(\partial_{k} b^{j}(X_{s}) \rd s + \sum_{\ell=1}^{m}\partial_{k} \sigma_{\ell}^{j}(X_{s}) \rd W_{s}^{\ell} \Big) \notag \\ 
&\quad + \kappa_{1}(\alpha) \sum_{j=1}^{d} \sum_{\ell=1}^{m} \sum_{k=1}^{d} \sum_{q=1}^{m} \int_{0}^{t} K^{i}_{j}(t-s) \partial_{k} \sigma_{\ell}^{j}(X_{s}) \sigma_{q}^{k}(X_{s}) \rd B_{s}^{q,\ell}, \qquad i = 1, \cdots, d.
\end{align} 
Here, $B$ is an $m^2$-dimensional standard Brownian motion independent of \(\cF\) and defined on some extension of $(\Omega, \cF, \hP)$, as well as the constant 
\begin{align} \label{eq.kappa1}
\kappa_{1}^{2}(\alpha) 
= \frac{1}{\Gamma^{2}(\alpha)}\left(\int_{0}^{1} \int_{0}^{\infty} \left((y + x)^{\alpha-1}-( \lceil y \rceil)^{\alpha-1} \right)^{2} \rd y \rd x + \frac{1}{2\alpha(2\alpha-1)}\right).
\end{align}
\end{theorem}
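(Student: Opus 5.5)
We plan to follow the framework of Fukasawa--Ugai for the case $A=0$, adapted to the matrix kernel $K=\cK\,\cE$. Subtracting \eqref{eq.auxiMLE} from \eqref{eq.SVE} and Taylor expanding $b$ and $\sigma$ gives
\begin{align*}
Y^n_t = \int_0^t K(t-s)\big(\nabla b(\xi^n_s) Y^n_s \rd s + \nabla\sigma(\eta^n_s) Y^n_s \rd W_s\big) + n^{\alpha-\frac12}\int_0^t K(t-s)\big(\nabla b\,(X^\sharp_s-\widehat X_{\underline s})\rd s + \nabla\sigma\,(X^\sharp_s-\widehat X_{\underline s})\rd W_s\big),
\end{align*}
with intermediate points. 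Here $X^\sharp$ stands for $\widetilde X$, and we also use $\widetilde X_s - \widehat X_{\underline s} = (\widetilde X_s-\widetilde X_{\underline s}) + R^n_{\underline s}n^{\frac12-\alpha}$. Since Assumption \ref{ass.main} only gives uniformly continuous derivatives, we will replace all derivatives evaluated at intermediate points by $\nabla b(X_s)$ and $\nabla\sigma(X_s)$. The replacement costs $o(1)$ in $L^p$, by \eqref{eq.strongConvergence}, \eqref{X-hatX_Lp} and the moment bounds of Proposition \ref{pro.X}. The term containing $R^n$ must be controlled: after multiplying by $n^{\frac12-\alpha}$ it equals $\widetilde X_{\underline s}-\widehat X_{\underline s}$, which is $O(h^{\alpha-\frac12})$ by \eqref{eq.strongConvergence}. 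We will therefore need the extra bound $\|\widetilde X_t-\widehat X_t\|_{L^p}=o(n^{\frac12-\alpha})$, or a regrouping showing that this term contributes negligibly inside the stochastic integral. We expect this to follow from the small discrepancy $K(t-s)-K(t-\underline s)$ integrated in $L^2$ over $s$. The drift part of the source, $n^{\alpha-\frac12}\int K\nabla b(\widetilde X_s-\widetilde X_{\underline s})\rd s$, vanishes in probability, because the leading increment is a martingale-type term and a stochastic Fubini argument yields an extra factor of $h^{1/2}$.

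The heart of the proof is the leading source term
\[
M^n_t := n^{\alpha-\frac12}\int_0^t K(t-s)\nabla\sigma(X_s)\big(\widetilde X_s-\widetilde X_{\underline s}\big)\rd W_s .
\]
Here we replace $\widetilde X_s-\widetilde X_{\underline s}$ by its dominant part
\[
\int_0^{s}\big(K(s-u)-K(\underline s-u)\mathbf 1_{u<\underline s}\big)\sigma(X_{\underline u})\rd W_u ,
\]
freezing $\sigma(X_u)$ at $\sigma(X_s)$ up to negligible error. Rather than working with the Volterra process directly, we follow Fukasawa--Ugai and apply Jacod's theorem for conditional Gaussian martingales to the semimartingale pieces
\[
\mathcal M^{n,k\ell q}_t=n^{\alpha-\frac12}\int_0^t\Big(\int_0^s \big(K(s-u)-K(\underline s-u)\big)^k_{\cdot}\rd W^q_u\Big)\rd W^\ell_s .
\]
Two ingredients are needed. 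The first is the convergence of the brackets $\langle \mathcal M^n\rangle_t \to \kappa_1^2(\alpha)\,t$ times an identity structure. The second is $\langle\mathcal M^n,W\rangle_t\to0$. For the first, we will show that only the behavior of $K$ near $0$ matters. Since $\cE(u)=I+O(u^\alpha)$ by the Appendix estimates, we have $K(u)=\cK(u)/\Gamma(\alpha)\cdot I+O(u^{2\alpha-1})$, and the perturbation contributes $o(1)$ after scaling by $n^{2\alpha-1}$. The scalar computation then mirrors the case $A=0$. Substituting $u\mapsto$ the grid scale and splitting into the part near $s$ (which produces $\frac1{2\alpha(2\alpha-1)}$) and the far part (which produces the double integral in \eqref{eq.kappa1}) identifies $\kappa_1$; a Riemann-sum/ergodic averaging argument is used in $s$. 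Stable convergence of $\mathcal M^n$ to $\kappa_1 B$ then follows. Passing from $\mathcal M^n$ to $M^n$ with the weight $K(t-s)\nabla\sigma(X_s)\sigma(X_s)$ requires a stochastic-integral continuity argument: we approximate the integrand by step functions in $s$ and use the uniform $L^2$ bounds to justify the limit.

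Finally, we will deduce the convergence of $Y^n$ from that of the source terms. First, tightness of $Y^n$ in $\cC_0^{\alpha-\frac12-\epsilon}$ follows from $\|Y^n_t-Y^n_s\|_{L^p}\le C|t-s|^{\alpha-\frac12}$ together with the Kolmogorov and Garsia--Rodemich--Rumsey criteria, where we use the kernel regularity from the Appendix and the uniform bound $\sup_n\|Y^n_t\|_{L^p}<\infty$ obtained from \eqref{eq.strongConvergence}. Second, we need continuity of the solution map of the linear SVE \eqref{eq.Y_t^i} with respect to its forcing term. Using a Volterra resolvent or Picard iteration argument with Gronwall's inequality for weakly singular kernels, this shows that the joint stable limit of (source, $W$) determines the limit of $Y^n$. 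Uniqueness for \eqref{eq.Y_t^i} follows from the same Lipschitz and Gronwall estimates. We expect the main obstacle to be the bracket computation for $M^n$ with the non-diagonal kernel, in particular proving that the cross terms involving $\cE-I$ and the mixing of matrix entries vanish uniformly, together with the control of the $R^n$ feedback term.
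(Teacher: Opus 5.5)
\textbf{Gap: the $R^n$ term is not a remainder.} You split $\widetilde X_s-\widehat X_{\underline s}=(\widetilde X_s-\widetilde X_{\underline s})+n^{\frac12-\alpha}R^n_{\underline s}$ and treat the $R^n$ piece as negligible, and this fails.

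The bound you say you need, $\|\widetilde X_t-\widehat X_t\|_{L^p}=o(n^{\frac12-\alpha})$, is false. The rate in \eqref{eq.K-reg} is sharp: already $n^{2\alpha-1}\int_0^{\underline t}|\cK(\underline t-s)-\cK(\underline t-\underline s)|^2\rd s=\sum_{m=1}^{\lfloor nt\rfloor}\int_0^1((m-x)^{\alpha-1}-m^{\alpha-1})^2\rd x$ tends to a positive constant. Moreover, Theorem~\ref{thm.An} shows that $R^n_{\underline t}$ has the nondegenerate Gaussian limit $\widetilde{\cR}_t$.

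No regrouping makes the piece negligible inside the stochastic integral either. The $R^n$-part of the source, $\int_0^\cdot R^{n,k}_{\underline s}\rd W^\ell_s$, has second moment $\int_0^t\hE|R^{n,k}_{\underline s}|^2\rd s$. By convergence in law and Fatou's lemma, its $\liminf$ is at least $\kappa_2^2(\alpha)\int_0^t\sum_q\hE|\sigma^k_q(X_s)|^2\rd s$. Since everything is bounded in every $L^p$ (Proposition~\ref{pro.X}), it does not vanish in probability either. The feedback from discretizing the kernel is part of the leading-order noise.

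\textbf{This shows up in the constant.} Your dominant part uses the undiscretized difference $K(s-u)-K(\underline s-u)$. With $y=\lfloor ns\rfloor-nu$ and $x=ns-\lfloor ns\rfloor$, the far part of the bracket then produces $\Gamma^{-2}(\alpha)\int_0^1\int_0^\infty((y+x)^{\alpha-1}-y^{\alpha-1})^2\rd y\rd x$. That is what one gets for the scheme \eqref{eq.KMLE}, the Fukasawa--Ugai setting, and it is not the integral in \eqref{eq.kappa1}. The two differ: as $\alpha\downarrow\frac12$ the former blows up through the singularity $y^{2\alpha-2}$ at $y=0$, while the latter stays bounded because $\lceil y\rceil\geq1$. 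The $\lceil y\rceil$ arises only because the kernel is discretized in both time arguments.

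The paper therefore never separates $R^n$. It applies Jacod's theorem to $\hat V^{n,k,\ell}=n^{\alpha-\frac12}\int_0^\cdot(\widetilde X^k_s-\widehat X^k_{\underline s})\rd W^\ell_s$ as a whole. Its martingale part has kernel $K(s-u)-K(\underline s-\underline u)$ on $[0,\underline s]$ and $K(s-u)$ on $[\underline s,s]$, and then $n(\underline s-\underline u)=\lceil y\rceil$ (Lemmas~\ref{lm.cJ_n} and~\ref{lm.main}). To repair your argument you must keep the $R^n$ term and compute the joint bracket of both pieces, including the $|R^n|^2$ and cross terms; this reproduces the paper's computation.

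Two smaller points. First, $\cE(u)=\Gamma(\alpha)^{-1}I+O(u^\alpha)$, not $I+O(u^\alpha)$, although your resulting formula for $K$ is right. Second, the source converges only stably in law, and the limit equation contains $\int K\,Y\nabla\sigma\,\rd W$. So ``continuity of the solution map'' via Gronwall-type $L^p$ estimates is not by itself a valid passage to the limit. You need tightness, identification of limit points, and uniqueness in law, as in Lemma~\ref{lm.group}(iii)--(v).
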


To state another main result, let $Z := \{Z_{\ell,t}^{i,j}, t \in [0,T]\}$ denote a $d^2m$-dimensional centered Gaussian process on an extended probability space, independent of $W$, such that 
\begin{align*} 
\hE \big[ Z_{\ell_1,t_1}^{i_1,j_1}Z_{\ell_2,t_2}^{i_2,j_2} \big] 
= 
\begin{cases} 
1, & i_1 = j_1,\, i_2=j_2,\, \ell_1=\ell_2, \text{ and } t_1 = t_2, \\
0, & \text{otherwise}, 
\end{cases}
\end{align*} 
for $(i,i_1,i_2,j,j_1,j_2) \in \{1,2,\cdots,d\}^6$, $(\ell,\ell_1,\ell_2) \in \{1,2,\cdots,m\}^3$, and $(t, t_1, t_2) \in [0,T]^3$. For $t \in [0,T]$, we also define $\widetilde{\cR}_t = (\widetilde{\cR}_t^{1}, \widetilde{\cR}_t^{2}, \cdots, \widetilde{\cR}_t^{d})^{\top}$ with 
\begin{align} \label{eq.def:widetilde_cR}
\widetilde{\cR}_t^{i} 
= \kappa_{2}(\alpha) \sum_{j=1}^{d} \sum_{\ell=1}^{m} \sigma^j_{\ell}(X_t) Z_{\ell,t}^{i,j}, \qquad i \in \{1,2,\cdots,d\}, 
\end{align}
where the constant
\begin{align} \label{eq.kappa2}
\kappa_{2}^{2}(\alpha) = \frac{1}{\Gamma^{2}(\alpha)}\int_{0}^{1}\int_{0}^{\infty}\left((\lceil y \rceil-x)^{\alpha-1} - (\lceil y \rceil)^{\alpha-1} \right)^2 \rd y \rd x. 
\end{align}

The following theorem presents the second main result, which describes the limit behavior of $R^n$.

\begin{theorem} \label{thm.An} 
As $n$ goes to infinity, the finite-dimensional distributions of the process $\{R^{n}_{\underline{t}},t\in(0,T]\}$ converge in distribution to those of $\{\widetilde{\cR}_t,\,t\in(0,T]\}$. 
\end{theorem}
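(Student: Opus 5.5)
We prove Theorem \ref{thm.An} by isolating the dominant martingale part of $R^n_{\underline t}$ and then applying a conditional central limit theorem across time points. Write $t_n:=\underline t$. By \eqref{eq.auxiMLE} and \eqref{eq.MLE},
\begin{align*}
R^n_{t_n} = n^{\alpha-\frac12}\int_0^{t_n}\big(K(t_n-s)-K(t_n-\underline s)\big)b(\widehat X_{\underline s})\,\rd s + n^{\alpha-\frac12}\int_0^{t_n}\big(K(t_n-s)-K(t_n-\underline s)\big)\sigma(\widehat X_{\underline s})\,\rd W_s .
\end{align*}

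\textbf{Step 1: the drift term is negligible.} The appendix estimates on $\cK$ and $\cE$ give $|K(t_n-s)-K(t_n-\underline s)|\le C\big((t_n-s)^{\alpha-1}-(t_n-\underline s)^{\alpha-1}\big)+Ch^{\alpha}$. Hence the drift integrand is $O(h^{\alpha})$ in $L^1(\rd s)$, and the drift term is $O(n^{\alpha-\frac12}h^{\alpha})=O(n^{-1/2})$ in $L^p$ by \eqref{eq.momentBounded}.

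\textbf{Step 2: localize the stochastic integral.} Split $[0,t_n]=[0,t_n-\delta]\cup[t_n-\delta,t_n]$ for fixed $\delta>0$. On $[0,t_n-\delta]$ the kernel difference is $O(h)$, since $K$ is smooth away from $0$, so that part is $O(n^{\alpha-\frac12}h)\to0$. On $[t_n-\delta,t_n]$, replace $\sigma(\widehat X_{\underline s})$ by $\sigma(X_{t_n})$. By BDG, \eqref{X-hatX_Lp} and the Lipschitz bound \eqref{eq.lips}, the error is at most
\begin{align*}
C\,\delta^{\alpha-\frac12}\Big(n^{2\alpha-1}\int_0^{t_n}\big|K(t_n-s)-K(t_n-\underline s)\big|^2\rd s\Big)^{1/2}+o(1)\le C\delta^{\alpha-\frac12}+o(1).
\end{align*}
Similarly replace $K$ by $\cK/\Gamma(\alpha)$; this is allowed because $\cE(u)=I/\Gamma(\alpha)+O(u^{\alpha})$, and the resulting correction is of higher order. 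Letting $n\to\infty$ and then $\delta\to0$, it suffices to treat
\begin{align*}
M^{n}_{t}:=\frac{n^{\alpha-\frac12}}{\Gamma(\alpha)}\,\sigma(X_{t_n})\int_{t_n-\delta}^{t_n}\big((t_n-s)^{\alpha-1}-(t_n-\underline s)^{\alpha-1}\big)\rd W_s .
\end{align*}
One must be careful here: $\sigma(X_{t_n})$ is not adapted, so I would freeze it at $X_{t_n-\delta}$ instead and let $\delta\to0$ at the end. With the substitution $s=t_n-h(y+\cdot)$, the Gaussian integral has variance
\begin{align*}
n^{2\alpha-1}h^{2\alpha-1}\int_0^{\delta/h}\!\!\int_0^1\big((\lceil y\rceil -x)^{\alpha-1}-\lceil y\rceil^{\alpha-1}\big)^2\rd x\,\rd y\to\Gamma(\alpha)^2\kappa_2^2(\alpha),
\end{align*}
where $y$ indexes the subinterval and $x$ is the position inside it. The limit is finite because $\alpha<1$, so the integrand decays like $y^{2\alpha-4}$. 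This produces the constant in \eqref{eq.kappa2}. The normalization $\Gamma(\alpha)^2\kappa_2^2$ appears because the integral in \eqref{eq.kappa2} already carries the factor $1/\Gamma^2(\alpha)$.

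\textbf{Step 3: joint convergence with $\cF$.} Take $0<t^1<\dots<t^r$ and $\delta$ smaller than all gaps $t^{k}-t^{k-1}$. The Gaussian vectors $G^{n,k}_\ell:=\frac{n^{\alpha-\frac12}}{\Gamma(\alpha)}\int_{t^k_n-\delta}^{t^k_n}(\cdots)\rd W^\ell_s$ are then built from increments of $W$ on disjoint windows, so they are independent across $k$ and across $\ell$. Their variances converge to $\kappa_2^2$. Moreover $G^{n,k}$ is asymptotically independent of $\cF$: for any fixed $\cF_u$-measurable bounded $H$, the covariance of $G^{n,k}$ with $W_u$ tends to $0$. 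Indeed, the kernel mass concentrates within $O(h)$ of $t^k_n$, and the total integral of the kernel difference is $O(n^{\alpha-\frac12}h^{\alpha})\to0$. I would formalize this through characteristic functions $\hE[H\exp(i\sum\theta_k\cdot G^{n,k})]$, conditioning on $\cF_{t^k_n-\delta}$ and approximating $H$ by functionals of $W$ off the windows. Combining with Step 2 and $\sigma(X_{t^k_n-\delta})\to\sigma(X_{t^k})$ yields the limit $\kappa_2\sum_{j,\ell}\sigma^j_\ell(X_{t^k})Z^{\cdot,j}_{\ell,t^k}$, with $Z$ independent over times and of $W$, which is $\widetilde\cR$. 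In the componentwise bookkeeping the $i$-th row involves only the diagonal entries $Z^{i,i}$, matching the covariance structure of $Z$.

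\textbf{Main obstacle.} I expect the hardest part to be Step 2's replacement of the matrix kernel $K$ by the scalar kernel $\cK/\Gamma(\alpha)$ uniformly near the singularity, together with keeping the $\delta$-localization errors uniform in $n$. I would first try to control this through a Taylor bound on $E_{\alpha,\alpha}(Au^\alpha)$ combined with the negative definiteness of $A$.
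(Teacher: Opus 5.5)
Your proposal is correct and follows essentially the paper's route. The drift part is negligible by \eqref{eq.K-reg} (Lemma~\ref{lem.replaceR_n1}(i)), the diffusion coefficient is frozen via a $\delta$-splitting (Lemma~\ref{lem.replaceR_n1}(ii)), $\kappa_2^2(\alpha)$ comes from the same change of variables as in Lemma~\ref{lm.cM_limit_Cov}, and independence from $W$ follows from the $O(n^{-1/2})$ covariance with Brownian increments together with Gaussian characteristic functions (Lemma~\ref{lm.R_n1}). Your differences are technical only: you localize to $[\underline{t}-\delta,\underline{t}]$, which makes cross-time independence exact; you freeze at the adapted $X_{\underline{t}-\delta}$, which avoids the paper's use of It\^o's isometry with the non-adapted $\sigma(X_t)$; and you replace $K$ by $\cK I_{d\times d}/\Gamma(\alpha)$ early. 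For the independence step, however, approximating $H$ by functionals of $W$ off fixed-width windows is not dense, so either use the paper's exponential functionals of finitely many $W$-increments plus $L^2$-density, or first shrink the window using the $O(h)$ concentration of the kernel mass.
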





\section{Proofs of main results}
 \label{sec.ProofsMainRes}

This section is devoted to the proofs of Theorems \ref{thm.Yn} and \ref{thm.An}.

\subsection{Proof of Theorem \ref{thm.Yn}}

For the first part $Y^{n}$ of $U^{n}$, it follows from \eqref{eq.SVE} and \eqref{eq.auxiMLE} that for $1 \leq i \leq d$ and $t \in (0,T]$, 
\begin{align}
&\ Y_{t}^{n,i} = n^{\alpha-\frac{1}{2}} (X_{t}^{i} - \widetilde{X}_{t}^{i}) \nonumber \\ 
&= \sum_{j=1}^{d} \left( \int_{0}^{t} K_{j}^{i}(t-s) n^{\alpha-\frac{1}{2}} \big( b^{j}(X_{s}) - b^{j}(\widehat{X}_{\underline{s}}) \big) \rd s + \int_{0}^{t} K_{j}^{i}(t-s) n^{\alpha-\frac{1}{2}} \big( \sigma^{j}(X_{s}) - \sigma^{j}(\widehat{X}_{\underline{s}}) \big) \rd W_{s} \right) \nonumber \\ 
&= \sum_{j=1}^{d} \bigg[ \int_{0}^{t} K_{j}^{i}(t-s) \Big(\nabla b^{j}(\widetilde{X}_{s})^{\top} Y_{s}^{n} \rd s + \sum_{\ell=1}^{m}\nabla \sigma_{\ell}^{j}(\widetilde{X}_{s})^{\top} Y_{s}^{n} \rd W_{s}^{\ell} \Big) \nonumber \\ 
&\quad + \int_{0}^{t} K_{j}^{i}(t-s) n^{\alpha-\frac{1}{2}} \nabla b^{j}(\widetilde{X}_{s})^{\top} (\widetilde{X}_{s}-\widehat{X}_{\underline{s}}) \rd s + \sum_{\ell=1}^{m} \sum_{k=1}^{d} \int_{0}^{t} K_{j}^{i}(t-s) \partial_{k} \sigma_{\ell}^{j}(\widetilde{X}_{s}) \rd \hat{V}_{s}^{n,k,\ell} \bigg] + \hat\Delta_{t}^{n,i}, \label{eq.Y_t^n} 
\end{align} 
where $\widehat{\mathbf{V}}^n = \{ \hat{V}^{n,k,\ell} 
= n^{\alpha-\frac{1}{2}} \int_0^\cdot \widetilde{X}_s^k-\widehat{X}_{\underline{s}}^k \rd W_s^{\ell},\, 1 \leq k \leq d,\, 1 \leq \ell \leq m\}$, and $\widehat{\Delta}^n = (\hat{\Delta}^{n,1}, \ldots, \hat{\Delta}^{n,d})^\top$ is the remainder. To facilitate the proof of Theorem \ref{thm.Yn}, several lemmas are provided below.

\begin{lemma} \label{lm.covV} 
Let $t \in [0, T]$, $(k_1, k_2, k) \in \{1,\ldots,d\}^3$, and $\ell \in \{1,\ldots,m\}$. Then
\begin{align} 
&\< \hat{V}^{n,k_1, \ell}, \hat{V}^{n,k_2, \ell} \>_t
\ \xrightarrow[n \to \infty]{L^1(\Omega)} \ 
\kappa_{1}^{2}(\alpha) \sum_{q=1}^m \int_0^{t} \sigma_{q}^{k_1}(X_s) \sigma_{q}^{k_2}(X_s) \rd s, \label{lm.covV(i)} \\ 
&\< \hat{V}^{n,k,\ell}, W^{\ell} \>_t 
\ \xrightarrow[n \to \infty]{L^1(\Omega)}\ 
0, \label{lm.covV(ii)}
\end{align} 
where $\kappa_{1}^{2}(\alpha)$ is defined in \eqref{eq.kappa1}. 
\end{lemma}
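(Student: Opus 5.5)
The plan is to compute both brackets explicitly and reduce them to Riemann-sum/ergodic-type averages of a deterministic kernel quantity. By definition of $\hat V^{n,k,\ell}$,
\[
\< \hat V^{n,k_1,\ell},\hat V^{n,k_2,\ell}\>_t = n^{2\alpha-1}\int_0^t (\widetilde X^{k_1}_s-\widehat X^{k_1}_{\underline s})(\widetilde X^{k_2}_s-\widehat X^{k_2}_{\underline s})\,\rd s,
\qquad
\< \hat V^{n,k,\ell},W^\ell\>_t = n^{\alpha-\frac12}\int_0^t (\widetilde X^{k}_s-\widehat X^{k}_{\underline s})\,\rd s .
\]
Using \eqref{eq.MLE} and \eqref{eq.auxiMLE}, which share the integrand $\sigma(\widehat X_{\underline u})$, I would write the error in one formula:
\[
\widetilde X_s-\widehat X_{\underline s}= \big(E_\alpha(As^\alpha)-E_\alpha(A\underline s^\alpha)\big)X_0+\int_0^s \Delta_s(u)\, b(\widehat X_{\underline u})\rd u+\int_0^s \Delta_s(u)\,\sigma(\widehat X_{\underline u})\rd W_u,
\]
with $\Delta_s(u)=K(s-u)-K(\underline s-\underline u)\mathbf 1_{u<\underline s}$. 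The first two terms are of higher order after multiplying by $n^{\alpha-\frac12}$ and integrating in $s$. For the first term, $|E_\alpha(As^\alpha)-E_\alpha(A\underline s^\alpha)|\le Ch\,\underline s^{\alpha-1}$ away from $0$ by the appendix estimates, and it is bounded on the first cell. For the second term, $\int_0^s|\Delta_s(u)|\rd u\le Ch^{\alpha}$. Their contribution to both brackets therefore tends to zero in $L^1(\Omega)$.

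Next, in the stochastic term, I would replace $K(u)$ by $\cK(u)/\Gamma(\alpha)$, using $\cE(u)=E_{\alpha,\alpha}(Au^\alpha)=\Gamma(\alpha)^{-1}+O(u^\alpha)$ and the Lipschitz property of $\cE$ from the appendix. I would also freeze $\sigma(\widehat X_{\underline u})$ at $\sigma(X_s)$, using \eqref{X-hatX_Lp}. The key quantitative fact is that
\[
n^{2\alpha-1}\int_0^s|\Delta_s(u)|^2\rd u\le C
\]
uniformly, while the mass of this integral coming from $u<s-Mh$ is at most $C M^{2\alpha-3}$, which can be made small by taking $M$ large. Hence only $u\in[s-Mh,s]$ matters. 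On that window the freezing error is $O((Mh)^{\alpha-\frac12}+h^{\alpha-\frac12})$ in $L^p$, and the kernel replacement error is small because the window shrinks.

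After these reductions, the first bracket becomes
\[
\sum_q \int_0^t \sigma^{k_1}_q(X_s)\sigma^{k_2}_q(X_s)\, n^{2\alpha-1}\,\Big|\int_0^s \tilde\Delta_s(u)\rd W^{q}_u\Big|^2\rd s \;+\;\text{(cross terms } q\ne q'),
\]
where $\tilde\Delta$ is the scalar power-kernel difference divided by $\Gamma(\alpha)$. I would handle this by a blocking argument. Fix $\delta>0$, partition $[0,t]$ into blocks of length $\delta$, and on each block freeze $\sigma(X_s)$ at the block's left endpoint (error $\to0$ as $\delta\to0$ by \eqref{eq.solutionReqularity}). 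Then show that
\[
n^{2\alpha-1}\int_{\text{block}}\Big(\big|\textstyle\int_0^s\tilde\Delta_s\rd W^q\big|^2-\int_0^s\tilde\Delta_s^2\rd u\Big)\rd s\to0 \quad\text{in } L^2,
\]
and similarly that the cross terms $q\ne q'$ vanish. This follows because the covariance in $(s,s')$ of these squared Gaussian quantities decays once $|s-s'|\gg h$, being controlled by $\big(n^{2\alpha-1}\int\tilde\Delta_s\tilde\Delta_{s'}\big)^2$.

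The remaining deterministic computation uses the substitution $s=\underline s+xh$, $u=s-hy$. It gives
\[
n^{2\alpha-1}\int_0^s\tilde\Delta_s^2\rd u\approx \Gamma(\alpha)^{-2}\Big(\int_0^\infty\big((y+x)^{\alpha-1}-\lceil y\rceil^{\alpha-1}\big)^2\rd y+\tfrac{x^{2\alpha-1}}{2\alpha-1}\Big),
\]
where the second term comes from $u\in(\underline s,s)$. Averaging over $x\in(0,1)$ (a Riemann-sum/equidistribution argument) produces $\kappa_1^2(\alpha)$, since $\int_0^1 x^{2\alpha-1}/(2\alpha-1)\,\rd x=1/(2\alpha(2\alpha-1))$. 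This yields \eqref{lm.covV(i)}. I expect this blocking/decorrelation step, together with the uniform control near $s=0$ and the tail in $y$, to be the main obstacle.

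For \eqref{lm.covV(ii)}, after the same reductions, I would apply the stochastic Fubini theorem:
\[
n^{\alpha-\frac12}\int_0^t\!\!\int_0^s\Delta_s(u)\sigma(\widehat X_{\underline u})\rd W_u\rd s=\int_0^t\Big(n^{\alpha-\frac12}\int_u^t\Delta_s(u)\rd s\Big)\sigma(\widehat X_{\underline u})\rd W_u .
\]
By the It\^o isometry and \eqref{eq.momentBounded}, it then suffices to show that $n^{2\alpha-1}\int_0^t\big(\int_u^t\Delta_s(u)\rd s\big)^2\rd u\to0$. Averaging in $s$ cancels the oscillation: the contribution of $s\in[u,u+2h]$ is $O(h^{\alpha})$, and for larger $s$ one has $|\Delta_s(u)|\le Ch(s-u)^{\alpha-2}$, so the total is $O(h^\alpha)$. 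The expression is therefore $O(n^{2\alpha-1}h^{2\alpha})=O(h)\to0$.
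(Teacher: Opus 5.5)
Your proposal is correct, but it reaches the result by a different route from the paper. The common parts are: the split of $\widetilde X_s-\widehat X_{\underline s}$ into initial-value, drift and stochastic parts; the decorrelation estimate for $n^{2\alpha-1}\int \Delta_s\Delta_{s'}\,\rd u$ with $s\neq s'$ (Lemma \ref{lm.cB_n}); and the change of variables followed by equidistribution of $ns-\lfloor ns\rfloor$ (the paper cites \cite[Lemma C.2]{FukasawaUgai2023}), which produces $\kappa_1^2(\alpha)$.

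\textbf{Part (i).} The paper keeps the adapted integrand $\sigma(\widehat X_{\underline u})$ inside the stochastic integrals. It splits $\psi_{n,2}$ at $\underline s$ and applies It\^o's product rule to each product.
\begin{itemize}
\item The bracket part is then an ordinary $\rd u\,\rd s$ integral, and there $\sigma(\widehat X_{\underline u})$ is replaced by $\sigma(X_s)$ in $L^2$.
\item The iterated-integral and cross parts are removed by conditioning on $\cF_{\underline s}$ or $\cF_{\underline v}$, using moment bounds and Lemma \ref{lm.cB_n}.
\end{itemize}
You instead localize to the window $[s-Mh,s]$, replace $K$ by $\cK/\Gamma(\alpha)$, and freeze $\sigma$ on $\delta$-blocks. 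This reduces the fluctuation to a Gaussian one, whose variance is exactly $2\iint (n^{2\alpha-1}\mathrm{Cov}(G_s,G_{s'}))^2\,\rd s\,\rd s'$. That computation is cleaner, and it drops the matrix-index bookkeeping early. The costs are an iterated limit in $n$, $M$ and $\delta$, and some care with adaptedness.

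Two small points to fix:
\begin{itemize}
\item You cannot literally place $\sigma(X_s)$ inside $\int_0^s\cdots\,\rd W_u$, since that integrand is anticipating. Freeze at an adapted time, such as $s-Mh$ or the block's left endpoint, and then pull the coefficient out as a product.
\item $\cE$ is only $\alpha$-H\"older, not Lipschitz. However, the bound $|\cE(u)-\Gamma(\alpha)^{-1}|\le Cu^\alpha$ is all your replacement step uses.
\end{itemize}

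\textbf{Part (ii).} Your stochastic-Fubini argument for \eqref{lm.covV(ii)} is genuinely simpler than the paper's. The paper expands $\hE[(\widetilde X^k_s-\widehat X^k_{\underline s})(\widetilde X^k_v-\widehat X^k_{\underline v})]$ into four terms and proves pointwise decay of each, again via Lemma \ref{lm.cB_n}. You use the bound $\int_u^t|\Delta_s(u)|\,\rd s\le Ch^\alpha$, which follows from $|K'(r)|\le Cr^{\alpha-2}$ on $(0,T]$. Combined with the It\^o isometry, this gives an explicit $O(n^{-1/2})$ rate in $L^2$ with no decorrelation lemma. No cancellation in $s$ is actually needed.
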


The proof of Lemma \ref{lm.covV} is lengthy and technical, and is therefore postponed to Section \ref{sec.asym_behav_Y_n} to avoid interrupting the main exposition. Note that $\< \hat{V}^{n,k_1, \ell}, \hat{V}^{n,k_2, q} \> = 0$ and $\< \hat{V}^{n,k, \ell}, W^q \> = 0$ when $\ell \neq q$. Then, Lemma \ref{lm.covV} and the theory of Jacod \cite{Jacod1997} allow us to specify the limiting distribution of $\widehat{\mathbf{V}}^n$ in the following lemma.

\begin{lemma} \label{lem.vklB}
As $n$ goes to infinity, the process $\widehat{\mathbf{V}}^{n}$ converges stably in law in $\mathcal{C}_{0}$ to a continuous process $\widehat{\mathbf{V}} = \{\hat{V}^{k, \ell}\}$ with 
\begin{align*}
\hat{V}^{k, \ell} = \kappa_{1}(\alpha) \sum_{q=1}^m \int_0^\cdot \sigma_q^k(X_s) \rd B_s^{q,\ell}, \qquad 1 \leq k \leq d, \quad 1 \leq \ell \leq m. 
\end{align*}
Here, $B$ is an $m^2$-dimensional standard Brownian motion independent of \(\cF\) and defined on some extension of $(\Omega, \cF, \hP)$, and $\kappa_{1}(\alpha)$ is defined in \eqref{eq.kappa1}. 
\end{lemma}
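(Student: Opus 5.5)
We will apply Jacod's theorem on stable convergence of continuous conditionally Gaussian martingales (\cite{Jacod1997}, Theorem 2.1 there, in the continuous setting). Each $\hat V^{n,k,\ell}=n^{\alpha-\frac12}\int_0^\cdot(\widetilde X^k_s-\widehat X^k_{\underline s})\rd W^\ell_s$ is a continuous $\{\cF_t\}$-local martingale; in fact it is a true square-integrable martingale by \eqref{X-hatX_Lp}. Jacod's criterion then has three parts. First, the bracket processes $\<\hat V^{n,k_1,\ell_1},\hat V^{n,k_2,\ell_2}\>_t$ must converge in probability, for each $t$, to a continuous adapted limit $C^{(k_1,\ell_1),(k_2,\ell_2)}_t$. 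Second, $\<\hat V^{n,k,\ell},W^q\>_t\to0$ in probability for every $t$. Third, the brackets with every bounded martingale orthogonal to $W$ must tend to $0$; this is automatic here.

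The brackets are identified as follows. For $\ell_1\neq\ell_2$ the bracket vanishes identically, as noted before the lemma. For $\ell_1=\ell_2=\ell$, Lemma \ref{lm.covV} gives $L^1$ (hence in-probability) convergence to $\kappa_1^2(\alpha)\sum_q\int_0^t\sigma^{k_1}_q\sigma^{k_2}_q(X_s)\rd s$. The cross-bracket with $W^\ell$ tends to $0$ by \eqref{lm.covV(ii)}, and the cross-bracket with $W^q$, $q\neq\ell$, is zero. Hence the limiting covariance is
\[
C^{(k_1,\ell_1),(k_2,\ell_2)}_t=\delta_{\ell_1\ell_2}\,\kappa_1^2(\alpha)\sum_{q}\int_0^t\sigma^{k_1}_q\sigma^{k_2}_q(X_s)\rd s .
\]

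For the orthogonality condition, $(\Omega,\cF,\{\cF_t\},\hP)$ is assumed to be the Brownian filtration, enlarged by $X_0$. By martingale representation, any bounded martingale $N$ orthogonal to $W$ is constant, so $\<\hat V^n,N\>=0$. If the filtration is more general, one still has $\<\hat V^n,N\>=n^{\alpha-\frac12}\int(\widetilde X^k-\widehat X^k_{\underline\cdot})\rd\<W^\ell,N\>=0$ directly, since $\<W^\ell,N\>=0$. Jacod's theorem therefore yields stable convergence in $\cC_0$ of $\widehat{\mathbf V}^n$ to a continuous process that, conditionally on $\cF$, is a centered Gaussian martingale with covariance $C$ and is independent of $W$.

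It remains to represent the limit. On an extension, let $B=\{B^{q,\ell}\}$ be an $m^2$-dimensional Brownian motion independent of $\cF$, and set $\hat V^{k,\ell}=\kappa_1\sum_q\int_0^\cdot\sigma^k_q(X_s)\rd B^{q,\ell}_s$. Conditionally on $\cF$, this is a centered Gaussian martingale. Its bracket is $\<\hat V^{k_1,\ell_1},\hat V^{k_2,\ell_2}\>=\kappa_1^2\sum_{q_1,q_2}\int\sigma^{k_1}_{q_1}\sigma^{k_2}_{q_2}\,\rd\<B^{q_1,\ell_1},B^{q_2,\ell_2}\>$, which equals $C$ because $\<B^{q_1,\ell_1},B^{q_2,\ell_2}\>_t=\delta_{q_1q_2}\delta_{\ell_1\ell_2}t$. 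Since conditional laws of Gaussian martingales are determined by their brackets, the two limits have the same $\cF$-conditional law, which proves the lemma.

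The genuine difficulty, the bracket asymptotics, is already contained in Lemma \ref{lm.covV}. The step requiring care here is verifying that Jacod's hypotheses hold as stated: convergence in probability for each fixed $t$, continuity of the limit, and the orthogonality condition. We would also check tightness in $\cC_0$, which follows from the bracket convergence together with the uniform moment bounds of Proposition \ref{pro.X}.
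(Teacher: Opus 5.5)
Your proposal is correct and follows the paper's route: apply Jacod's theorem for conditionally Gaussian martingales, using the bracket limits from Lemma~\ref{lm.covV} and the vanishing of all brackets between components with different Brownian indices $\ell\neq q$. You also spell out the orthogonality condition and check that the representation via the extra Brownian motion $B$ has the right covariance, two points the paper leaves implicit; the separate tightness check is harmless but unnecessary, since Jacod's theorem already gives functional stable convergence.
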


With Proposition \ref{pro.X} and Appendix \ref{appen:ML function} at hand, the following lemma can be established as an extension of \cite[Lemmas 2.4--2.8]{FukasawaUgai2023}. For conciseness, its proof is omitted.

\begin{lemma} \label{lm.group}
The following statements hold. 
\begin{itemize}
\item[(i)] For any $(i, j) \in \{1, \dots, d\}^2$ and $\epsilon \in (0, \alpha - \frac{1}{2})$, 
\begin{align*}
\int_0^t K^{i}_{j}(t-s) n^{\alpha-\frac{1}{2}} \nabla b^j(\widetilde{X}_{s})^\top (\widetilde{X}_s - \widehat{X}_{\underline{s}}) \rd s 
\ \xrightarrow[n \to \infty]{\hP}\ 
0 \qquad \mbox{in}~~\mathcal{C}^{\alpha-\frac{1}{2}-\epsilon}_0.
\end{align*}
\vskip 0.3em

\item[(ii)] As $n$ goes to infinity, $\|\widehat{\Delta}^n\|_{C_0^\gamma}$ tends to zero in $L^p(\Omega)$ for all $\gamma \in (0, \alpha-\frac{1}{2})$ and $p\geq1$.
\vskip 0.3em

\item[(iii)] If the sequence 
\begin{align*}
 \big( Y^n, \, \widehat{\mathbf{V}}^{n}, \, \{\nabla b^j(\widetilde{X})\}_j, \, \{\partial_k \sigma^j_{\ell}(\widetilde{X})\}_{j \ell k} \big) 
\end{align*}
converges in law in $\mathcal{C}_0^{\alpha - \frac{1}{2} - \epsilon} \times \mathcal{C}_0 \times \mathcal{D}_{d^2} \times \mathcal{D}_{d^2 m}$ to
\begin{align*}
 \big( Y, \, \widehat{\mathbf{V}}, \, \{\nabla b^j(X)\}_j, \, \{\partial_k \sigma^j_{\ell}(X)\}_{j \ell k} \big) \qquad \text{as}~~ n \rightarrow \infty, 
\end{align*}
then $Y$ is a solution of \eqref{eq.Y_t^i}.
\vskip 0.3em

\item[(iv)] The sequence $Y^n$ is tight in $\mathcal{C}_0^{\alpha - \frac{1}{2} -\epsilon}$ for all $\epsilon \in (0, \alpha - \frac{1}{2})$.
\vskip 0.3em

\item[(v)] The uniqueness in law holds for the solution of Eq.\ \eqref{eq.Y_t^i}.
\end{itemize}
\end{lemma}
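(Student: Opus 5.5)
The statement is an extension of \cite[Lemmas 2.4--2.8]{FukasawaUgai2023} from the scalar kernel $\cK(u)=u^{\alpha-1}/\Gamma(\alpha)$ to the matrix kernel $K=\cK\cE$. The plan is to reproduce those arguments, using only three properties of $K$, all of which I expect to follow from the Appendix because $A$ is negative definite:
\begin{itemize}
\item the pointwise bound $|K(u)|\le C u^{\alpha-1}$;
\item the integral increment bound $\int_0^\infty |K(u+\delta)-K(u)|^2\rd u \le C\delta^{2\alpha-1}$;
\item the corresponding $L^1$-type bound $\int_0^\infty|K(u+\delta)-K(u)|\rd u\le C\delta^{\alpha}$.
\end{itemize}
The increment bounds come from writing $K(u+\delta)-K(u)=(\cK(u+\delta)-\cK(u))\cE(u+\delta)+\cK(u)(\cE(u+\delta)-\cE(u))$. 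Here $\cE$ is bounded, and $|\cE'(u)|\le Cu^{\alpha-1}$. With these in hand, every Kolmogorov/Garsia--Rodemich--Rumsey argument in $\cC_0^{\gamma}$ goes through entrywise, with $d\times d$ sums that only affect constants.

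\textbf{(i) and (ii).} For (i), I would split $\widetilde X_s-\widehat X_{\underline s}=(\widetilde X_s-\widetilde X_{\underline s})+(\widetilde X_{\underline s}-\widehat X_{\underline s})$.
\begin{itemize}
\item For the first piece, the stochastic-integral part of $\widetilde X_s-\widetilde X_{\underline s}$ is a martingale-like increment over $[\underline s,s]$. After freezing $\nabla b^j(\widetilde X_{\underline s})$, a stochastic Fubini argument and the BDG inequality give an $L^p$ bound of order $h^{\alpha-\frac12}\cdot h^{1/2}$ on increments. This is smaller than the crude $h^{\alpha-\frac12}$ by a factor that beats $n^{\alpha-\frac12}$. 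The drift part is of order $h$, which is harmless.
\item The second piece, $\widetilde X_{\underline s}-\widehat X_{\underline s}$, is exactly the kernel discretization error. It appears inside a Lebesgue $\rd s$-integral against a smooth-in-time weight, so I would again use the $\rd W$ structure plus Fubini to gain a factor $h^{1/2}$.
\end{itemize}
I then need $L^p$ bounds on the increments $|t-t'|^{\gamma}$ of the resulting convolution. The GRR lemma from \cite{RichardTanYang2021} then yields convergence in probability in $\cC_0^{\alpha-\frac12-\epsilon}$.

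For (ii), $\widehat\Delta^n$ collects the Taylor remainders $\int_0^1(\nabla b(\theta X+(1-\theta)\widehat X_{\underline s})-\nabla b(\widetilde X_s))\rd\theta$ multiplied by $n^{\alpha-\frac12}(X_s-\widehat X_{\underline s})$, together with the analogous $\sigma$ terms. The steps are:
\begin{itemize}
\item The prefactor $n^{\alpha-\frac12}(X_s-\widehat X_{\underline s})$ is bounded in every $L^p$ by Proposition \ref{pro.X} and \eqref{X-hatX_Lp}.
\item The derivative difference tends to $0$ in $L^p$, by uniform continuity and boundedness of the derivatives together with \eqref{X-hatX_Lp}.
\item BDG and the increment bounds on $K$ then give $\hE|\widehat\Delta^n_t-\widehat\Delta^n_{t'}|^p\le o(1)|t-t'|^{p(\alpha-\frac12)}$.
\item GRR gives the claim in $\cC_0^{\gamma}$.
\end{itemize}

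\textbf{(iv) and (iii).} For tightness (iv), I would first prove $\sup_n\sup_t\|Y^n_t\|_{L^p}<\infty$ from \eqref{eq.strongConvergence}. I would then apply BDG to \eqref{eq.Y_t^n} to get $\hE|Y^n_t-Y^n_{t'}|^p\le C|t-t'|^{p(\alpha-\frac12)}$. The key estimate here is that $\hE|\hat V^n|$-type quadratic variations are bounded uniformly, which follows from Lemma \ref{lm.covV} or directly from \eqref{X-hatX_Lp}. The Kolmogorov/GRR criterion together with compact embedding of Hölder spaces then gives tightness in $\cC_0^{\alpha-\frac12-\epsilon}$.

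For (iii), I would pass to the limit in \eqref{eq.Y_t^n} by invoking the Skorokhod representation. The terms handled by (i) and (ii) vanish. The stochastic integrals against $W$ and $\hat V^n$ converge by Kurtz--Protter-type convergence of stochastic integrals, since the integrators are good (uniformly controlled) semimartingales. The Volterra kernel is handled by writing $\int_0^tK(t-s)H_s\rd M_s$, with the singularity truncated at $s\in[t-\delta,t]$. Away from the singularity, $K(t-\cdot)$ is continuous and of bounded variation, so integration by parts reduces the term to a continuous functional. Near the singularity, the contribution is $O(\delta^{\alpha-\frac12})$ uniformly in $n$.

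\textbf{(v) and the main obstacle.} For (v), equation \eqref{eq.Y_t^i} is linear with bounded coefficients given $(X,W,B)$. Pathwise uniqueness follows from a Gronwall-type inequality for $\hE|Y_t-Y'_t|^2\le C\int_0^t(t-s)^{2\alpha-2}\hE|Y_s-Y'_s|^2\rd s$, using the fractional Gronwall lemma. Yamada--Watanabe then gives uniqueness in law. I expect the main obstacle to be (i), specifically extracting the extra $h^{1/2}$ gain for the discretization piece $\widetilde X_{\underline s}-\widehat X_{\underline s}$ when the kernel is a non-commuting matrix function. There I would first try expanding both processes as stochastic integrals and applying stochastic Fubini to rewrite the term as $\int_0^t\big(\int_r^t K(t-s)\nabla b(\cdot)[K(s-r)-K(\underline s-\underline r)]\rd s\big)\sigma\rd W_r$, and then bounding the inner integral by the increment bounds above.
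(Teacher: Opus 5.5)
Your overall plan, transplanting Fukasawa--Ugai's Lemmas 2.4--2.8 entrywise using only the kernel bounds of Appendix~\ref{appen:ML function}, is what the paper intends; it omits the proof with exactly this remark. Your sketches of (ii)--(v) are sound. Your argument for (i), however, fails as written.

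The problem is the $\cF_{\underline s}$-measurable part of $\widetilde X_s-\widehat X_{\underline s}$.

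First, $\widetilde X_s-\widetilde X_{\underline s}$ is not a martingale increment over $[\underline s,s]$. Besides $\int_{\underline s}^sK(s-u)\sigma(\widehat X_{\underline u})\,\rd W_u$, it contains the memory term $\int_0^{\underline s}\big(K(s-u)-K(\underline s-u)\big)\sigma(\widehat X_{\underline u})\,\rd W_u$. This term is $\cF_{\underline s}$-measurable and of the same order $h^{\alpha-\frac12}$. Together with $\widetilde X_{\underline s}-\widehat X_{\underline s}$ it forms the first part of $\psi_{n,2}$ in Lemma~\ref{lm.main}, which is the source of the $\int_0^1\!\int_0^\infty$ part of $\kappa_1^2$. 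Freezing $\nabla b^j$ at $\underline s$ gives you nothing for these terms.

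Second, for these terms your stochastic Fubini identity $\int_0^t\big(\int_r^tK(t-s)\nabla b^j(\widetilde X_s)[\cdots]\,\rd s\big)\sigma\,\rd W_r$ has an integrand depending on $\widetilde X_s$ for $s>r$. It is anticipating, so BDG and It\^o's isometry cannot be applied. The weight $K^i_j(t-s)\nabla b^j(\widetilde X_s)$ is neither smooth nor adapted; the non-commutativity of the matrix kernel is not the issue.

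These terms are also not small pointwise: $n^{\alpha-\frac12}(\widetilde X_{\underline s}-\widehat X_{\underline s})=R^n_{\underline s}$ has a non-trivial limit by Theorem~\ref{thm.An}. They vanish only after the $\rd s$-integration, through asymptotic decorrelation at distinct times. Your proposal has no mechanism that delivers this in the presence of a random weight.

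The missing step is to make the weight deterministic.
\begin{enumerate}
\item Replace $\nabla b^j(\widetilde X_s)$ by $\nabla b^j(\widetilde X_{\lfloor s/\delta\rfloor\delta})$ with $\delta$ independent of $n$. Since $\nabla b$ is bounded and uniformly continuous, \eqref{eq.solutionReqularity} and $\sup_{n,s}\|n^{\alpha-\frac12}(\widetilde X_s-\widehat X_{\underline s})\|_{L^2}<\infty$ (from \eqref{X-hatX_Lp}) make the replacement error small in $L^1$, uniformly in $n$, as $\delta\to0$.
\item On each of the finitely many blocks $I$ of the $\delta$-grid, the frozen weight is a bounded random factor that can be pulled out of the $\rd s$-integral. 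This leaves $\int_{I\cap[0,t]}K^i_j(t-s)\,n^{\alpha-\frac12}(\widetilde X^k_s-\widehat X^k_{\underline s})\,\rd s$ with a deterministic weight.
\item The second moment of this term is a double integral of $K^i_j(t-s)K^i_j(t-v)$ against $n^{2\alpha-1}\hE[(\widetilde X^k_s-\widehat X^k_{\underline s})(\widetilde X^k_v-\widehat X^k_{\underline v})]$. That covariance is bounded and tends to $0$ for $v<\underline s$, by exactly the computation in the proof of \eqref{lm.covV(ii)} built on Lemma~\ref{lm.cB_n}. Since $|K(t-s)||K(t-v)|$ is integrable, dominated convergence concludes. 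With a deterministic weight, your Fubini-plus-isometry route also becomes legitimate.
\item Let $n\to\infty$, then $\delta\to0$.
\end{enumerate}

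To obtain convergence in $\cC_0^{\alpha-\frac12-\epsilon}$, you do not need a small factor on increments. The term has the form $\int_0^tK^i_j(t-s)H^n_s\,\rd s$ with $\sup_{n,s}\|H^n_s\|_{L^p}<\infty$. The bound $\int_{t'}^t|K(t-s)|\,\rd s+\int_0^{t'}|K(t-s)-K(t'-s)|\,\rd s\le C|t-t'|^{\alpha}$ then gives tightness in a strictly better H\"older space. Combined with pointwise convergence to $0$, this yields convergence in probability.
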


With Lemmas \ref{lm.covV}--\ref{lm.group} established and drawing on the approach of \cite{FukasawaUgai2023}, we now proceed with the proof of Theorem \ref{thm.Yn}.

\vskip 0.5em
\textit{Proof of Theorem \ref{thm.Yn}}. 
In view of \eqref{X-tildeX_LpC}, $\widetilde{X} \xrightarrow[n \to \infty]{\hP} X$ in the uniform topology. Then it follows from the continuous mapping theorem that 
\begin{align*}
\big( \{\nabla b^j(\widetilde{X})\}_j,\, \{\partial_k \sigma_{\ell}^j(\widetilde{X})\}_{j\ell k} \big) 
\ \xrightarrow[n \to \infty]{\hP}\ 
\big( \{\nabla b^j(X)\}_j,\, \{\partial_k \sigma_{\ell}^j(X)\}_{j\ell k} \big)
\end{align*}
in the uniform topology as well. Recalling Lemmas \ref{lem.vklB} and \ref{lm.group}(iv) concludes that
\begin{align*}
\big(Y^n,\, \widehat{\mathbf{V}}^{n},\, \{\nabla b^j(\widetilde{X})\}_j,\, \{\partial_k \sigma_{\ell}^j(\widetilde{X})\}_{j\ell k},\, H \big)
\end{align*} 
is tight in $C_0^{\alpha-\frac12-\epsilon} \times C_0 \times D_{d^2} \times D_{d^2 m} \times \hR$ for any random variable $H$ on $(\Omega, \cF, \hP)$. By Prokhorov's theorem (see e.g., \cite[Theorem 13.29]{Klenke2020}), any subsequence of this tight sequence admits a further subsequence that converges. Moreover, Lemma \ref{lm.group}(iii) and (v) show the uniqueness of the limit. Therefore the original sequence itself has to converge. Based on Lemma \ref{lm.group}(v), the limit $Y$ of $Y^n$ is characterized by \eqref{eq.Y_t^i}. Finally, the convergence of $Y^n$ is stable because $H$ is arbitrary. The proof is completed. 
\hfill$\Box$

\subsection{Proof of Theorem \ref{thm.An}}

For the second part $R^{n}$ of $U^{n}$, it follows from \eqref{eq.MLE} and \eqref{eq.auxiMLE} that for any $t \in [0,T]$, 
\begin{align*} 
R_t^n = n^{\alpha - \frac{1}{2}} \int_0^t \big( K(t-s) - K(t-\underline{s}) \big) b(\widehat{X}_{\underline{s}}) \rd s + n^{\alpha - \frac{1}{2}} \int_0^t \big( K(t-s) - K(t-\underline{s}) \big) \sigma(\widehat{X}_{\underline{s}}) \rd W_{s}. 
\end{align*} 
For $t \in [0, T ]$, define 
\begin{align}
& C_t^{n} 
= n^{\alpha - \frac{1}{2}} \int_0^t \big( K(t-s) - K(t-\underline{s}) \big) b(\widehat{X}_{\underline{s}}) \rd s, \label{eq.def_C_tn} \\ 
& \widehat{R}_{t}^{n} 
= n^{\alpha - \frac{1}{2}} \int_0^t \big( K(t-s) - K(t-\underline{s}) \big) \sigma(\widehat{X}_{\underline{s}}) \rd W_s, \label{eq.def_R_tn1} \\ 
& \widetilde{R}_{t}^{n} 
= n^{\alpha - \frac{1}{2}} \int_0^t \big( K(t-s) - K(t-\underline{s}) \big) \sigma(X_t) \rd W_s \label{eq.def_tildR_tn1}.
\end{align}

To facilitate the proof of Theorem \ref{thm.An}, we prepare Lemmas \ref{lem.replaceR_n1} and \ref{lm.R_n1}, whose proofs are placed in Section \ref{sec.asym_behav_R_n}. The following Lemma \ref{lem.replaceR_n1}(ii) shows that $\widehat{R}_{t}^{n}$ exhibits the same asymptotic behavior as $\widetilde{R}_{t}^{n}$ in $L^2(\Omega; \hR^d)$ for all $t \in [0,T]$.

\begin{lemma} \label{lem.replaceR_n1}
We have the following two statements:\ 
\begin{itemize}
\item[(i)] $C_t^{n} \xrightarrow[n \to \infty]{L^2(\Omega;\, \hR^d)} 0$ for any $t \in [0, T]$. 
\item[(ii)] $\lim_{n \to \infty} \sup_{t \in [0, T]} \hE \big[ \big |\widehat{R}_{t}^{n} - \widetilde{R}_{t}^{n}\big|^2 \big] = 0$. 
\end{itemize}
\end{lemma}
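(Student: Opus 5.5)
Both statements reduce to estimating the kernel-difference $K(t-s)-K(t-\underline{s})$ in $L^1$ and $L^2$ over $[0,t]$, and then controlling the coefficient terms with Proposition \ref{pro.X}.

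\textbf{Kernel bounds.} Write $K=\cK\,\cE$ and split
\[
K(t-s)-K(t-\underline{s})=\big(\cK(t-s)-\cK(t-\underline{s})\big)\cE(t-s)+\cK(t-\underline{s})\big(\cE(t-s)-\cE(t-\underline{s})\big).
\]
From the Appendix we use two facts: $\cE$ is bounded on $[0,T]$, and $\cE$ is Lipschitz there, since $E_{\alpha,\alpha}(Au^\alpha)$ is smooth in $u^\alpha$, giving $|\cE(u)-\cE(v)|\le C|u^\alpha-v^\alpha|\le C|u-v|^\alpha$.

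For the first piece, on the last interval $s\in[\underline{t},t]$ we bound crudely by $(t-s)^{\alpha-1}+(t-\underline{t})^{\alpha-1}$. On the earlier intervals we use the mean value theorem, which gives a bound of order $h(t-s)^{\alpha-2}$. Summing yields
\[
\int_0^t|K(t-s)-K(t-\underline{s})|\,\rd s\le Ch^{\alpha}\big(1+\log(1/h)\big),
\]
or simply $\le Ch^{\alpha}|\log h|$. The same splitting gives
\[
\int_0^t|K(t-s)-K(t-\underline{s})|^2\,\rd s\le Ch^{2\alpha-1},
\]
uniformly in $t$. These are the scalings consistent with $\kappa_2$.

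\textbf{Part (i).} By Minkowski's inequality and \eqref{eq.momentBounded}, together with the linear growth of $b$,
\[
\|C^n_t\|_{L^2}\le n^{\alpha-\frac12}\int_0^t|K(t-s)-K(t-\underline{s})|\,\|b(\widehat X_{\underline{s}})\|_{L^2}\,\rd s\le Cn^{\alpha-\frac12}h^{\alpha}|\log h|=Cn^{-\frac12}\log n\to0.
\]

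\textbf{Part (ii).} The difference $\widehat R^n_t-\widetilde R^n_t$ has integrand $\big(K(t-s)-K(t-\underline{s})\big)\big(\sigma(\widehat X_{\underline s})-\sigma(X_t)\big)$. The subtlety is that $\sigma(X_t)$ is not adapted on $[0,t]$, so Itô's isometry cannot be applied directly. We handle this by inserting a lag. Fix $\delta\in(0,t)$ and split $\sigma(\widehat X_{\underline s})-\sigma(X_t)$ as
\[
\big[\sigma(\widehat X_{\underline s})-\sigma(X_{(t-\delta)\vee0})\big]+\big[\sigma(X_{(t-\delta)\vee 0})-\sigma(X_t)\big].
\]

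\emph{Second bracket.} This term is constant in $s$, so it factors out as the random matrix
\[
\big(\sigma(X_{t-\delta})-\sigma(X_t)\big)\, n^{\alpha-\frac12}\int_0^t\big(K(t-s)-K(t-\underline s)\big)\,\rd W_s .
\]
The stochastic integral is Gaussian with $L^4$ norm at most $C$, by the $L^2$ kernel bound. By Cauchy--Schwarz, \eqref{eq.lips} and \eqref{eq.solutionReqularity}, the contribution is at most $C\delta^{\alpha-\frac12}$.

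\emph{First bracket.} Split $[0,t]$ into $[0,t-\delta]$ and $[t-\delta,t]$.
\begin{itemize}
\item On $[t-\delta,t]$ the integrand is still not adapted. We factor the $\mathcal F_{t-\delta}$-measurable matrix $\sigma(X_{t-\delta})$ out of the integral (legitimate for integrals starting at $t-\delta$) and use Itô's isometry for the $\sigma(\widehat X_{\underline s})$ part. Together with \eqref{X-hatX_Lp} this gives a bound of order $C(\delta^{\alpha-\frac12}+h^{\alpha-\frac12})$, multiplied by the normalized $L^2$ kernel norm, which is bounded.
\item On $[0,t-\delta]$ there is no singularity: $|K(t-s)-K(t-\underline s)|\le Ch\,\delta^{\alpha-2}$. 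Hence this piece is $O(n^{\alpha-\frac12}h\,\delta^{\alpha-2})\to0$ for fixed $\delta$.
\end{itemize}

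Letting $n\to\infty$ and then $\delta\to0$ proves (ii). All constants are uniform in $t$; the case $t<\delta$ is trivial since only the local estimates are needed.

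The main obstacle is this non-adaptedness of $\sigma(X_t)$ in (ii), which the $\delta$-localization is designed to resolve.
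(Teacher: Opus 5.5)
Your proposal is correct and follows the paper's route. Part (i) is Minkowski plus the $L^1$ kernel bound. Part (ii) is the same $\delta$-localization through $\sigma(X_{(t-\delta)_+})$, using It\^o's isometry, \eqref{X-hatX_Lp}, \eqref{eq.solutionReqularity} and the $L^2$ bound in \eqref{eq.K-reg}, then $n\to\infty$ and $\delta\to0$.

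The one substantive difference is how you treat the non-adapted factor. The paper applies It\^o's isometry directly to integrands containing $\sigma(X_t)$ (its terms $(\mathrm{I}_n)$ and $(\mathrm{III}_n)$), which is not literally justified since $\sigma(X_t)$ is not adapted on $[0,t]$. You instead pull the random coefficient out of a deterministic-kernel Gaussian integral and use Cauchy--Schwarz with Gaussian $L^4$ bounds, which gives the same orders rigorously.

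To make this airtight, three points need fixing:
\begin{itemize}
\item Write the factored term componentwise as $\sum_{j,\ell}\big(\sigma^j_\ell(X_{t-\delta})-\sigma^j_\ell(X_t)\big)(\cM_n)^{i,j}_{\ell,t}$. The random matrix sits between $K$ and $\rd W$, so it cannot be moved to the left of $K$.
\item On $[0,t-\delta]$ your integrand still contains $\sigma(X_{t-\delta})$, which is not adapted there. That piece also needs the factoring: the Gaussian variance is at most $Cn^{2\alpha-1}h^2\delta^{2\alpha-4}$, which yields your $O(n^{\alpha-\frac32}\delta^{\alpha-2})$.
\item On $[t-\delta,t]$ no factoring is needed. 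The difference $\sigma(\widehat X_{\underline s})-\sigma(X_{t-\delta})$ is adapted, so It\^o's isometry applies to it directly.
\end{itemize}

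Finally, your derivation of the kernel bounds is flawed as written. The mean-value bound $Ch(t-s)^{\alpha-2}$ is not integrable up to $s=t$. Used on all of $[0,\underline t)$, its integral blows up as $t\downarrow\underline t$ and is infinite at grid points. Use it only where $t-s\ge h$ and treat the last two cells crudely. This gives $Ch^{\alpha}$ with no logarithm (and $Ch^{2\alpha-1}$ in $L^2$), which is exactly \eqref{eq.K-reg} and yields $\|C^n_t\|_{L^2}\le Cn^{-1/2}$ as in the paper.
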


\begin{lemma} \label{lm.R_n1}
As $n$ goes to infinity, the finite-dimensional distributions of the process $\{\widetilde{R}^{n}_{\underline{t}},t\in(0,T]\}$ converge in distribution to those of $\{ \widetilde{\cR}_t,t\in(0,T]\}$, whose definition can be found in \eqref{eq.def:widetilde_cR}. 
\end{lemma}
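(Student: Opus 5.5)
Fix grid times $0<t_1<\cdots<t_r\le T$ and write $\underline{t}_p=\lfloor t_p n\rfloor/n$. In $\widetilde{R}^n_{\underline{t}}$ the integrand $\sigma(X_{\underline t})$ is evaluated at the endpoint, so it is not adapted on $[0,\underline{t}]$. We remove this obstacle by freezing the coefficient slightly earlier. Choose a mesoscopic window $\delta_n\to0$ with $n\delta_n\to\infty$, say $\delta_n=n^{-1/2}$, and split
\begin{align*}
\widetilde{R}^n_{\underline t}
= n^{\alpha-\frac12}\int_0^{\underline t-\delta_n}\big(K(\underline t-s)-K(\underline t-\underline s)\big)\rd W_s\,\sigma(X_{\underline t})
+ n^{\alpha-\frac12}\int_{\underline t-\delta_n}^{\underline t}\big(K(\underline t-s)-K(\underline t-\underline s)\big)\rd W_s\,\sigma(X_{\underline t}).
\end{align*}
Here the matrix $K$ multiplies the stochastic integral vector-wise. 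For $s\le \underline t-\delta_n$ we have $|K(u)-K(v)|\le C|u-v|\,u^{\alpha-2}$ by the Appendix kernel estimates. Hence the second moment of the first integral is at most
\begin{align*}
C n^{2\alpha-1}h^2\int_{\delta_n}^\infty u^{2\alpha-4}\,\rd u = C (n\delta_n)^{2\alpha-3}\to0 .
\end{align*}
Multiplying by the bounded-moment factor $\sigma(X_{\underline t})$ and applying H\"older shows that this piece vanishes in $L^1$. In the remaining piece we replace $\sigma(X_{\underline t})$ by $\sigma(X_{\underline t-\delta_n})$. The error is controlled by \eqref{eq.solutionReqularity}, which gives $\delta_n^{\alpha-\frac12}\to0$, together with the uniform $L^2$ bound on the local stochastic integral established below.

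Next we identify the local Gaussian vector. Set $G^{n,p}:=n^{\alpha-\frac12}\int_{\underline t_p-\delta_n}^{\underline t_p}(K(\underline t_p-s)-K(\underline t_p-\underline s))\rd W_s\in\hR^{d\times m}$, whose $(i,\ell)$ entry is $\sum_j(\cdot)^i_j\,\rd W^j$. We need $\sigma$ to contract against it so as to produce $\sum_{j,\ell}\sigma^j_\ell Z^{i,j}_\ell$. Accordingly, $Z^{i,j}_{\ell,t}$ should be read as the $(i,j)$ kernel entry integrated against $W^\ell$. Since $\underline t_p$ is a grid point, the substitution $\underline t_p - s = h y$ makes $\lceil y\rceil h$ play the role of $\underline t_p-\underline s$.

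Because the window is short and $E_{\alpha,\alpha}(Au^\alpha)=\frac{1}{\Gamma(\alpha)}I+O(u^\alpha)$, we have $K(u)=\cK(u)\Gamma(\alpha)^{-1}I+O(u^{2\alpha-1})$. The $O(\cdot)$ remainder contributes, after the $n^{\alpha-\frac12}$ scaling, terms that vanish in $L^2$; this is checked using the Lipschitz bound on $\cE$ from the Appendix. The leading term is diagonal in $(i,j)$, which explains the indicator $i=j$ in the covariance of $Z$. Its variance is
\begin{align*}
\frac{n^{2\alpha-1}}{\Gamma^2(\alpha)}\int_0^{n\delta_n}h^{2\alpha-1}\big(y^{\alpha-1}-\lceil y\rceil^{\alpha-1}\big)^2\rd y
\;\to\;\frac{1}{\Gamma^2(\alpha)}\int_0^\infty\big(y^{\alpha-1}-\lceil y\rceil^{\alpha-1}\big)^2\rd y .
\end{align*}
This should coincide with $\kappa_2^2(\alpha)$ after the change of variables $y\mapsto\lceil y\rceil-x$ on each unit cell, which turns the integral into the double integral in \eqref{eq.kappa2}. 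Thus $G^{n,p}$ is exactly Gaussian, with covariance converging to $\kappa_2^2(\alpha)$ times the identity structure. For distinct $p\neq p'$ the windows $[\underline t_p-\delta_n,\underline t_p]$ are eventually disjoint, so the $G^{n,p}$ are independent. Moreover, $G^{n,p}$ is independent of $\mathcal{F}_{\underline t_p-\delta_n}$.

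We conclude via characteristic functions, working by induction on $p$ from the latest time backwards. Conditioning on $\mathcal{F}_{\underline t_r-\delta_n}$ shows that the joint characteristic function of $(\sigma(X_{\underline t_p-\delta_n})G^{n,p})_{p\le r}$ equals $\hE\big[\prod_p\exp(-\tfrac12\kappa_2^2\,\xi_p^\top\Sigma_p\xi_p)\big]+o(1)$, where $\Sigma_p$ is the covariance built from $\sigma\sigma^\top(X_{t_p})$. Here we use the continuity of $X$ and $\underline t_p\to t_p$. The limit is the characteristic function of $(\widetilde{\cR}_{t_p})_p$, whose $Z$ is independent of $W$ and independent across distinct times. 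The main obstacle is the precise covariance computation. In particular one must verify that the matrix part $\cE$ contributes nothing at leading order, and that the cutoff at $n\delta_n$ leaves both the variance limit and the negligibility estimates intact.
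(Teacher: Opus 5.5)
Your localization steps are correct: the far-field bound $C(n\delta_n)^{2\alpha-3}$, the freezing error $O(\delta_n^{\alpha-\frac12})$, the negligibility of the $\cE-\Gamma(\alpha)^{-1}I$ remainder, and the identification of $\Gamma(\alpha)^{-2}\int_0^\infty(y^{\alpha-1}-\lceil y\rceil^{\alpha-1})^2\rd y$ with $\kappa_2^2(\alpha)$. For a single time, conditioning on $\cF_{\underline{t_1}-\delta_n}$ does finish the argument. There is one index slip: $G^{n,p}$ should be the $d\times d\times m$ array with entries $n^{\alpha-\frac12}\int(K^i_j(\underline{t_p}-s)-K^i_j(\underline{t_p}-\underline{s}))\rd W^\ell_s$, whereas your $\sum_j(\cdot)^i_j\,\rd W^j$ pairs a $d$-index with the $m$-dimensional $W$.

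The gap is in the multi-time step. Write $\sigma_p:=\sigma(X_{\underline{t_p}-\delta_n})$ and $\phi_n(\sigma):=\hE[e^{\bm{i}\xi_r\cdot\sigma G^{n,r}}]$. After integrating out $G^{n,r}$ you are left with $\hE\big[\prod_{p<r}e^{\bm{i}\xi_p\cdot\sigma_pG^{n,p}}\,\phi_n(\sigma_r)\big]$. The factor $\phi_n(\sigma_r)$, or its limit $\Phi_r=\exp(-\frac12\kappa_2^2(\alpha)\xi_r^\top\sigma\sigma^\top(X_{t_r})\xi_r)$, is a functional of the whole path of $W$ on $[0,\underline{t_r}-\delta_n]$. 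That path includes the window carrying $G^{n,r-1}$. So this factor is neither $\cF_{\underline{t_{r-1}}-\delta_n}$-measurable nor independent of $G^{n,r-1}$. The next conditioning therefore does not factor, and your backward induction does not close. Since every $\sigma(X_{t_p})$ depends on all earlier windows, no ordering of the times avoids this. The real obstacle is therefore not the covariance computation, which is routine. It is decoupling the non-adapted coefficients $\sigma(X_{t_p})$ from the Gaussian blocks.

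The missing ingredient is a stable (weighted) form of the claim, proved by induction on $k$:
\[
\hE\Big[H\prod_{p\le k}e^{\bm{i}\xi_p\cdot\sigma_pG^{n,p}}\Big]\to\hE\Big[H\prod_{p\le k}\Phi_p\Big]
\]
for every fixed bounded $\sigma(W)$-measurable $H$. The inductive step runs as follows:
\begin{itemize}
\item Replace $H$ by $\hE[H\,|\,\cF^{W}_{\underline{t_k}}]$, which is allowed because all the factors are $\cF^{W}_{\underline{t_k}}$-measurable.
\item Replace this by $\hE[H\,|\,\cF^{W}_{\underline{t_k}-\delta_n}]$ at $o(1)$ cost in $L^1$, using continuity at $t_k$ of the bounded Brownian martingale $s\mapsto\hE[H\,|\,\cF^{W}_s]$.
\item Integrate out $G^{n,k}$ and replace $\phi_n(\sigma_k)$ by $\Phi_k$.
\item Restore $\hE[H\,|\,\cF^{W}_{t_k}]$ and apply the induction hypothesis to $H\Phi_k$.
\end{itemize}

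The paper gets the same asymptotic independence by a different route and without localizing. Since $\cM_n$ is a Wiener integral of a deterministic kernel, $(\cM_n,W)$ is jointly Gaussian. Lemma \ref{lm.cM_limit_Cov} gives the limiting covariance, including decorrelation across distinct times. The bound $|\hE[(\cM_n)^{i,j}_{\ell,\underline t}(W^q_b-W^q_a)]|\le Cn^{-1/2}$ removes the cross-covariance with $W$. Testing against $G=\exp(\sum\mu^\top\Delta W)$ plus an $L^2$-density argument then yields \eqref{eq.stable cov}, and $\sigma(X_{t_p})$ is attached by the continuous mapping theorem. Your windowing gives independence across times for free, but it does not remove the need for this stable-convergence step.
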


\vskip 0.5em 
\textit{Proof of Theorem \ref{thm.An}}.
On one hand, it follows from Lemma \ref{lem.replaceR_n1}(i) that for any $t_{1}, \dots, t_{m} \in (0, T]$, 
\begin{align*}
(C_{t_1}^n, \dots, C_{t_m}^n) \xrightarrow[n \to \infty]{\hP} 
0. 
\end{align*} 
On the other hand, it follows from Lemma \ref{lem.replaceR_n1}(ii) that 
\begin{align*}
\hE \left[\left| (\widehat{R}_{t_1}^{n}, \dots, \widehat{R}_{t_m}^{n}) - (\widetilde{R}_{t_1}^{n}, \dots, \widetilde{R}_{t_m}^{n}) \right|^2 \right]\xrightarrow[n \to \infty]{} 
0,
\end{align*}
which together with Lemma \ref{lm.R_n1} and Slutsky's theorem (see e.g., \cite[Theorem 13.18]{Klenke2020}) indicates that for any $t_{1}, \dots, t_{m} \in (0, T]$, 
\begin{align*}
(\widehat{R}_{\underline{t_1}}^{n}, \dots, \widehat{R}_{\underline{t_m}}^{n}) 
\xrightarrow[n \to \infty]{d} 
(\widetilde{\cR}_{t_1}, \cdots, \widetilde{\cR}_{t_m}).
\end{align*} 
Thus, by Slutsky's theorem again, one can conclude that 
\begin{align*}
(R_{\underline{t_1}}^n, \dots, R_{\underline{t_m}}^n) = (C_{\underline{t_1}}^n + \widehat{R}_{\underline{t_1}}^{n}, \dots, C_{\underline{t_m}}^n + \widehat{R}_{\underline{t_m}}^{n}) \xrightarrow[n \to \infty]{d} 
(\widetilde{\cR}_{t_1}, \cdots, \widetilde{\cR}_{t_m}), 
\end{align*}
which completes the proof. 
\hfill$\Box$

\section{Proof of Lemma \ref{lm.covV}} 
 \label{sec.asym_behav_Y_n}

We establish the following auxiliary lemmas, which serve as crucial ingredients in the proof of Lemma \ref{lm.covV}.

\begin{lemma} \label{lm.cB_n}
Let $(i, j) \in \{1, 2, \cdots, d\}^2$. For $0\leq v < s\leq T$, define 
\begin{align*}
\cB_n(v, s) 
= n^{2\alpha-1} \int_0^{\underline{v}} \left| \big( K^{i}_{j} (s-u) - K^{i}_{j}(\underline{s} - \underline{u}) \big) \big( K^{i}_{j} (v-u) - K^{i}_{j}(\underline{v} - \underline{u}) \big) \right| \rd u. 
\end{align*} 
Then, $\sup_{0 \leq v < s \leq T} \sup_n \cB_n(v, s) < \infty$ and $\lim_{n \to \infty} \cB_n(v, s) = 0$. 
\end{lemma}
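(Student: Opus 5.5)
Both claims come from a pointwise estimate on the kernel increment, combined with the change of variables $u = hx$ and dominated convergence. From Appendix~\ref{appen:ML function} we use the following properties of $K=\cK\,\cE$: the entries of $\cE(u)=E_{\alpha,\alpha}(Au^\alpha)$ are bounded on $[0,T]$, and they are Lipschitz in the variable $u^\alpha$. Consequently
\begin{align*}
|K^i_j(r)|\le C r^{\alpha-1}, \qquad |K^i_j(r)-K^i_j(r')|\le C\big(|r^{\alpha-1}-r'^{\alpha-1}| + r'^{\alpha-1}|r^\alpha-r'^\alpha|\big), \qquad 0<r'\le r\le T.
\end{align*}
For $u<\underline v$ we have $s-u\ge \underline s-\underline u\ge h$, and $(s-u)-(\underline s-\underline u)\in(-h,h)$. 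The symmetric version of this bound (taking the minimum argument) therefore gives
\begin{align*}
|K^i_j(s-u)-K^i_j(\underline s-\underline u)| \le C\, g_h(s-u),
\end{align*}
where $g_h(r)\lesssim h^{\alpha-1}$ when $r\lesssim 2h$, and $g_h(r)\lesssim h\,r^{\alpha-2}+h r^{2\alpha-2}\lesssim h r^{\alpha-2}$ when $r\ge 2h$ (the mean value theorem applies since $r'\ge r-h\ge r/2$). The same bound holds with $s$ replaced by $v$.

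\emph{Uniform boundedness.} By Cauchy--Schwarz,
\begin{align*}
\cB_n(v,s)\le n^{2\alpha-1}\Big(\int_0^{\underline v} g_h(s-u)^2\,\rd u\Big)^{1/2}\Big(\int_0^{\underline v} g_h(v-u)^2\,\rd u\Big)^{1/2}.
\end{align*}
For each factor we have
\begin{align*}
\int_0^{\infty} g_h(r)^2\,\rd r \lesssim h\cdot h^{2\alpha-2} + h^2\int_{2h}^\infty r^{2\alpha-4}\,\rd r \lesssim h^{2\alpha-1},
\end{align*}
and the integral converges because $2\alpha-4<-1$. Since $n^{2\alpha-1}=h^{1-2\alpha}$ (with $T=1$), the product is bounded uniformly in $n$, $v$ and $s$.

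\emph{Convergence to zero.} Substitute $u=\underline v - hx$, so that $v-u = h(x+\{v/h\})$ and $s-u=(s-\underline v)+hx$. This yields
\begin{align*}
\cB_n(v,s)=n^{2\alpha-1}h\int_0^{\underline v/h} |\Delta_s(x)|\,|\Delta_v(x)|\,\rd x,
\end{align*}
where $\Delta_v(x)$ is of size $h^{\alpha-1}\phi(x)$ with $\phi(x)\lesssim \min(1,x^{\alpha-2})$. The key point is that $s>v$ is fixed, so $s-\underline v\to s-v>0$. Hence for $u$ near $\underline v$ the increment $\Delta_s$ is of order $h$ rather than $h^{\alpha-1}$. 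In general, $|\Delta_s(x)|\lesssim h\,(s-v+hx)^{\alpha-2}$ once $n$ is large enough that $s-\underline v\ge (s-v)/2\ge 2h$. The integrand is therefore bounded by
\begin{align*}
h^{1-2\alpha}\cdot h\cdot h(s-v)^{\alpha-2}\cdot h^{\alpha-1}\phi(x) = h^{1-\alpha}(s-v)^{\alpha-2}\phi(x).
\end{align*}
Since $\phi\in L^1(0,\infty)$ (because $\alpha-2<-1$), we get $\cB_n(v,s)\lesssim (s-v)^{\alpha-2}h^{1-\alpha}\to0$.

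The main obstacle is making the pointwise kernel-increment bound rigorous for the non-diagonal matrix kernel. This requires Lipschitz-type control of $E_{\alpha,\alpha}(Au^\alpha)$ in $u^\alpha$ near $0$ and of its derivative away from $0$. That control comes from the series and integral representation in Appendix~\ref{appen:ML function}, using negative definiteness of $A$ for boundedness on $[0,T]$. A second point is the careful handling of the floors: near the diagonal, $\underline s-\underline u$ and $s-u$ may differ by almost $h$, so we must ensure $\underline s-\underline u\ge h$ on $[0,\underline v)$, which holds since $\underline u\le \underline v-h\le\underline s-h$.
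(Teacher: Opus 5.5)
Your argument is correct in structure but follows a different route from the paper's. The paper rescales by $y=\lfloor nv\rfloor-nu$, so that $\cB_n(v,s)=\int_0^{\lfloor nv\rfloor}g^n(y)\,\rd y$. It then proves a single $n$-independent majorant $g^n\le C\Psi$, with $\Psi$ as in \eqref{eq.defPsi}. Integrability of $\Psi$ gives the uniform bound. The limit follows by dominated convergence, because for fixed $v<s$ the first factor of $g^n(y)$ is at most $C\big(\cK(y+ns-\lfloor nv\rfloor)+\cK(\lceil y\rceil+\lfloor ns\rfloor-\lfloor nv\rfloor)\big)\to0$ pointwise.

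You separate the two claims. Your uniform bound is Cauchy--Schwarz plus the $L^2$ increment estimate. This is exactly the last inequality of \eqref{eq.K-reg}, applied with $t=s$ and $t=v$ and using $\underline v\le\underline s$. Your limit comes from a quantitative argument. If $2h\le s-v$, then every $u<\underline v$ satisfies $s-u\ge 2h$. The mean value theorem with $|K'(r)|\le Cr^{\alpha-2}$ then gives $\sup_{u<\underline v}|K^i_j(s-u)-K^i_j(\underline s-\underline u)|\le Ch(s-v)^{\alpha-2}$. The other factor is controlled in $L^1$ by the third inequality of \eqref{eq.K-reg}.

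What you gain is an explicit rate, and a proof that relies almost entirely on estimates already in the appendix. What the paper gains is a single majorant in the rescaled variable $y$. It reuses exactly this device in Lemmas \ref{lm.cJ_n} and \ref{lm.cM_limit_Cov}, where the limits are nonzero and must be identified by dominated convergence rather than merely shown to vanish.

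Two local corrections are needed.

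\begin{itemize}
\item Your claim $s-u\ge\underline s-\underline u$ is false in general. If $v$ and $s$ lie in the same cell, then $\underline v=\underline s$, and for $u$ just below $\underline v$ the quantity $s-u$ can be much smaller than $h$. Similarly, $v-u=h(x+\{v/h\})$ can be much smaller than $h$. Near the diagonal the correct bounds are therefore $g_h(r)\lesssim r^{\alpha-1}$ for $r\le 2h$ and $\phi(x)\lesssim x^{\alpha-1}$ for $x\le 1$, not $h^{\alpha-1}$ and $1$. These are still square-integrable (since $2\alpha-2>-1$) and integrable, respectively, so your conclusions stand. This is precisely the $y^{2\alpha-2}$ branch of $\Psi$ in the paper.
\item The exponent in your final bound is $(1-2\alpha)+1+1+(\alpha-1)=2-\alpha$, not $1-\alpha$. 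In fact $\cB_n(v,s)\le C(s-v)^{\alpha-2}h^{2-\alpha}$.
\end{itemize}
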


\begin{proof} 
We define the dominating function $\Psi: (0, \infty) \rightarrow (0, \infty)$ by
\begin{align} \label{eq.defPsi} 
\Psi(y) = 
\begin{cases} 
y^{2\alpha-2}, & y \in (0, 1], \\
y^{2\alpha-4}, & y \in (1, \infty), 
\end{cases}
\end{align}
which satisfies that $\int_{0}^{\infty} \Psi(y) \rd y \leq C$ since $\alpha \in (\frac{1}{2},1)$. By $K(\cdot) = \cK(\cdot) \cE(\cdot)$, \eqref{eq.def:k&e} and the change of variables $y = \lfloor nv \rfloor - nu$, one has 
\begin{align}
\cB_n(v, s) 
&= n \int_0^{\underline{v}} \left| \cK(ns-nu) \cE^{i}_j(s-u) - \cK( \lfloor ns \rfloor - \lfloor nu \rfloor) \cE^{i}_j(\underline{s} - \underline{u}) \right| \notag \\ 
&\qquad\quad \times \left| \cK(nv-nu) \cE^{i}_j(v-u) - \cK( \lfloor nv \rfloor - \lfloor nu \rfloor) \cE^{i}_j\left(\underline{v} - \underline{u} \right) \right| \rd u 
= \int_0^{ \lfloor nv \rfloor} g^n(y) \rd y, \label{eq.Bn_vs_bound}
\end{align}
where 
\begin{align*}
g^n(y) 
&:= \biggr| \cK(y + ns- \lfloor nv \rfloor) \cE^{i}_j\left(\frac{y + ns- \lfloor nv \rfloor}{n}\right)- \cK(\lceil y\rceil + \lfloor ns \rfloor - \lfloor nv \rfloor) \cE^{i}_j\left(\frac{\lceil y\rceil + \lfloor ns \rfloor - \lfloor nv \rfloor}{n}\right)
 \biggr | \\ 
&\quad\ \times \biggr| \cK(y + nv- \lfloor nv \rfloor) \cE^{i}_j \left(\frac{y + nv- \lfloor nv \rfloor}{n} \right) 
- \cK(\lceil y\rceil) \cE^{i}_j\left(\frac{\lceil y\rceil}{n} \right) \biggr|. 
\end{align*} 
When $y \in (0,1]$, it follows from \eqref{eq.cU_Holder} that 
\begin{align*}
g^n(y) 
&\leq C \big(\cK(y + ns- \lfloor nv \rfloor) + \cK(\lceil y\rceil + \lfloor ns \rfloor- \lfloor nv \rfloor) \big) \big(\cK(y + nv- \lfloor nv \rfloor) + \cK(\lceil y \rceil) \big) \\ 
&\leq C \cK^2(y) = C \Psi(y). 
\end{align*}
When $y \in (1,\infty)$, it follows from \eqref{eq.MLder} and \eqref{eq.cU_Holder} that 
\begin{align*}
g^n(y) 
&\leq \left| \int_{\lceil y\rceil + \lfloor ns \rfloor - \lfloor nv \rfloor}^{y + ns- \lfloor nv \rfloor} \rd \left(x^{\alpha-1} E^{i,j}_{\alpha, \alpha} \left(\frac{A}{n^{\alpha}}x^{\alpha} \right )\right) \right| \left| \int_{\lceil y\rceil}^{y + nv- \lfloor nv \rfloor} \rd \left(x^{\alpha-1} E^{i,j}_{\alpha, \alpha} \left(\frac{A}{n^{\alpha}}x^{\alpha} \right)\right) \right| \\
&\leq \int_{\lceil y\rceil + \lfloor ns \rfloor - \lfloor nv \rfloor}^{y + ns- \lfloor nv \rfloor} y^{\alpha-2} \left| E^{i,j}_{\alpha, \alpha-1} \left(\frac{A}{n^{\alpha}}x^{\alpha} \right) \right| \rd x\, \int_{\lceil y\rceil}^{y + nv- \lfloor nv \rfloor} y^{\alpha-2} \left| E^{i,j}_{\alpha, \alpha-1} \left(\frac{A}{n^{\alpha}}x^{\alpha} \right) \right| \rd x \\ 
&\leq C y^{2\alpha-4} = C \Psi(y). 
\end{align*}
Thus, applying \eqref{eq.Bn_vs_bound}, one obtains 
\begin{align} \label{eq.g_nBound}
\sup_{0 \leq v < s \leq T} \sup_n \cB_n(v, s)\leq\int_0^{\infty} g^n(y) \rd y 
\leq C \int_0^{\infty} \Psi(y) \rd y \leq C,
\end{align}
where $C>0$ independent of $n$, $v$ and $s$.

For any $u\in(0,\infty)$, by the fact that $\cE\left(\frac {u}{n}\right) \xrightarrow[n\to\infty]{} \frac{1}{\Gamma(\alpha)} I_{d \times d}$ (where $I_{d \times d}$ denotes the $d \times d$ identity matrix), one gets
\begin{align} \label{eq.limeij}
\lim_{n\to \infty} \cE^{i}_{j}\left( \frac{u}{n} \right) = \frac{\delta_{i,j}}{\Gamma(\alpha)} = \begin{cases} \frac{1}{\Gamma(\alpha)}, & i = j, \\ 0, & i \neq j, \end{cases}
\end{align}
where $\delta_{i,j}$ is the Kronecker delta. For any $0\leq v < s\leq T$ and $y \in (0, \lfloor nv \rfloor)$, using \eqref{eq.cU_Holder} and the monotonicity of $\cK(\cdot)$ shows 
\begin{align*}
&\ \bigg| \cK(y + ns- \lfloor nv \rfloor) \cE^{i}_j \left(\frac{y + ns- \lfloor nv \rfloor}{n} \right)- \cK( \lceil y\rceil + \lfloor ns \rfloor - \lfloor nv \rfloor) \cE^{i}_j\left(\frac{\lceil y\rceil + \lfloor ns \rfloor - \lfloor nv \rfloor}{n} \right) \bigg| \\
&\leq C \big( \cK(y + ns- \lfloor nv \rfloor) + \cK( \lceil y\rceil + \lfloor ns \rfloor - \lfloor nv \rfloor) \big) 
\xrightarrow[n \to \infty]{} 0 
\end{align*} 
and 
\begin{align*}
\left|\cK(y + nv- \lfloor nv \rfloor) \cE^{i}_j \left(\frac{y + nv- \lfloor nv \rfloor}{n} \right) - \cK( \lceil y\rceil) \cE^{i}_j\left(\frac{\lceil y\rceil}{n} \right)\right|\leq C\cK(y).
\end{align*} 
Further by the dominated convergence theorem (DCT), $\int_0^\infty g^n(y) \rd y \xrightarrow[n \to \infty]{} 0$, which together with \eqref{eq.Bn_vs_bound}, implies that $\lim_{n \to \infty} \cB_n(v, s) = 0$. The proof of Lemma \ref{lm.cB_n} is completed. 
\end{proof}

\begin{lemma} \label{lm.cJ_n}
Let $(i_1,i_2,j_1,j_2) \in \{1,2,\cdots, d\}^4$. For $s \in [0,T]$, $y \in (0,\infty)$ and integer $n \geq 1$, define 
\begin{align} \label{eq.F^n(s,y)}
(\cJ_n)^{i_1,i_2}_{j_1,j_2}(s,y) 
&:= \left( \cK(y + ns - \lfloor ns \rfloor) \cE^{i_1}_{j_1}\left(\frac{y + ns- \lfloor ns \rfloor}{n} \right) - \cK(\lceil y \rceil) \cE^{i_1}_{j_1} \left(\frac{\lceil y\rceil}{n} \right) \right) \nonumber \\ 
&\qquad \times \left(\cK(y + ns - \lfloor ns \rfloor) \cE^{i_2}_{j_2}\left(\frac{y + ns- \lfloor ns \rfloor}{n} \right) - \cK(\lceil y \rceil) \cE^{i_2}_{j_2} \left(\frac{\lceil y\rceil}{n} \right) \right) 
\end{align}
and
\begin{align*}
(\cG_n)^{i_1,i_2}_{j_1,j_2}(s,y) 
:=
\begin{cases}
\frac{1}{\Gamma^{2}(\alpha)} \big(\cK(y + ns - \lfloor ns \rfloor) - \cK(\lceil y \rceil) \big)^{2},& i_1 = j_1 \text{and } i_2 = j_2, \\ 0, & \text{otherwise}. 
 \end{cases} 
 \end{align*}
Then for any $t \in [0,T]$, it holds that 
\begin{align} 
& \int_0^t \int_0^{\infty} 1_{( \lfloor ns \rfloor, \infty)} (y) (\cJ_n)^{i_1,i_2}_{j_1,j_2}(s,y) \sigma_{\ell}^{j_1}(X_{s}) \sigma_{\ell}^{j_2}(X_{s}) \rd y \rd s \xrightarrow[n \to \infty]{L^2(\Omega)} 0 \label{eq.ns_to_infty_for_cJ_n}, \\
& \int_0^t \int_0^{\infty} \big( (\cJ_n)^{i_1,i_2}_{j_1,j_2}(s,y) - (\cG_n)^{i_1,i_2}_{j_1,j_2}(s,y) \big) \sigma_{\ell}^{j_1}(X_{s}) \sigma_{\ell}^{j_2}(X_{s}) \rd y \rd s \xrightarrow[n \to \infty]{L^2(\Omega)} 0. \label{eq.E to 1}
\end{align} 
\end{lemma}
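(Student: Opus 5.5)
Both limits will follow from a pointwise convergence of the integrands combined with dominated convergence, in the same way as the proof of Lemma \ref{lm.cB_n}. Since $\sigma(X_s)$ has bounded moments of all orders by \eqref{eq.momentBounded}, I will first pull the random factor out. Set $F_n(s) := \int_0^\infty 1_{(\lfloor ns\rfloor,\infty)}(y)\,|(\cJ_n)^{i_1,i_2}_{j_1,j_2}(s,y)|\,\rd y$ for \eqref{eq.ns_to_infty_for_cJ_n}, and $G_n(s) := \int_0^\infty |(\cJ_n-\cG_n)^{i_1,i_2}_{j_1,j_2}(s,y)|\,\rd y$ for \eqref{eq.E to 1}. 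By Minkowski's inequality, the $L^2(\Omega)$-norm of each left-hand side is bounded by
\[
\sup_{s}\|\sigma_\ell^{j_1}(X_s)\sigma_\ell^{j_2}(X_s)\|_{L^2}\int_0^t F_n(s)\,\rd s
\quad\text{or}\quad
\sup_{s}\|\sigma_\ell^{j_1}(X_s)\sigma_\ell^{j_2}(X_s)\|_{L^2}\int_0^t G_n(s)\,\rd s .
\]
The first factor is finite by \eqref{eq.momentBounded} and the linear growth of $\sigma$. So it suffices to show $\int_0^t F_n(s)\,\rd s\to0$ and $\int_0^t G_n(s)\,\rd s\to0$ deterministically.

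\textbf{Domination.} I reuse the bound from Lemma \ref{lm.cB_n}. For $y\in(0,1]$, \eqref{eq.cU_Holder} gives $|\cJ_n|,|\cG_n|\le C\cK^2(y)=C\Psi(y)$, because $y+ns-\lfloor ns\rfloor\ge y$. For $y>1$, I write each factor as the integral of the derivative from $\lceil y\rceil$ to $y+ns-\lfloor ns\rfloor$, an interval of length at most $1$. Together with \eqref{eq.MLder}, this yields $|\cJ_n|,|\cG_n|\le Cy^{2\alpha-4}=C\Psi(y)$, uniformly in $n$ and $s$. Hence $F_n,G_n\le C\int_0^\infty\Psi<\infty$ uniformly, and the outer $\rd s$-integral can be handled by dominated convergence once the integrands converge pointwise in $s$.

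\textbf{Pointwise limits.} For \eqref{eq.ns_to_infty_for_cJ_n}, fix $s>0$. Then $F_n(s)\le C\int_{\lfloor ns\rfloor}^\infty\Psi(y)\,\rd y\to0$ because $\lfloor ns\rfloor\to\infty$, and $s=0$ is a null set. For \eqref{eq.E to 1}, fix $s$ and $y$. The arguments $(y+ns-\lfloor ns\rfloor)/n$ and $\lceil y\rceil/n$ tend to $0$, so by \eqref{eq.limeij} each $\cE^{i}_{j}(\cdot/n)\to\delta_{i,j}/\Gamma(\alpha)$. The only obstacle is that $\theta_n:=ns-\lfloor ns\rfloor$ oscillates and has no limit. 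I will handle this by writing
\[
\cK(y+\theta_n)\cE^i_j\Big(\frac{y+\theta_n}{n}\Big)-\cK(\lceil y\rceil)\cE^i_j\Big(\frac{\lceil y\rceil}{n}\Big)
=\frac{\delta_{i,j}}{\Gamma(\alpha)}\big(\cK(y+\theta_n)-\cK(\lceil y\rceil)\big)+\rho_n(s,y),
\]
where
\[
|\rho_n(s,y)|\le \cK(y+\theta_n)\Big|\cE^i_j\Big(\frac{y+\theta_n}{n}\Big)-\frac{\delta_{i,j}}{\Gamma(\alpha)}\Big|+\cK(\lceil y\rceil)\Big|\cE^i_j\Big(\frac{\lceil y\rceil}{n}\Big)-\frac{\delta_{i,j}}{\Gamma(\alpha)}\Big|.
\]
Since $\cK(y+\theta_n)\le\cK(y)$ and the convergence of $\cE$ near $0$ is uniform on $[0,(y+1)/n]$ by continuity of $E_{\alpha,\alpha}$ at $0$, we get $\rho_n\to0$ uniformly in $\theta_n\in[0,1)$. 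Expanding the product $\cJ_n$ then gives $\cJ_n-\cG_n=$ cross terms involving $\rho_n$, each bounded by $C\cK(y)|\rho_n|\to0$. (When $\delta_{i_1,j_1}\delta_{i_2,j_2}=0$, $\cG_n=0$ and the same expansion applies.) So $\cJ_n-\cG_n\to0$ pointwise in $(s,y)$ even though $\theta_n$ does not converge, and dominated convergence in $y$ by $C\Psi$ and then in $s$ finishes the proof.

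I expect the main difficulty to be the large-$y$ domination: one needs the derivative bound \eqref{eq.MLder} on $x\mapsto x^{\alpha-1}E_{\alpha,\alpha}(Ax^\alpha/n^\alpha)$ to hold uniformly in $n$. This is exactly the estimate already used in Lemma \ref{lm.cB_n}, so it can be taken from there.
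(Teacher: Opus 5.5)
Your proposal is correct and takes essentially the same route as the paper: Minkowski's integral inequality together with \eqref{eq.momentBounded} to reduce to deterministic integrals, the domination $|(\cJ_n)^{i_1,i_2}_{j_1,j_2}|+|(\cG_n)^{i_1,i_2}_{j_1,j_2}|\le C\Psi(y)$ carried over from the proof of Lemma~\ref{lm.cB_n}, and dominated convergence in $(y,s)$. You also handle the oscillating offset $ns-\lfloor ns\rfloor$ explicitly, using that $\cE(u)\to\frac{1}{\Gamma(\alpha)}I_{d\times d}$ uniformly as $u\to0^+$ so that $\cJ_n-\cG_n\to0$ even though neither term converges; this fills in a step the paper covers only by citing \eqref{eq.limeij}.
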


\begin{proof}
Using Minkowski's integral inequality yields
\begin{align*} 
&\ \bigg\| \int_0^t \int_0^{\infty} 1_{( \lfloor ns \rfloor, \infty)} \left(y \right) (\cJ_n)^{i_1,i_2}_{j_1,j_2}(s,y) \sigma_{\ell}^{j_1}(X_{s}) \sigma_{\ell}^{j_2}(X_{s}) \rd y \rd s \bigg\|_{L^{2}} \nonumber \\ 
&\leq \int_0^t \int_0^{\infty} 1_{( \lfloor ns \rfloor, \infty)} \left(y \right) \big| (\cJ_n)^{i_1,i_2}_{j_1,j_2}(s,y) \big| \big\| \sigma_{\ell}^{j_1}(X_{s}) \sigma_{\ell}^{j_2}(X_{s}) \big\|_{L^{2}} \rd y \rd s. 
\end{align*}
By the linear growth of $\sigma$ and \eqref{eq.momentBounded}, one gets 
\begin{align} \label{eq.sigma^2}
\big\| \sigma_{\ell}^{j_1}(X_{s}) \sigma_{\ell}^{j_2}(X_{s}) \big\|_{L^{2}} 
\leq C.
\end{align}
By a similar argument to that used for \eqref{eq.g_nBound}, one also has 
\begin{align} \label{eq.cJn<Psi}
\big| (\cJ_n)^{i_1,i_2}_{j_1,j_2} (s,y) \big| + \big| (\cG_n)^{i_1,i_2}_{j_1,j_2} (s,y) \big| 
\leq C \Psi (y), \qquad \forall\, s \in [0,T], \quad y \in (0,\infty), 
\end{align} 
where $\Psi$ is defined in \eqref{eq.defPsi}, and $C > 0$ is independent of $n$. Then, by applying the DCT w.r.t.\ $ \rd y\otimes \rd s$, one can arrive at \eqref{eq.ns_to_infty_for_cJ_n}.

Using Minkowski's integral inequality and \eqref{eq.sigma^2} shows
\begin{align*} 
&\ \bigg\| \int_0^t \int_0^{\infty} \big( (\cJ_n)^{i_1,i_2}_{j_1,j_2}(s,y) - (\cG_n)^{i_1,i_2}_{j_1,j_2}(s,y) \big) \sigma_{\ell}^{j_1}(X_{s}) \sigma_{\ell}^{j_2}(X_{s}) \rd y \rd s \bigg\|_{L^{2}} \nonumber \\ 
&\leq C \int_0^t \int_0^{\infty} \big| (\cJ_n)^{i_1,i_2}_{j_1,j_2}(s,y) - (\cG_n)^{i_1,i_2}_{j_1,j_2}(s,y) \big| \rd y \rd s. 
\end{align*} 
Note that $\big| (\cJ_n)^{i_1,i_2}_{j_1,j_2}(s,y) - (\cG_n)^{i_1,i_2}_{j_1,j_2}(s,y) \big| \xrightarrow[n \to \infty]{} 0$ by \eqref{eq.limeij}. Thus, by recalling \eqref{eq.cJn<Psi} and applying the DCT w.r.t.\ $ \rd y\otimes \rd s$, one can obtain \eqref{eq.E to 1}. The proof of Lemma \ref{lm.cJ_n} is completed. 
\end{proof}

\subsection{Proof of \eqref{lm.covV(i)}}

Using \eqref{eq.MLE} and \eqref{eq.auxiMLE} yields that for $i \in \{1,\ldots,d\}$ and $s \in [0,T]$, 
\begin{align*} 
\widetilde{X}_s^i - \widehat{X}_{\underline{s}}^i 
= \sum_{j=1}^d \left( \psi^{i,j}_{n,0}(s) + \psi^{i,j}_{n, 1}(s) + \psi^{i,j}_{n, 2}(s) \right)
\end{align*}
with $\psi^{i,j}_{n,0}(s) := (E_{\alpha}^{i,j} (A s^{\alpha}) -E_{\alpha}^{i,j}(A\underline{s}^{\alpha})) X^{j}_{0}$, 
\begin{align*} 
\psi^{i,j}_{n, 1}(s) 
&:= \int_0^{s} K^{i}_{j}(s-u) b^j(\widehat{X}_{\underline{u}}) \rd u - \int_{0}^{\underline{s}} K^{i}_{j}({\underline{s}}-{\underline{u}}) b^j(\widehat{X}_{\underline{u}}) \rd u \\ 
&\, = \int_0^{\underline{s}} \big( K^{i}_{j}(s-u)- K^{i}_{j}({\underline{s}}-{\underline{u}}) \big) b^j(\widehat{X}_{\underline{u}}) \rd u 
+ \int_{\underline{s}}^s K^{i}_{j} (s-u) b^j(\widehat{X}_{\underline{u}}) \rd u, \\
\psi^{i,j}_{n, 2}(s) 
&:= \sum_{\ell=1}^{m} \int_{0}^{s} K^{i}_{j}(s-u) \sigma_{\ell}^{j}(\widehat{X}_{\underline{u}}) \rd W_{u}^{\ell} - \sum_{\ell=1}^{m} \int_{0}^{\underline{s}} K^{i}_{j}(\underline{s}-\underline{u}) \sigma_{\ell}^{j}(\widehat{X}_{\underline{u}}) \rd W_{u}^{\ell} \\ 
&\, = \sum_{\ell=1}^{m} \int_{0}^{\underline{s}} \big( K^{i}_{j}(s-u) - K^{i}_{j}(\underline{s} - \underline{u}) \big) \sigma_{\ell}^{j}(\widehat{X}_{\underline{u}}) \rd W_{u}^{\ell} + \sum_{\ell=1}^{m} \int_{\underline{s}}^{s} K^{i}_{j}(s-u) \sigma_{\ell}^{j}(\widehat{X}_{\underline{u}}) \rd W_{u}^{\ell}. 
\end{align*} 
Then, one gets the formula 
\begin{align*}
&\ \< \hat{V}^{n,k_1, \ell}, \hat{V}^{n,k_2, \ell} \>_t 
= n^{2\alpha-1} \int_0^t ( 
\widetilde{X}^{k_1}_s - \widehat{X}^{k_1}_{\underline{s}})(\widetilde{X}^{k_2}_s - \widehat{X}^{k_2}_{\underline{s}}) \rd s \\ 
&= n^{2\alpha-1} \int_0^t \bigg( \sum_{j_1=1}^d \psi^{k_1,j_1}_{n,0}(s) (\widetilde{X}^{k_2}_s - \widehat{X}^{k_2}_{\underline{s}}) + \sum_{j_1=1}^d \psi^{k_1,j_1}_{n, 1}(s) (\widetilde{X}^{k_2}_s - \widehat{X}^{k_2}_{\underline{s}})\\
&\quad + \sum_{j_1=1}^d\sum_{j_2=1}^d \psi^{k_1,j_1}_{n, 2}(s) \psi^{k_2,j_2}_{n,0}(s) + \sum_{j_1=1}^d \sum_{j_2=1}^d \psi^{k_1,j_1}_{n, 2}(s) \psi^{k_2,j_2}_{n, 1}(s) + \sum_{j_1=1}^d \sum_{j_2=1}^d 
\psi^{k_1,j_1}_{n, 2}(s) \psi^{k_2,j_2}_{n, 2}(s) \bigg) \rd s. 
\end{align*}
Thus, the proof of \eqref{lm.covV(i)} can be completed by \eqref{X-hatX_Lp} and Lemma \ref{lm.main} below. 
\hfill$\Box$

\begin{lemma} \label{lm.main}
For any $(i, j) \in \{1,\ldots,d\}^2$, it holds that 
\begin{align}
\lim_{n \to \infty} \sup_{s \in [0, T]} \left\|n^{\alpha-\frac{1}{2}} \psi^{i,j}_{n,0}(s) \right\|_{L^2} &= 0, \label{eq.EX0}\\
\lim_{n \to \infty} \sup_{s \in [0, T]} \big\| n^{\alpha-\frac{1}{2}} \psi^{i,j}_{n, 1}(s) \big\|_{L^2} &= 0, \label{eq.psi_{1, s}} \\
\sup_{n \geq 1} \sup_{s \in [0, T]} \big\| n^{\alpha-\frac{1}{2}} \psi^{i,j}_{n, 2}(s) \big\|_{L^2} &< \infty. \label{eq.psi_{2, s}}
\end{align} 
Moreover, for any $(i_1,j_1,i_2,j_2) \in \{1,\ldots,d\}^4$ and $t \in [0, T]$, 
\begin{align} \label{eq.psi_{2, s}^2}
n^{2\alpha-1} \sum_{j_1=1}^d \sum_{j_2=1}^d \int_0^t \psi^{i_1,j_1}_{n,2}(s) \psi^{i_2,j_2}_{n,2}(s) \rd s 
\ \xrightarrow[n \to \infty]{L^2(\Omega)}\ 
\kappa_{1}^{2}(\alpha) \sum_{\ell=1}^m \int_0^t \sigma_{\ell}^{i_1}(X_s) \sigma_{\ell}^{i_2}(X_s) \rd s, 
\end{align}
where the constant $\kappa_{1}^{2}(\alpha)$ is defined in \eqref{eq.kappa1}. 
\end{lemma}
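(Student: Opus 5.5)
We treat the four claims separately; the first three are deterministic-kernel estimates, and the last is the substantial one.

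\emph{Proof of \eqref{eq.EX0}.} We use the derivative formula for $E_\alpha(At^\alpha)$ from the Appendix, $\frac{\rd}{\rd t}E_\alpha(At^\alpha)=At^{\alpha-1}E_{\alpha,\alpha}(At^\alpha)$, which is bounded by $Ct^{\alpha-1}$. This gives
\[
|\psi_{n,0}^{i,j}(s)|\le C|X_0|\int_{\underline s}^s u^{\alpha-1}\rd u\le C|X_0|h^{\alpha}.
\]
Multiplying by $n^{\alpha-\frac12}$ leaves $Cn^{-1/2}\to0$, uniformly in $s$.

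\emph{Proof of \eqref{eq.psi_{1, s}}.} We split $\psi_{n,1}$ into its two integrals. The local piece $\int_{\underline s}^s K(s-u)b\,\rd u$ is bounded in $L^2$ by $Ch^\alpha$, using \eqref{eq.momentBounded} and the linear growth of $b$. For the other piece we apply Minkowski's inequality and change variables as in Lemma \ref{lm.cB_n}, which gives
\[
n^{\alpha-\frac12}\int_0^{\underline s}|K(s-u)-K(\underline s-\underline u)|\,\rd u\le n^{\alpha-\frac12}\,n^{-\alpha}\int_0^\infty \phi(y)\,\rd y.
\]
Here $\phi(y)\le Cy^{\alpha-1}$ on $(0,1]$ and $\phi(y)\le Cy^{\alpha-2}$ beyond, by \eqref{eq.cU_Holder} and \eqref{eq.MLder}. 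Since $\int_0^\infty\phi<\infty$, the bound is $Cn^{-1/2}\to0$.

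\emph{Proof of \eqref{eq.psi_{2, s}}.} We apply the It\^o isometry to both stochastic integrals in $\psi_{n,2}$. The local term contributes $n^{2\alpha-1}\int_{\underline s}^s|K(s-u)|^2\rd u\le C$. The other term, after rescaling $y=\lfloor ns\rfloor-nu$, contributes $\int_0^\infty\Psi(y)\rd y<\infty$ by the same domination used for \eqref{eq.cJn<Psi}.

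\emph{Proof of \eqref{eq.psi_{2, s}^2}.} We write $n^{\alpha-\frac12}\psi^{i,j}_{n,2}(s)=\sum_\ell\int_0^s G^{i,j}_n(s,u)\sigma^j_\ell(\widehat X_{\underline u})\rd W^\ell_u$. The kernel is $G_n=n^{\alpha-\frac12}(K(s-u)-K(\underline s-\underline u))$ for $u<\underline s$ and $G_n=n^{\alpha-\frac12}K(s-u)$ for $u\in[\underline s,s)$. The product $\psi_{n,2}^{i_1,j_1}(s)\psi_{n,2}^{i_2,j_2}(s)$ is then decomposed by It\^o's formula into a bracket term and a martingale cross term:
\[
\sum_\ell\int_0^sG^{i_1,j_1}_nG^{i_2,j_2}_n\sigma^{j_1}_\ell\sigma^{j_2}_\ell(\widehat X_{\underline u})\,\rd u+M_n(s).
\]
\begin{itemize}
\item[(a)] \emph{Cross term.} We show $\int_0^tM_n(s)\rd s\to0$ in $L^2$. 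Its second moment is a double integral over $v<s$, and after It\^o isometry the integrand is controlled by $\cB_n(v,s)$ times bounded moments. We expect Lemma \ref{lm.cB_n}, together with the DCT, to kill it; contributions from $u$ in the last one or two cells need separate handling via the local bound.
\item[(b)] \emph{Freezing $\sigma$.} In the bracket term we replace $\sigma(\widehat X_{\underline u})$ by $\sigma(X_s)$. The kernel mass concentrates at $u$ within $O(h)$ of $s$ on the rescaled scale, so this costs $o(1)$ by \eqref{X-hatX_Lp}, the domination by $\Psi$, and the DCT.
\item[(c)] \emph{Identifying the limit.} We rescale with $y=\lfloor ns\rfloor-nu$ (and $x=ns-\lfloor ns\rfloor$). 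The $u<\underline s$ part becomes $\int_0^t\int_0^{\lfloor ns\rfloor}\cJ_n\sigma\sigma\,\rd y\,\rd s$. By \eqref{eq.ns_to_infty_for_cJ_n} and \eqref{eq.E to 1} this may be replaced by the integral of $\cG_n$ over $(0,\infty)$. The local part gives $\frac{1}{\Gamma^2(\alpha)}\frac{x^{2\alpha-1}}{2\alpha-1}$, using \eqref{eq.limeij} to force $j_1=i_1$, $j_2=i_2$.
\item[(d)] \emph{Equidistribution of $x$.} The remaining functions of $x$ are bounded and have period $1/n$ in $s$, while $s\mapsto\sigma(X_s)$ is continuous in $L^2$. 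A Riemann--Lebesgue type averaging therefore lets us replace $\int_0^t f(ns-\lfloor ns\rfloor)g(s)\rd s$ by $\int_0^1f\,\int_0^tg$. This yields $\int_0^1\!\int_0^\infty((y+x)^{\alpha-1}-\lceil y\rceil^{\alpha-1})^2\rd y\,\rd x+\int_0^1\frac{x^{2\alpha-1}}{2\alpha-1}\rd x$, which is exactly $\Gamma^2(\alpha)\kappa_1^2(\alpha)$.
\end{itemize}

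The main obstacle is step (a): the time-integrated product of stochastic integrals has a martingale part whose decay needs Lemma \ref{lm.cB_n} plus separate treatment of the diagonal cells where $\underline v=\underline s$.
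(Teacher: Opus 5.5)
Your proposal is correct and uses the same ingredients as the paper. The first three bounds are what the paper gets from \eqref{lm.|K-K|H}, \eqref{lm.|K-K|Z} and the $O(n^{-\alpha})$ increment bound for $E_\alpha(As^\alpha)$; you simply redo the kernel integrals via the rescaling $y=\lfloor ns\rfloor-nu$. Your steps (b)--(d) coincide with the paper's freezing argument \eqref{eq.sigma^2-sigma^2}--\eqref{eq.Fnsigma^2-sigma^2}, Lemma \ref{lm.cJ_n}, and the Fukasawa--Ugai averaging lemma (their Lemma C.2).

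The difference is in how the product is decomposed. The paper first splits $\psi^{i,j}_{n,2}$ into the far part $\int_0^{\underline s} f\,\rd W$ and the last-cell part $\int_{\underline s}^s K\,\rd W$, forms the four products (I)--(IV), and only then applies \eqref{eq.ItoProduct Rule}. The far$\times$local terms (II), (III) and the martingale parts of (IV) are then killed by exact conditioning on $\cF_{\underline s}$, since the region $v<\underline s$ contributes exactly zero, plus a shrinking-measure argument. As a result, Lemma \ref{lm.cB_n} is only needed for the far$\times$far piece.

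You instead apply the It\^o product rule once to the whole piecewise kernel $G_n$. This is more compact (one bracket term, one martingale term), but your covariance bound must then also handle the mixed overlap $n^{2\alpha-1}\int_{\underline v}^{v}|K(v-u)|\,|K(s-u)-K(\underline s-\underline u)|\,\rd u$, which the paper only treats inside the proof of \eqref{lm.covV(ii)}. In your setting this is easy. For fixed $v<s$ and large $n$, Cauchy--Schwarz gives $\int_0^v|G_n(s,u)G_n(v,u)|\,\rd u\le \big(C\int_{\lfloor ns\rfloor-nv}^{\infty}\Psi(y)\,\rd y\big)^{1/2}\big(\int_0^v|G_n(v,u)|^2\,\rd u\big)^{1/2}\to0$, with the second factor bounded uniformly in $n$. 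This covers the far and last-cell pieces at once, so the separate handling you anticipate in (a) is not actually needed.
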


\begin{proof}
Based on \eqref{eq.momentBounded} and the fact that $\sup_{s \in [0, T]} |E_{\alpha} (A s^{\alpha}) -E_{\alpha} (A\underline{s}^{\alpha})| \leq C n^{-\alpha}$, one has \eqref{eq.EX0}. Using \eqref{lm.|K-K|H}, \eqref{lm.|K-K|Z}, \eqref{eq.momentBounded} and the linear growth conditions of $b$ and $\sigma$ deduces 
\begin{align*}
\| \psi^{i,j}_{n, 1}(s) \|_{L^2} \leq C n^{-\alpha} \qquad \mbox{and} \qquad 
\| \psi^{i,j}_{n, 2}(s) \|_{L^2} \leq C n^{\frac{1}{2}-\alpha}, 
\end{align*}
which imply \eqref{eq.psi_{1, s}} and \eqref{eq.psi_{2, s}}, respectively.

Next we prove \eqref{eq.psi_{2, s}^2}. To facilitate the analysis, we rewrite $\psi^{i,j}_{n, 2}(s)$ as
\begin{align*}
\psi^{i,j}_{n, 2}(s)
= \sum_{\ell=1}^{m} \int_{0}^{\underline{s}} f_{\ell}^{i,j}(s, u) \rd W_{u}^{\ell} + \sum_{\ell=1}^{m} \int_{\underline{s}}^{s} K^{i}_{j}(s-u) \sigma_{\ell}^{j}(\widehat{X}_{\underline{s}}) \rd W_{u}^{\ell}, 
\end{align*}
where $ f_{\ell}^{i,j}(s, u) := \big( K^{i}_{j}(s-u) - K^{i}_{j}(\underline{s} - \underline{u}) \big) \sigma_{\ell}^{j}(\widehat{X}_{\underline{u}})$. It helps us to make following the decomposition
\begin{align*}
n^{2\alpha-1} \int_0^t \psi^{i_1,j_1}_{n,2}(s) \psi^{i_2,j_2}_{n,2}(s) \rd s 
= \sum_{\ell,q=1}^{m} \left((\mathrm{I})_{\ell,q}^{i_1,j_1,i_2,j_2} + (\mathrm{II})_{\ell,q}^{i_1,j_1,i_2,j_2} + (\mathrm{III})_{\ell,q}^{i_1,j_1,i_2,j_2} + (\mathrm{IV})_{\ell,q}^{i_1,j_1,i_2,j_2}\right), 
\end{align*} 
where 
\begin{align*}
& (\mathrm{I})_{\ell,q}^{i_1,j_1,i_2,j_2} := n^{2\alpha-1} \int_{0}^{t} \left( \int_{0}^{\underline{s}} f_{\ell}^{i_1,j_1}(s, u) \rd W_{u}^{\ell} \right) \left( \int_{0}^{\underline{s}} f_{q}^{i_2,j_2}(s, u) \rd W_{u}^{q} \right) \rd s, \\ 
& (\mathrm{II})_{\ell,q}^{i_1,j_1,i_2,j_2} := n^{2\alpha-1} \int_{0}^{t} \sigma_{q}^{j_{2}}(\widehat{X}_{\underline{s}}) \left( \int_{0}^{\underline{s}} f_{\ell}^{i_1,j_1}(s, u) \rd W_{u}^{\ell} \right) \left( \int_{\underline{s}}^{s} K^{i_2}_{j_2}(s-u) \rd W_{u}^{q} \right) \rd s, \\ 
& (\mathrm{III})_{\ell,q}^{i_1,j_1,i_2,j_2} := n^{2\alpha-1} \int_0^t \sigma_{\ell}^{j_1}(\widehat{X}_{\underline{s}}) \left( \int_0^{\underline{s}} f_{q}^{i_2,j_2}(s, u) \rd W_u^q \right) \left( \int_{\underline{s}}^s K^{i_1}_{j_1}(s-u) \rd W_u^{\ell} \right) \rd s, \\ 
& (\mathrm{IV})_{\ell,q}^{i_1,j_1,i_2,j_2} := n^{2\alpha-1} \int_0^t \sigma_{\ell}^{j_1}(\widehat{X}_{\underline{s}}) \sigma_q^{j_2}(\widehat{X}_{\underline{s}}) \left( \int_{\underline{s}}^s K^{i_1}_{j_1}(s-u) \rd W_u^{\ell} \right) \left( \int_{\underline{s}}^s K^{i_2}_{j_2}(s-u) \rd W_u^q \right) \rd s.
\end{align*} 
One can claim that 
\begin{align}
&(\mathrm{I})_{\ell,q}^{i_1,j_1,i_2,j_2} 
\xrightarrow[n \to \infty]{L^2(\Omega)} 
\begin{cases} 
\kappa_{1,1}^{2}(\alpha) \int_{0}^{t} \sigma_{\ell}^{i_1}(X_{s}) \sigma_{\ell}^{i_2}(X_{s}) \rd s, & \ell = q,\,i_1=j_1,\text{ and }i_2=j_2, \\ 
0, &\text {otherwise}, 
\end{cases} \label{eq.(17)} \\
&(\mathrm{II})_{\ell,q}^{i_1,j_1,i_2,j_2} 
\xrightarrow[n \to \infty]{L^2(\Omega)} 
0, \label{eq.(18)} \\
&(\mathrm{III})_{\ell,q}^{i_1,j_1,i_2,j_2} 
\xrightarrow[n \to \infty]{L^2(\Omega)} 
0, \label{eq.(19)} \\
&(\mathrm{IV})_{\ell,q}^{i_1,j_1,i_2,j_2} 
\xrightarrow[n \to \infty]{L^2(\Omega)} 
\begin{cases}
\frac{1}{\Gamma^{2}(\alpha)2\alpha(2\alpha-1)} \int_{0}^{t} \sigma_{\ell}^{i_1}(X_{s}) \sigma_{\ell}^{i_2}(X_{s}) \rd s, & \ell = q,\,i_1=j_1,\text{ and }i_2=j_2, \\
0, &\text {otherwise}. 
\end{cases} \label{eq.(20)}
\end{align}
Here, the constant $\kappa_{1,1}^{2}(\alpha)$ is defined by
\begin{align} \label{eq.kappa_1,1^2}
\kappa_{1,1}^{2}(\alpha) 
= \frac{1}{\Gamma^{2}(\alpha)}\int_{0}^{1} \int_{0}^{\infty} ((y + x)^{\alpha-1}- \left( \lceil y \rceil \right)^{\alpha-1})^{2} \rd y \rd x. 
\end{align}
Further, it follows from \eqref{eq.(17)}--\eqref{eq.(20)} that \eqref{eq.psi_{2, s}^2} holds. Thus, it remains to show that \eqref{eq.(17)}--\eqref{eq.(20)} hold.

\underline{\emph{Proof of \eqref{eq.(17)}}}. 
According to It\^o’s product rule, for any progressively measurable and square integrable processes $h_{1}$ and $h_{2}$, it holds that 
\begin{align} 
&\ \left( \int_{s}^{t}h_{1}(u) \rd W_{u}^{\ell} \right) \left( \int_{s}^{t}h_{2}(u) \rd W_{u}^{q} \right) 
= \int_{s}^{t} \left( \int_{s}^{u}h_{1}(r) \rd W_{r}^{\ell} \right)h_{2}(u) \rd W_{u}^{q} \nonumber\\ 
&\qquad\qquad + \int_{s}^{t} \left( \int_{s}^{u}h_{2}(r) \rd W_{r}^{q} \right)h_{1}(u) \rd W_{u}^{\ell} + \int_{s}^{t}h_{1}(u)h_{2}(u) \rd \< W^{\ell}, W^q \>_{u}. \label{eq.ItoProduct Rule}
\end{align}
Then, $(\mathrm{I})_{\ell,q}^{i_1,j_1,i_2,j_2}$ can be split into 
\begin{align*}
(\mathrm{I})_{\ell,q}^{i_1,j_1,i_2,j_2}
= (\mathrm{I})_{\ell,q, 1}^{i_1,j_1,i_2,j_2} + (\mathrm{I})_{\ell,q, 2}^{i_1,j_1,i_2,j_2} + (\mathrm{I})_{\ell,q, 3}^{i_1,j_1,i_2,j_2}
\end{align*}
with 
\begin{align*}
&(\mathrm{I})_{\ell,q, 1}^{i_1,j_1,i_2,j_2} := n^{2\alpha-1} \int_0^t \int_0^{\underline{s}} \left( \int_0^u f_{\ell}^{i_1,j_1}(s, r) \rd W_r^{\ell} \right) f_{q}^{i_2,j_2}(s, u) \rd W_u^q \rd s, \\
&(\mathrm{I})_{\ell,q, 2}^{i_1,j_1,i_2,j_2} := n^{2\alpha-1} \int_0^t \int_0^{\underline{s}} \left( \int_0^u f_{q}^{i_2,j_2}(s, r) \rd W_r^q \right) f_{\ell}^{i_1,j_1}(s, u) \rd W_u^{\ell} \rd s, \\ 
&(\mathrm{I})_{\ell,q, 3}^{i_1,j_1,i_2,j_2} := n^{2\alpha-1} \int_0^t \int_0^{\underline{s}} f_{\ell}^{i_1,j_1}(s, u) f_{q}^{i_2,j_2}(s, u) \rd \< W^{\ell}, W^q \>_u \rd s. 
\end{align*} 
For $(\mathrm{I})_{\ell,q, 1}^{i_1,j_1,i_2,j_2}$, setting 
\begin{align*}
D_{1, s}^{\ell,q} := n^{2\alpha-1} \int_0^{\underline{s}} \left( \int_0^u f_{\ell}^{i_1,j_1}(s, r) \rd W_r^{\ell} \right) f_{q}^{i_2,j_2}(s, u) \rd W_u^q 
\end{align*}	
and applying Fubini's theorem indicate that 
\begin{align*}
\hE \left[\left|(\mathrm{I})_{\ell,q, 1}^{i_1,j_1,i_2,j_2}\right|^2\right] = \hE \bigg[ \int_0^t \int_0^t D_{1, s}^{\ell,q} D_{1, v}^{\ell,q} \rd v \rd s \bigg] = 2 \int_0^t \int_0^s \hE [ D_{1, s}^{\ell,q} D_{1, v}^{\ell,q}] \rd v \rd s. 
\end{align*}
For the integrand of the right integral, using \eqref{eq.ItoProduct Rule} and Fubini's theorem yields 
\begin{align} 
&\ \hE \big[ D_{1, s}^{\ell,q} D_{1, v}^{\ell,q} \big] = \hE \big[ \hE[D_{1, s}^{\ell,q} D_{1, v}^{\ell,q}| \cF_{\underline{v}}] \big] \nonumber\\
&= n^{4\alpha-2} \hE \left[\int_0^{\underline{v}} \left( \int_0^u f_{\ell}^{i_1,j_1}(s, r) \rd W_r^{\ell} \right) f_{q}^{i_2,j_2}(s, u) \rd W_u^q\int_0^{\underline{v}} \left( \int_0^u f_{\ell}^{i_1,j_1}(v, r) \rd W_r^{\ell} \right) f_{q}^{i_2,j_2}(v, u) \rd W_u^q\right]\nonumber\\
&= n^{4\alpha-2} \hE \bigg[ \int_0^{\underline{v}} \bigg( \int_0^u f_{\ell}^{i_1,j_1}(s, r) \rd W_r^{\ell} \bigg) \bigg( \int_0^u f_{\ell}^{i_1,j_1}(v, r) \rd W_r^{\ell} \bigg) f_{q}^{i_2,j_2}(s, u) f_{q}^{i_2,j_2}(v, u) \rd u \bigg] \nonumber \\ 
&= n^{4\alpha-2} \int_0^{\underline{v}} \big( K^{i_2}_{j_2}(s-u)- K^{i_2}_{j_2}(\underline{s}-\underline{u}) \big) \big( K^{i_2}_{j_2}(v-u)- K^{i_2}_{j_2}(\underline{v}-\underline{u}) \big)\tilde{f}(u) \rd u \label{eq.E[DD]}
\end{align}
with 
\begin{align*}
\tilde{f}(u) := \hE \left[ \left| \sigma_{q}^{j_{2}} \left(\widehat{X}_{\underline{u}} \right) \right|^{2} \left( \int_{0}^{u} f_{\ell}^{i_1,j_1}(s, r) \rd W_{r}^{\ell} \right) \left( \int_{0}^{u} f_{\ell}^{i_1,j_1}(v, r) \rd W_{r}^{\ell} \right) \right]. 
\end{align*} 
For $p \geq 2$, $0 \leq u \leq \underline{s}$ with $s \leq T$, there is a constant $C>0$ independent of $n$, $u$ and $s$ such that 
\begin{align} 
\bigg\| \int_{0}^{u} f_{\ell}^{i_1,j_1}(s, r) \rd W_{r}^{\ell} \bigg\|_{L^{p}} 
\leq C \left\| \int_{0}^{\underline{s}} \left| K^{k}_{j}(s-r)- K^{k}_{j} \left({\underline{s}}-{\underline{r}} \right) \right|^{2} \left| \sigma_{\ell}^{j} (\widehat{X}_{\underline{r}}) \right|^{2} \rd r \right\|_{L^{p / 2}}^{\frac{1}{2}} 
\leq C n^{\frac{1}{2}-\alpha} \label{eq.|f|}. 
\end{align}
Then using H\"older's inequality yields 
\begin{align*}
|\tilde{f}(u)| 
\leq \left\| \left| \sigma_{q}^{j_{2}} (\widehat{X}_{\underline{u}} ) \right|^{2} \right\|_{L^{2}} \left\| \int_{0}^{u} f_{\ell}^{i_1,j_1}(s,r) \rd W_{r}^{\ell} \right\|_{L^{4}} \left\| \int_{0}^{u} f_{\ell}^{i_1,j_1}(v, r) \rd W_{r}^{\ell} \right\|_{L^{4}} 
\leq C n^{1-2\alpha},
\end{align*}
which together with \eqref{eq.E[DD]} implies that 
\begin{align*}
\left| \hE [ D_{1, s}^{\ell,q} D_{1, v}^{\ell,q} ] \right| 
\leq C n^{2\alpha-1} \int_{0}^{\underline{v}} \left| \big( K^{i_2}_{j_2}(s-u)- K^{i_2}_{j_2}(\underline{s}-\underline{u}) \big) \big( K^{i_2}_{j_2}(v-u)- K^{i_2}_{j_2}(\underline{v}-\underline{u}) \big) \right| \rd u.
\end{align*}
Then, Lemma \ref{lm.cB_n} leads to $\hE[ D_{1, s}^{\ell,q} D_{1, v}^{\ell,q}] \xrightarrow[n \to \infty ]{} 0$. Therefore, it follows from the bounded convergence theorem (BCT) w.r.t.\ $ \rd v\otimes \rd s$ that $(\mathrm{I})_{\ell,q, 1}^{i_1,j_1,i_2,j_2} \xrightarrow[n \to \infty ]{L^2(\Omega)} 0$. Similarly, it holds that $(\mathrm{I})_{\ell,q, 2}^{i_1,j_1,i_2,j_2} \xrightarrow[n \to \infty ]{L^2(\Omega)} 0$.

Next, we handle $(\mathrm{I})_{\ell,q, 3}^{i_1,j_1,i_2,j_2}$. For the case $\ell \neq q$,
\begin{align} \label{eq.I_ell,q,3=0}
(\mathrm{I})_{\ell,q, 3}^{i_1,j_1,i_2,j_2} = n^{2\alpha-1} \int_0^t \int_0^{\underline{s}} f_{\ell}^{i_1,j_1}(s, u) f_{q}^{i_2,j_2}(s, u) \rd \< W^{\ell}, W^q \>_u \rd s=0.
\end{align}
For the case $\ell = q$, applying the change of variables $y = \lfloor ns \rfloor - nu$ yields
\small{
\begin{align*}
(\mathrm{I})_{\ell, \ell, 3}^{i_1,j_1,i_2,j_2} 
&= n^{2\alpha-1} \int_{0}^{t} \int_0^{\underline{s}} f_{\ell}^{i_1,j_1}(s, u) f_{\ell}^{i_2,j_2}(s, u) \rd u \rd s \\ 
&= n^{2\alpha-1} \int_0^t \int_0^{\underline{s}} \left( K^{i_1}_{j_1}(s-u)- K^{i_1}_{j_1} (\underline{s}-\underline{u}) \right) \left( K^{i_2}_{j_2}(s-u)- K^{i_2}_{j_2} (\underline{s}-\underline{u}) \right) \sigma_{\ell}^{j_1}(\widehat{X}_{\underline{u}}) \sigma_{\ell}^{j_2}(\widehat{X}_{\underline{u}}) \rd u \rd s \\ 
&= \int_0^t \int_0^{ \lfloor ns \rfloor} (\cJ_n)^{i_1,i_2}_{j_1,j_2}(s,y) \sigma_{\ell}^{j_1}(\widehat{X}_{\frac{ \lfloor ns \rfloor + \lfloor-y \rfloor}{n}}) \sigma_{\ell}^{j_2}(\widehat{X}_{\frac{ \lfloor ns \rfloor + \lfloor -y \rfloor}{n}}) \rd y \rd s, 
\end{align*}}
where $(\cJ_n)^{i_1,i_2}_{j_1,j_2}(s,y)$ is defined in \eqref{eq.F^n(s,y)}. In view of H\"older's inequality, \eqref{eq.lips}, \eqref{eq.momentBounded} and \eqref{X-hatX_Lp}, it follows that
\begin{align} \label{eq.sigma^2-sigma^2}
&\ \big\| \sigma_{\ell}^{j_{1}}(\widehat{X}_{\frac{ \lfloor ns \rfloor + \lfloor-y \rfloor}{n}}) \sigma_{\ell}^{j_{2}}(\widehat{X}_{\frac{ \lfloor ns \rfloor+ \lfloor-y \rfloor}{n}})- \sigma_{\ell}^{j_1}(X_s) \sigma_{\ell}^{j_2}(X_s) \big\|_{L^{2}} \nonumber\\ 
&\leq \big\| \sigma_{\ell}^{j_1}(\widehat{X}_{\frac{ \lfloor ns \rfloor + \lfloor-y \rfloor}{n}}) \big(\sigma_{\ell}^{j_2}(\widehat{X}_{\frac{ \lfloor ns \rfloor + \lfloor-y \rfloor}{n}})- \sigma_{\ell}^{j_2}(X_s)\big)\big\|_{L^2} 
+ \big\| \big(\sigma_{\ell}^{j_1}(\widehat{X}_{\frac{ \lfloor ns \rfloor + \lfloor-y \rfloor}{n}})- \sigma_{\ell}^{j_1}(X_s)\big) \sigma_{\ell}^{j_2}(X_s) \big\|_{L^2}\nonumber \\ 
&\leq \big\| \sigma_{\ell}^{j_1}(\widehat{X}_{\frac{ \lfloor ns \rfloor + \lfloor-y \rfloor}{n}}) \big\|_{L^4} \big\| \sigma_{\ell}^{j_2}(\widehat{X}_{\frac{ \lfloor ns \rfloor + \lfloor-y \rfloor}{n}})
- \sigma_{\ell}^{j_2}(X_s) \big\|_{L^4} 
+ \big\| \sigma_{\ell}^{j_2}(X_s) \big\|_{L^4} \big\| \sigma_{\ell}^{j_1}(\widehat{X}_{\frac{ \lfloor ns \rfloor + \lfloor-y \rfloor}{n}})- \sigma_{\ell}^{j_1}(X_s) \big\|_{L^4} \nonumber \\ 
&\leq C \Big( \big|\tfrac{ \lfloor ns \rfloor + \lfloor-y \rfloor}{n} - s \big|^{\alpha-\frac{1}{2}} + n^{\frac{1}{2}-\alpha} \Big) 
\leq C \big( (y+2)^{\alpha-\tfrac{1}{2}} + 1 \big) n^{\frac{1}{2}-\alpha} \xrightarrow[n \to \infty]{} 0. 
\end{align}
Moreover, it follows from the linear growth of $\sigma$, \eqref{eq.momentBounded} and \eqref{eq.cJn<Psi} that
\begin{align*}
\int_0^t \int_0^{ \lfloor ns \rfloor} \big|(\cJ_n)^{i_1,i_2}_{j_1,j_2}(s,y)\big| \big\| \sigma_{\ell}^{j_{1}}(\widehat{X}_{\frac{ \lfloor ns \rfloor + \lfloor-y \rfloor}{n}}) \sigma_{\ell}^{j_{2}}(\widehat{X}_{\frac{ \lfloor ns \rfloor+ \lfloor-y \rfloor}{n}})- \sigma_{\ell}^{j_1}(X_s) \sigma_{\ell}^{j_2}(X_s)\big\|_{L^2} \rd y \rd s 
 \leq C. 
\end{align*}
By \eqref{eq.cJn<Psi} and \eqref{eq.sigma^2-sigma^2}, the DCT w.r.t.\ $ \rd y\otimes \rd s$ implies that 
\begin{align} \label{eq.Fnsigma^2-sigma^2}
 \int_0^t \int_0^{ \lfloor ns \rfloor} (\cJ_n)^{i_1,i_2}_{j_1,j_2}(s,y)\big( \sigma_{\ell}^{j_{1}}(\widehat{X}_{\frac{ \lfloor ns \rfloor + \lfloor-y \rfloor}{n}}) \sigma_{\ell}^{j_{2}}(\widehat{X}_{\frac{ \lfloor ns \rfloor+ \lfloor-y \rfloor}{n}})- \sigma_{\ell}^{j_1}(X_s) \sigma_{\ell}^{j_2}(X_s)\big) \rd y \rd s\xrightarrow[n \to \infty]{L^2(\Omega)} 0, 
\end{align}
which together with Lemma \ref{lm.cJ_n} indicates that $(\mathrm{I})_{\ell, \ell, 3}^{i_1,j_1,i_2,j_2}$ and 
\begin{align*}
\widetilde{(\mathrm{I})}_{\ell, \ell, 3}^{i_1,j_1,i_2,j_2} 
:= \int_0^t \int_0^{\infty} (\cG_n)^{i_1,i_2}_{j_1,j_2}(s,y) \sigma_{\ell}^{j_1}(X_{s}) \sigma_{\ell}^{j_2}(X_{s}) \rd y \rd s
\end{align*}
have the same limit in $L^2(\Omega)$ as $n \to \infty$. In addition, an application of \cite[Lemma C.2]{FukasawaUgai2023} with \eqref{eq.cJn<Psi} gives
\begin{align*}
\widetilde{(\mathrm{I})}_{\ell, \ell, 3}^{i_1,j_1,i_2,j_2} 
\xrightarrow[n \to \infty]{L^2(\Omega)} 
\begin{cases}
\kappa_{1,1}^{2}(\alpha) \int_0^t \ \sigma_{\ell}^{j_1}(X_{s}) \sigma_{\ell}^{j_2}(X_{s}) \rd s, & i_1 = j_1 \text { and } i_2 = j_2, \\
0, &\text {otherwise}, 
\end{cases}
\end{align*}
where the constant $\kappa_{1,1}^{2}(\alpha)$ is defined in \eqref{eq.kappa_1,1^2}. Recalling \eqref{eq.I_ell,q,3=0} completes the proof of \eqref{eq.(17)}.

\underline{\emph{Proof of \eqref{eq.(18)}}}.
We write 
\begin{align*}
(\mathrm{II})_{\ell,q}^{i_1,j_1,i_2,j_2} 
= \int_0^t \sigma_q^{j_2}(\widehat{X}_{\underline{s}}) D_{2, s}^{\ell,q} \rd s 
\quad \mbox{with} \quad 
D_{2, s}^{\ell,q} 
:= n^{2\alpha-1} \left( \int_0^{\underline{s}} f_{\ell}^{i_1,j_1}(s, u) \rd W_u^{\ell} \right) \left( \int_{\underline{s}}^s K^{i_2}_{j_2}(s-u) \rd W_u^q \right).
\end{align*} 
Then, using Fubini's theorem yields 
\begin{align*}
&\ \hE \left[ \left| (\mathrm{II})_{\ell,q}^{i_1,j_1,i_2,j_2} \right|^2 \right] 
= \hE \left[ \int_0^t \int_0^t \sigma_q^{j_2}(\widehat{X}_{\underline{s}}) D_{2, s}^{\ell,q} \sigma_{q}^{j_2}(\widehat{X}_{\underline{v}}) D_{2, v}^{\ell,q} \rd v \rd s \right] \\ 
&= 2 \int_0^t \int_0^{\underline{s}} \hE [ \sigma_q^{j_2}(\widehat{X}_{\underline{s}}) D_{2, s}^{\ell,q} \sigma_q^{j_2}(\widehat{X}_{\underline{v}}) D_{2, v}^{\ell,q}] \rd v \rd s + 2 \int_0^t \int_{\underline{s}}^s \hE [ \sigma_q^{j_2}(\widehat{X}_{\underline{s}}) D_{2, s}^{\ell,q} \sigma_q^{j_2}(\widehat{X}_{\underline{v}}) D_{2, v}^{\ell,q}] \rd v \rd s \\ 
&=: 2(\mathrm{II})_{\ell,q,1}^{i_1,j_1,i_2,j_2} + 2(\mathrm{II})_{\ell,q,2}^{i_1,j_1,i_2,j_2}.
\end{align*}
On one hand, it follows from 
\begin{align*}
\hE [ D_{2, s}^{\ell,q} | \cF_{\underline{s}}] = \int_0^{\underline{s}} f_{\ell}^{i_1,j_1}(s, u) \rd W_u^{\ell} \, \hE \left[ \int_{\underline{s}}^s K^{i_2}_{j_2}(s-u) \rd W_u^q \big| \cF_{\underline{s}} \right] = 0 
\end{align*}
that for all $n \in \hN_+$, 
\begin{align*}
(\mathrm{II})_{\ell,q,1}^{i_1,j_1,i_2,j_2} 
= \int_0^t \int_{0}^{\underline{s}} \hE \left[ \sigma_q^{j_2}(\widehat{X}_{\underline{s}}) \sigma_q^{j_2}(\widehat{X}_{\underline{v}}) D_{2,v}^{\ell,q} \hE [ D_{2, s}^{\ell,q} 
| \cF_{\underline{s}}] \right] \rd v \rd s 
= 0. 
\end{align*} 
On the other hand, using H\"older's inequality, \eqref{eq.|f|}, the BDG inequality and \eqref{eq.|K|} shows that there exists $C>0$ independent of $n$ such that for all $s \in [0,T]$, 
\begin{align*}
 \big\| D_{2, s}^{\ell,q} \big\|_{L^4} 
&\leq n^{2\alpha-1} \left\| \int_0^{\underline{s}} f_{\ell}^{i_1,j_1}(s, u) \rd W_u^{\ell} \right\|_{L^8} \left\| \int_{\underline{s}}^s K^{i_2}_{j_2}(s-u) \rd W_u^q \right\|_{L^8} \\ 
&\leq C n^{\alpha-\frac{1}{2}} \left( \int_{\underline{s}}^s \left| K^{i_2}_{j_2}(s-u) \right|^2 \rd u \right)^{\frac12} \leq C, 
\end{align*} 
which together with the linear growth of $\sigma$ and \eqref{eq.momentBounded} indicates 
\begin{align*}
\left| \hE \left[ \sigma_{q}^{{j_{2}}}(\widehat{X}_{\underline{s}}) D_{2, s}^{\ell,q} \sigma_{q}^{{j_{2}}}(\widehat{X}_{{\underline{v}}}) D_{2, v}^{\ell,q} \right] \right|
\leq \big\| \sigma_{q}^{{j_{2}}}(\widehat{X}_{\underline{s}}) \big\|_{L^{4}} \big\| D_{2, s}^{\ell,q} \big\|_{L^{4}} \big\| \sigma_{q}^{{j_{2}}}(\widehat{X}_{{\underline{v}}}) \big\|_{L^{4}} \big\| D_{2, v}^{\ell,q} \big\|_{L^{4}} 
\leq C. 
\end{align*} 
Then, it follows from the BCT that 
\begin{align*}
\lim_{n \to \infty} (\mathrm{II})_{\ell,q,2}^{i_1,j_1,i_2,j_2} = \int_0^t \int_0^t \lim_{n \to \infty}1_{(\underline{s}, s)}(v) \hE [ \sigma_q^{j_2}(\widehat{X}_{\underline{s}}) D_{2, s}^{\ell,q} \sigma_q^{j_2}(\widehat{X}_{\underline{v}}) D_{2, v}^{\ell,q}] \rd v \rd s = 0. 
\end{align*} 
Thus, one has $\lim_{n \to \infty}\mathbb{E}[|(\mathrm{II})_{\ell,q}^{i_1,j_1,i_2,j_2}|^2] = 0$, thereby verifying \eqref{eq.(18)}. We omit the proof of \eqref{eq.(19)} since it follows from the same arguments used in the proof of \eqref{eq.(18)}.

\underline{\emph{Proof of \eqref{eq.(20)}}}. 
In view of \eqref{eq.ItoProduct Rule}, one has 
\begin{align} \label{eq.iv_lq}
(\mathrm{IV})_{\ell,q}^{i_1,j_1,i_2,j_2} 
= (\mathrm{IV})_{\ell,q,1}^{i_1,j_1,i_2,j_2} + (\mathrm{IV})_{\ell,q,2}^{i_1,j_1,i_2,j_2} + (\mathrm{IV})_{\ell,q,3}^{i_1,j_1,i_2,j_2}
\end{align}
with 
\begin{align*} 
&(\mathrm{IV})_{\ell,q,1}^{i_1,j_1,i_2,j_2} 
:= n^{2\alpha-1} \int_0^t \sigma_{\ell}^{j_1}(\widehat{X}_{\underline{s}}) \sigma_q^{j_2}(\widehat{X}_{\underline{s}}) \bigg[ \int_{\underline{s}}^s \biggl( \int_{\underline{s}}^u K^{i_1}_{j_1}(s-r) \rd W_r^{\ell} \biggr) K^{i_2}_{j_2}(s-u) \rd W_u^q \bigg] \rd s, \\ 
&(\mathrm{IV})_{\ell,q,2}^{i_1,j_1,i_2,j_2} 
:= n^{2\alpha-1} \int_0^t \sigma_{\ell}^{j_1}(\widehat{X}_{\underline{s}}) \sigma_q^{j_2}(\widehat{X}_{\underline{s}}) \bigg[ \int_{\underline{s}}^s \left( \int_{\underline{s}}^u K^{i_2}_{j_2}(s-r) \rd W_r^q \right) K^{i_1}_{j_1}(s-u) \rd W_u^{\ell} \bigg] \rd s, \\
&(\mathrm{IV})_{\ell,q,3}^{i_1,j_1,i_2,j_2} 
:= n^{2\alpha-1} \int_0^t \sigma_{\ell}^{j_1}(\widehat{X}_{\underline{s}}) \sigma_q^{j_2}(\widehat{X}_{\underline{s}}) \bigg[ \int_{\underline{s}}^s K^{i_1}_{j_1}(s-u) K^{i_2}_{j_2}(s-u) \rd \< W^{\ell}, W^q \>_u \bigg] \rd s.
\end{align*}
When $\ell \neq q$, $(\mathrm{IV})_{\ell,q,3}^{i_1,j_1,i_2,j_2} = 0$. While $\ell = q$, it holds that 
\begin{align*}
(\mathrm{IV})_{\ell,\ell,3}^{i_1,j_1,i_2,j_2}
&= n^{2\alpha-1} \int_0^t \sigma_{\ell}^{j_1}(\widehat{X}_{\underline{s}}) \sigma_{\ell}^{j_2}(\widehat{X}_{\underline{s}}) \bigg[ \int_{\underline{s}}^s K^{i_1}_{j_1}(s-u) K^{i_2}_{j_2}(s-u) \rd u \bigg] \rd s \\
&= \int_0^t \sigma_{\ell}^{j_1}(\widehat{X}_{\underline{s}}) \sigma_{\ell}^{j_2}(\widehat{X}_{\underline{s}}) \bigg[ \int_0^{ns- \lfloor ns \rfloor} \cK^{2}(v) \cE^{i_1}_{j_1}\left(\frac{v}{n}\right) \cE^{i_2}_{j_2}\left(\frac{v}{n}\right) \rd v \bigg] \rd s, 
\end{align*}
where the last step used $K(\cdot) = \cK(\cdot) \cE(\cdot)$, \eqref{eq.def:k&e} and 
\begin{align*}
\int_{\underline{s}}^s \cK^{2}(s-u) \cE^{i_1}_{j_1}(s-u) \cE^{i_2}_{j_2}(s-u) \rd u 
= \int_0^{s-\underline{s}} \cK^{2}(r) \cE^{i_1}_{j_1}(r) \cE^{i_2}_{j_2}(r) \rd r 
= n^{1-2\alpha} \int_0^{ns- \lfloor ns \rfloor} \cK^{2}(v) \cE^{i_1}_{j_1}\left(\frac{v}{n}\right) \cE^{i_2}_{j_2}\left(\frac{v}{n}\right) \rd v. 
\end{align*}

We define the function $\cY^{i_1,i_2}_{j_1,j_2}(\cdot) := \delta_{i_1,j_1} \delta_{i_2,j_2} \frac{1}{\Gamma^{2}(\alpha)} \cK^{2}(\cdot)$ (where $\delta$ is the Kronecker delta) and deduce that 
\begin{align*}
&\ \left\| \int_0^t \sigma_{\ell}^{j_1}(\widehat{X}_{\underline{s}}) \sigma_{\ell}^{j_2}(\widehat{X}_{\underline{s}}) \int_0^{ns- \lfloor ns \rfloor} \left(\cK^{2}(v) \cE^{i_1}_{j_1}\left(\frac{v}{n}\right) \cE^{i_2}_{j_2}\left(\frac{v}{n}\right) -\cY^{i_1,i_2}_{j_1,j_2}(v) \right) \rd v \rd s \right\|_{L^2} \\ 
&\leq \int_0^t \left\| \sigma_{\ell}^{j_1}(\widehat{X}_{\underline{s}}) \sigma_{\ell}^{j_2}(\widehat{X}_{\underline{s}}) \right\|_{L^2} \int_0^{1} 1_{(0, ns- \lfloor ns \rfloor)}(v) \left|\cK^{2}(v) \cE^{i_1}_{j_1}\left(\frac{v}{n}\right) \cE^{i_2}_{j_2}\left(\frac{v}{n}\right) - \cY^{i_1,i_2}_{j_1,j_2}(v) \right| \rd v \rd s\\
&\leq C \int_0^t \int_0^{1} 1_{(0, ns- \lfloor ns \rfloor)}(v) \left|\cK^{2}(v) \cE^{i_1}_{j_1}\left(\frac{v}{n}\right) \cE^{i_2}_{j_2}\left(\frac{v}{n}\right) - \cY^{i_1,i_2}_{j_1,j_2}(v) \right| \rd v \rd s. 
\end{align*} 
By \eqref{eq.cU_Holder}, it holds that for all $v \in (0,T]$, 
\begin{align*}
\left|\cK^{2}(v) \cE^{i_1}_{j_1}\left(\frac{v}{n}\right) \cE^{i_2}_{j_2}\left(\frac{v}{n}\right) - \cY^{i_1,i_2}_{j_1,j_2}(v) \right|
\leq C \cK^{2}(v),
\end{align*} 
where $C>0$ is independent of $n$ and $v$. Note that $\int_0^t \int_0^{1} 1_{(0, ns- \lfloor ns \rfloor)}(v) \cK^{2}(v) \rd v \rd s\leq C$. And it follows from \eqref{eq.limeij} that 
\begin{align*}
\lim_{n \to \infty}\left|\cK^{2}(v) \cE^{i_1}_{j_1}\left(\frac{v}{n}\right) \cE^{i_2}_{j_2}\left(\frac{v}{n}\right) - \cY^{i_1,i_2}_{j_1,j_2}(v) \right|=0,
\end{align*}
which implies 
\begin{align*}
\lim_{n \to \infty}1_{{(0, ns- \lfloor ns \rfloor)}}(v)\, \left|\cK^{2}(v) \cE^{i_1}_{j_1}\left(\frac{v}{n}\right) \cE^{i_2}_{j_2}\left(\frac{v}{n}\right) - \cY^{i_1,i_2}_{j_1,j_2}(v) \right| = 0. 
\end{align*}
Thus, applying the DCT yields 
\begin{align*}
\lim_{n \to \infty} \int_0^t \int_0^{1} 1_{(0, ns- \lfloor ns \rfloor)}(v) \left|\cK^{2}(v) \cE^{i_1}_{j_1}\left(\frac{v}{n}\right) \cE^{i_2}_{j_2}\left(\frac{v}{n}\right) - \cY^{i_1,i_2}_{j_1,j_2}(v) \right| \rd v \rd s=0, 
\end{align*}
which indicates 
\begin{align} \label{eq.k_to_y}
\int_0^t \sigma_{\ell}^{j_1}(\widehat{X}_{\underline{s}}) \sigma_{\ell}^{j_2}(\widehat{X}_{\underline{s}}) \int_0^{1} 1_{(0, ns- \lfloor ns \rfloor)}(v) \left(\cK^{2}(v) \cE^{i_1}_{j_1}\left(\frac{v}{n}\right) \cE^{i_2}_{j_2}\left(\frac{v}{n}\right) -\cY^{i_1,i_2}_{j_1,j_2}(v) \right) \rd v \rd s \xrightarrow[n \to \infty]{L^2(\Omega)} 0. 
\end{align}
In light of the relation 
\begin{align*} 
\hE \left[ \int_0^T \left| \sigma_{\ell}^{i_1}(\widehat{X}_{\underline{s}}) \sigma_{\ell}^{i_2}(\widehat{X}_{\underline{s}})- \sigma_{\ell}^{i_1}(X_s) \sigma_{\ell}^{i_2}(X_s) \right|^2 \rd s \right] \xrightarrow[n \to \infty]{} 0, 
\end{align*}
one can also use \cite[Lemma C.2]{FukasawaUgai2023} to obtain 
\begin{align*}
&\ \int_0^t \sigma_{\ell}^{j_1}(\widehat{X}_{\underline{s}}) \sigma_{\ell}^{j_2}(\widehat{X}_{\underline{s}}) \int_0^{1} 1_{(0, ns- \lfloor ns \rfloor)}(v) \cY^{i_1,i_2}_{j_1,j_2}(v) \rd v \rd s \\
&\xrightarrow[n \to \infty]{L^{2}(\Omega)}
\begin{cases}
 \frac{1} {\Gamma^{2}(\alpha)2\alpha(2\alpha-1)} \int_0^t \sigma_{\ell}^{i_1}(X_{s}) \sigma_{\ell}^{i_2}(X_{s}) \rd s, &i_1=j_1 \text{ and } i_2=j_2,\\0, &\text{otherwise},
\end{cases}
\end{align*}
which together with \eqref{eq.k_to_y} shows 
\begin{align*}
&\int_0^t \sigma_{\ell}^{j_1}(\widehat{X}_{\underline{s}}) \sigma_{\ell}^{j_2}(\widehat{X}_{\underline{s}}) \int_0^{ns- \lfloor ns \rfloor} \cK^{2}(v) \cE^{i_1}_{j_1}\left(\frac{v}{n}\right) \cE^{i_2}_{j_2}\left(\frac{v}{n}\right) \rd v \rd s \notag\\
&\xrightarrow[n \to \infty]{L^{2}(\Omega)}
\begin{cases}
 \frac{1} {\Gamma^{2}(\alpha)2\alpha(2\alpha-1)} \int_0^t \sigma_{\ell}^{i_1}(X_{s}) \sigma_{\ell}^{i_2}(X_{s}) \rd s, &i_1=j_1 \text{ and } i_2=j_2,\\0, &\text{otherwise}. 
 \end{cases} 
\end{align*}
Therefore, one can conclude that 
\begin{align} \label{eq.limit_IV_3}
(\mathrm{IV})_{\ell,q,3}^{i_1,j_1,i_2,j_2} 
\xrightarrow[n \to \infty]{L^{2}(\Omega)}
\begin{cases}
 \frac{1} {\Gamma^{2}(\alpha)2\alpha(2\alpha-1)} \int_0^t \sigma_{\ell}^{i_1}(X_{s}) \sigma_{\ell}^{i_2}(X_{s}) \rd s, &\ell=q,\, i_1=j_1, \text{ and } i_2=j_2,\\
 0, &\text{otherwise}. 
 \end{cases} 
\end{align}

We turn to the first term on the right-hand side of \eqref{eq.iv_lq}. For brevity, we write 
\begin{align*}
(\mathrm{IV})_{\ell,q,1}^{i_1,j_1,i_2,j_2} = \int_0^t E^{j_1,j_2}_{\ell,q,s} D_{4,s}^{\ell,q} \rd s
\end{align*} 
with $E^{j_1,j_2}_{\ell,q,s} := \sigma_{\ell}^{j_1}(\widehat{X}_{\underline{s}}) \sigma_q^{j_2}(\widehat{X}_{\underline{s}})$, $D_{4,s}^{\ell,q} 
:= n^{2\alpha-1} \int_{\underline{s}}^s g_{\ell}(s, u) \rd W_u^q$, and $g_{\ell}(s, u) := K^{i_2}_{j_2}(s-u) \int_{\underline{s}}^u K^{i_1}_{j_1}(s-r) \rd W_r^{\ell}$. Using Fubini's theorem yields 
\begin{align} 
&\ \hE \left[ \left| (\mathrm{IV})_{\ell,q,1}^{i_1,j_1,i_2,j_2} \right|^{2}\right] 
= \hE \bigg[ \int_0^t \int_0^t E^{j_1,j_2}_{\ell,q,s} D_{4,s}^{\ell,q} E^{j_1,j_2}_{\ell,q,v} D_{4,v}^{\ell,q} \rd v \rd s \bigg] \notag \\ 
&= 2 \int_0^t \int_0^{\underline{s}} \hE \left[ E^{j_1,j_2}_{\ell,q,s} D_{4,s}^{\ell,q} E^{j_1,j_2}_{\ell,q,v} D_{4,v}^{\ell,q} \right] \rd v \rd s + 2 \int_0^t \int_{\underline{s}}^s \hE \left[ E^{j_1,j_2}_{\ell,q,s} D_{4,s}^{\ell,q} E^{j_1,j_2}_{\ell,q,v} D_{4,v}^{\ell,q} \right] \rd v \rd s \notag \\ 
&=: 2 (\mathrm{IV})_{\ell,q,1,1}^{i_1,j_1,i_2,j_2} + 2 (\mathrm{IV})_{\ell,q,1,2}^{i_1,j_1,i_2,j_2}. \label{eq.IV_1_split}
\end{align}
On one hand, using the facts that $E^{j_1,j_2}_{\ell,q,s}$ is $\cF_{\underline{s}}$-measurable and $\hE \big[D_{4,s}^{\ell,q}\, \big|\, \cF_{\underline{s}} \big] = \hE \big[D_{4,s}^{\ell,q} \big] = 0$ gets 
\begin{align} \label{eq.limit_IV_11}
(\mathrm{IV})_{\ell,q,1,1}^{i_1,j_1,i_2,j_2} 
= \int_0^t \int_0^{\underline{s}} \hE \left[ E^{j_1,j_2}_{\ell,q,s} E^{j_1,j_2}_{\ell,q,v} D_{4,v}^{\ell,q} \hE \big[D_{4,s}^{\ell,q}\, \big|\, \cF_{\underline{s}} \big] \right] \rd v \rd s = 0.
\end{align}
On the other hand, based on the fact that $\underline{v} = \underline{s}$ for $v \in (\underline{s}, s)$, one has 
\begin{align*}
(\mathrm{IV})_{\ell,q,1,2}^{i_1,j_1,i_2,j_2} 
&= \int_{0}^{t} \int_{0}^{t} 1_{(\underline{s}, s)}(v) \hE \left[ \big( E^{j_1,j_2}_{\ell,q,s} \big)^2 D_{4,s}^{\ell,q} D_{4,v}^{\ell,q} \right] \rd v \rd s \\
&= \int_{0}^{t} \int_{0}^{t} 1_{(\underline{s}, s)}(v) \hE \left[ \big( E^{j_1,j_2}_{\ell,q,s} \big)^2 \hE \big[ D_{4,s}^{\ell,q} D_{4,v}^{\ell,q} \, \big|\, \cF_{\underline{v}} \big] \right] \rd v \rd s. 
\end{align*} 
Here, by \eqref{eq.ItoProduct Rule}, the conditional expectation satisfies 
\begin{align*}
\hE \big[ D_{4,s}^{\ell,q} D_{4,v}^{\ell,q} \, \big|\, \cF_{\underline{v}} \big] 
&= n^{4\alpha-2} \hE \left[ \int_{\underline{s}}^{s} g_{\ell}(s, u) \rd W_{u}^{q} \int_{\underline{s}}^{s} 1_{(\underline{v}, v)}(u) g_{\ell}(v, u) \rd W_{u}^{q} \, \big|\, \cF_{\underline{v}} \right] \\
&= n^{4\alpha-2} \int_{\underline{v}}^{v} \hE \big[ g_{\ell}(s, u) g_{\ell}(v, u) \, \big|\, \cF_{\underline{v}} \big] \rd u, 
\end{align*} 
where 
\begin{align*}
\hE \big[ g_{\ell}(s, u) g_{\ell}(v, u) \, \big|\,\cF_{\underline{v}} \big] 
&= K^{i_2}_{j_2}(s-u) K^{i_2}_{j_2}(v-u) \hE \left[ \left( \int_{\underline{v}}^{u} K^{i_1}_{j_1}(s-r) \rd W_{r}^{\ell} \right) \left( \int_{\underline{v}}^{u} K^{i_1}_{j_1}(v-r) \rd W_{r}^{\ell} \right) \big|\cF_{\underline{v}} \right] \\ 
&= K^{i_2}_{j_2}(s-u) K^{i_2}_{j_2}(v-u) \int_{\underline{v}}^{u} K^{i_1}_{j_1}(s-r) K^{i_1}_{j_1}(v-r) \rd r.
\end{align*}
For $s \in \left[0, T \right]$ and $v \in (\underline{s}, s)$, using \eqref{eq.cU_Holder} and \eqref{eq.|K|} deduces 
\begin{align*}
\big| \hE \big[ D_{4,s}^{\ell,q} D_{4,v}^{\ell,q} \, \big|\, \cF_{\underline{v}} \big] \big| 
&= n^{4\alpha-2} \left| \int_{\underline{v}}^{v} K^{i_2}_{j_2}(s-u) K^{i_2}_{j_2}(v-u) \int_{\underline{v}}^{u} K^{i_1}_{j_1}(s-r) K^{i_1}_{j_1}(v-r) \rd r \rd u \right| \\ 
&\leq n^{4\alpha-2} \int_{\underline{v}}^{v} \left| K^{i_2}_{j_2}(s-u) K^{i_2}_{j_2}(v-u) \right| \int_{\underline{v}}^{u} \left| K^{i_1}_{j_1}(s-r) K^{i_1}_{j_1}(v-r) \right| \rd r \rd u \\ 
&\leq C n^{4\alpha-2} \int_{\underline{v}}^{v}\cK(s-u)\cK(v-u) \int_{\underline{v}}^{u} \cK(s-r)\cK(v-r) \rd r \rd u \\ 
&\leq C n^{4\alpha-2} \int_{\underline{v}}^{v}\cK^{2}(v-u) \int_{\underline{v}}^{u} \cK^{2}(v-r) \rd r \rd u \\ 
&\leq C \left(n^{2\alpha-1} \int_{\underline{v}}^{v} \cK^{2}(v-u) \rd u \right)^2 \leq C, 
\end{align*}
which implies 
\begin{align*}
\left| 1_{(\underline{s}, s)}(v) \hE \left[ \big( E^{j_1,j_2}_{\ell,q,s} \big)^2 D_{4,s}^{\ell,q} D_{4,v}^{\ell,q} \right] \right| 
\leq C \hE \left[ \big( E^{j_1,j_2}_{\ell,q,s} \big)^2 \right] 
\leq C
\end{align*}
with $C$ being independent of $n$, $s$ and $v$. Therefore, by the BCT w.r.t.\ $ \rd v \otimes \rd s$, one gets 
\begin{align*}
\left| (\mathrm{IV})_{\ell,q,1,2}^{i_1,j_1,i_2,j_2} \right|
\leq \int_{0}^{t} \int_{0}^{t} 1_{(\underline{s}, s)}(v) \left| \hE \left[ \big( E^{j_1,j_2}_{\ell,q,s} \big)^2 D_{4,s}^{\ell,q} D_{4,v}^{\ell,q} \right] \right| \rd v \rd s 
\xrightarrow[n \to \infty]{} 0,
\end{align*}
which together with \eqref{eq.IV_1_split} and \eqref{eq.limit_IV_11} indicates $(\mathrm{IV})_{\ell,q,1}^{i_1,j_1,i_2,j_2} \xrightarrow[n \to \infty]{L^{2}(\Omega)} 0$. Using the symmetry between $(\mathrm{IV})_{\ell,q,1}^{i_1,j_1,i_2,j_2}$ and $(\mathrm{IV})_{\ell,q,2}^{i_1,j_1,i_2,j_2}$, we also have $(\mathrm{IV})_{\ell,q,2}^{i_1,j_1,i_2,j_2} \xrightarrow[n \to \infty]{L^{2}(\Omega)} 0$. In conclusion, recalling \eqref{eq.iv_lq} and \eqref{eq.limit_IV_3} shows that \eqref{eq.(20)} holds. 

Hereto, the proof of Lemma \ref{lm.main} is completed. 
\end{proof}

\subsection{Proof of \eqref{lm.covV(ii)}} 

According to Fubini's theorem, one gets 
\begin{align*}
\mathbb{E}\left[\left|\< \hat{V}^{n,k,\ell}, W^{\ell} \>_t\right|^2\right] 
= \mathbb{E}\left[\left| \int_0^t n^{\alpha-\frac{1}{2}} (\widetilde{X}_s^k - \widehat{X}_{\underline{s}}^k) \rd s \right|^2\right] 
= 2 n^{2\alpha-1} \int_0^{t}\int_0^s \mathbb{E}\left[ (\widetilde{X}^k_s - \widehat{X}^k_{\underline{s}}) (\widetilde{X}^k_v - \widehat{X}^k_{\underline{v}}) \right] \rd v \rd s.
\end{align*}
For the integrand of the right integral, it follows from H\"older's inequality and \eqref{X-hatX_Lp} that 
\begin{align*}
\left| \mathbb{E}\left[ (\widetilde{X}^k_s - \widehat{X}^k_{\underline{s}}) (\widetilde{X}^k_v - \widehat{X}^k_{\underline{v}}) \right] \right| 
\leq C \big\| \widetilde{X}_s - \widehat{X}_{\underline{s}} \big\|_{L^2} \big\| \widetilde{X}_v - \widehat{X}_{\underline{v}} \big\|_{L^2} 
\leq C n^{1-2\alpha}. 
\end{align*}
Thus, in light of the BCT, it remains to show that $n^{2\alpha-1} \mathbb{E} \big[ (\widetilde{X}^k_s - \widehat{X}^k_{\underline{s}}) (\widetilde{X}^k_v - \widehat{X}^k_{\underline{v}}) \big] \rightarrow 0$ as $n \to \infty$, for $v < s$. In fact, it only needs to consider the case $v < \underline{s}$. For brevity, we denote
\begin{align*}
h^{k}_{j}(s, u) := \big( K^{k}_{j}(s-u) - K^{k}_{j}(\underline{s} - \underline{u}) \big) b^{j}(\widehat{X}_{\underline{u}}) 
\quad \mbox{and} \quad 
f_{\ell}^{k,j}(s, u) := \big( K^{k}_{j}(s-u) - K^{k}_{j}(\underline{s} - \underline{u}) \big) \sigma_{\ell}^{j}(\widehat{X}_{\underline{u}}),
\end{align*} 
as well as $\psi^{k,j}_{n,0}(s) := (E_{\alpha}^{k,j} (A s^{\alpha}) -E_{\alpha}^{k,j}(A\underline{s}^{\alpha})) X^{j}_{0}$. Then, using \eqref{eq.MLE} and \eqref{eq.auxiMLE} yields that for $v < \underline{s}$, 
\begin{align*}
&\ \mathbb{E} \big[ (\widetilde{X}^k_s - \widehat{X}^k_{\underline{s}}) (\widetilde{X}^k_v - \widehat{X}^k_{\underline{v}}) \big] 
= \mathbb{E} \big[ (\widetilde{X}^k_v - \widehat{X}^k_{\underline{v}}) \mathbb{E} \big[ (\widetilde{X}^k_s - \widehat{X}^k_{\underline{s}}) \, \big|\, \cF_{\underline{s}} \big] \big] \\ 
&= \sum_{j_1=1}^{d} \hE \left[ (\widetilde{X}^k_v - \widehat{X}^k_{\underline{v}}) \bigg( \psi^{k,j_1}_{n,0}(s) + \int_{0}^{\underline{s}} h^{k}_{j_1}(s, u) \rd u 
+ \int_{\underline{s}}^{s} K^{k}_{j_1}(s-u) b^{j_1}(\widehat{X}_{\underline{u}}) \rd u \bigg) \right] \\ 
&\quad + \sum_{j_1=1}^{d} \sum_{\ell=1}^{m} \hE \left[ (\widetilde{X}^k_v - \widehat{X}^k_{\underline{v}}) \int_{0}^{\underline{s}} f_{\ell}^{k,j_1}(s, u) \rd W_{u}^{\ell} \right] \\ 
&= \sum_{j_1=1}^{d} (\mathrm{I})_{j_1} + \sum_{j_1=1}^{d} \sum_{j_2=1}^{d} \sum_{\ell=1}^{m} (\mathrm{II})_{\ell}^{j_1,j_2} + \sum_{j_1=1}^{d}\sum_{j_2=1}^{d}\sum_{\ell=1}^{m} \sum_{q=1}^{m} (\mathrm{III})_{\ell,q}^{j_1,j_2} + \sum_{j_1=1}^{d}\sum_{j_2=1}^{d}\sum_{\ell=1}^{m} \sum_{q=1}^{m} (\mathrm{IV})_{\ell,q}^{j_1,j_2} 
\end{align*} 
with 
\begin{align*}
& (\mathrm{I})_{j_1} := \hE \big[ (\widetilde{X}^k_v - \widehat{X}^k_{\underline{v}}) \big( \psi^{k,j_1}_{n,0}(s) + \int_{0}^{\underline{s}} h^{k}_{j_1}(s, u) \rd u 
+ \int_{\underline{s}}^{s} K^{k}_{j_1}(s-u) b^{j_1}(\widehat{X}_{\underline{u}}) \rd u \big) \big], \\
& (\mathrm{II})_{\ell}^{j_1,j_2} := \hE \left[ \int_{0}^{\underline{s}} f_{\ell}^{k,j_1}(s, u) \rd W_{u}^{\ell} \big( \psi^{k,j_2}_{n,0}(v) + \int_{0}^{\underline{v}} h^{k}_{j_2}(v, u) \rd u 
+ \int_{\underline{v}}^{v} K^{k}_{j_2}(v-u) b^{j_2}(\widehat{X}_{\underline{u}}) \rd u \big) \right], \\
& (\mathrm{III})_{\ell,q}^{j_1,j_2} := \hE \left[ \int_{0}^{\underline{s}} f_{\ell}^{k,j_1}(s, u) \rd W_{u}^{\ell} \int_{0}^{\underline{v}} f_{q}^{k,j_2}(v, u) \rd W_{u}^{q} \right], \\
& (\mathrm{IV})_{\ell,q}^{j_1,j_2} := \hE \left[ \int_{0}^{\underline{s}} f_{\ell}^{k,j_1}(s, u) \rd W_{u}^{\ell} \int_{\underline{v}}^{v} K^{k}_{j_2}(v-u) \sigma_{q}^{j_2}(\widehat{X}_{\underline{u}}) \rd W_{u}^{q} \right]. 
\end{align*} 

Based on H\"older's inequality, \eqref{X-hatX_Lp} and \eqref{eq.cU_Holder} as well as the linear growth of $b$ and \eqref{eq.momentBounded}, one has 
\begin{align*}
& \hE \big[| (\widetilde{X}^k_v - \widehat{X}^k_{\underline{v}}) \psi^{k,j_1}_{n,0}(s) | \big] 
\leq \| \widetilde{X}^k_v - \widehat{X}^k_{\underline{v}} \|_{L^2} \| \psi^{k,j_1}_{n,0}(s) \|_{L^2}
\leq C n^{\frac{1}{2}-2\alpha}, \\
& \hE \big[| (\widetilde{X}^k_v - \widehat{X}^k_{\underline{v}}) b^{j_1}(\widehat{X}_{\underline{u}}) | \big] 
\leq \| \widetilde{X}^k_v - \widehat{X}^k_{\underline{v}} \|_{L^2} \| b^{j_1}(\widehat{X}_{\underline{u}}) \|_{L^2} 
\leq C n^{\frac{1}{2}-\alpha}, 
\end{align*}
where $C>0$ is independent of $n$, $v$, $s$ and $u$. For $(\mathrm{I})_{j_1}$, using the two estimates above and \eqref{lm.|K-K|H} obtains $| (\mathrm{I})_{j_1} | \leq C n^{\frac{1}{2}-2\alpha}$. Then $|n^{2\alpha-1} (\mathrm{I})_{j_1}| \leq C n^{-\frac{1}{2}}$, thereby $n^{2\alpha-1} (\mathrm{I})_{j_1} \to 0$ as $n \to \infty$. For $(\mathrm{II})_{\ell}^{j_1,j_2}$, $n^{2\alpha-1} (\mathrm{II})_{\ell}^{j_1,j_2} \to 0$ as $n \to \infty$ follows by a similar step. 

For $(\mathrm{III})_{\ell,q}^{j_1,j_2}$, it follows from \eqref{eq.ItoProduct Rule}, the linear growth of $\sigma$ and \eqref{eq.momentBounded} that 
\begin{align*}
\big|(\mathrm{III})_{\ell,q}^{j_1,j_2} \big|
&= \left| \hE \left[ \int_{0}^{\underline{s}}f_{\ell}^{k,j_1}(s, u) \rd W_{u}^{\ell} \int_{0}^{\underline{s}}1_{(0, \underline{v})}(u)f_{q}^{k,j_2}(v, u) \rd W_{u}^{q} \right] \right| \\ 
&\leq \sup_{r\in[0,T]} \hE \left[ \big| \sigma_{\ell}^{j_1}(\widehat{X}_{\underline{r}}) \sigma_{\ell}^{j_2}(\widehat{X}_{\underline{r}}) \big| \right] \int_{0}^{\underline{v}} \big| \big( K^{k}_{j_1}(s-u)- K^{k}_{j_1} (\underline{s}-\underline{u}) \big) \big( K^{k}_{j_2}(v-u)- K^{k}_{j_2} (\underline{v}-\underline{u}) \big) \big| \rd u \\ 
&\leq C \int_{0}^{\underline{v}} \big| \big( K^{k}_{j_1}(s-u)- K^{k}_{j_1} (\underline{s}-\underline{u}) \big) \big( K^{k}_{j_2}(v-u)- K^{k}_{j_2} (\underline{v}-\underline{u}) \big) \big| \rd u,
\end{align*}
which together with Lemma \ref{lm.cB_n} shows that $n^{2\alpha-1} (\mathrm{III})_{\ell,q}^{j_1,j_2} \to 0$ as $n \to \infty$.

For $(\mathrm{IV})_{\ell,q}^{j_1,j_2}$, invoking \eqref{eq.ItoProduct Rule}, \eqref{eq.momentBounded}, $K(\cdot) = \cK(\cdot) \cE(\cdot)$ and \eqref{eq.def:k&e} indicates that for $0 < v < \underline{s} < T$, 
\begin{align*}
\big| (\mathrm{IV})_{\ell,q}^{j_1,j_2} \big| 
&= \left| \hE \left[ \int_{0}^{\underline{s}}f_{\ell}^{k,j_1}(s, u) \rd W_{u}^{\ell} \int_{0}^{\underline{s}}1_{(\underline{v}, v)}(u) K^{k}_{j_2}(v-u) \sigma_{q}^{j_2}(\widehat{X}_{\underline{u}}) \rd W_{u}^{q} \right] \right| \\ 
&= \left| \hE \left[ \int_{\underline{v}}^{v}f_{l}^{k,j_1}(s, u) K^{k}_{j_2}(v-u) \sigma_{\ell}^{j_2}(\widehat{X}_{\underline{u}}) \rd u \right] \right|\\ 
&\leq \sup_{r\in[0,T]}\hE \left[ \big| \sigma_{\ell}^{j_1}(\widehat{X}_{\underline{r}}) \sigma_{\ell}^{j_2}(\widehat{X}_{\underline{r}}) \big| \right] \int_{\underline{v}}^{v} \big| K^{k}_{j_1}(s-u)- K^{k}_{j_1} (\underline{s}-\underline{u}) \big| \big| K^{k}_{j_2}(v-u) \big| \rd u \\ 
&\leq C \int_{\underline{v}}^{v}\cK(s-u) \big| \cE^{k}_{j_1}(s-u)- \cE^{k}_{j_1} (\underline{s}-\underline{u}) \big| \cK(v-u) \rd u \\ 
&\quad + C \int_{\underline{v}}^{v} \big| \cE^{k}_{j_1}(\underline{s}-\underline{u}) \big| \big|\cK(s-u) - \cK(\underline{s}-\underline{u}) \big| \cK(v-u) \rd u \\ 
&=: C (\mathrm{IV})_{\ell,q,1}^{j_1,j_2} + C (\mathrm{IV})_{\ell,q,2}^{j_1,j_2}. 
\end{align*}
By \eqref{eq.cU_Holder}, one has $ \big| \cE^{k}_{j_1}(s-u)- \cE^{k}_{j_1} (\underline{s}-\underline{u}) \big| \leq C n^{-\alpha}$, and hence
 \begin{align*}
(\mathrm{IV})_{\ell,q,1}^{j_1,j_2} 
\leq C n^{-\alpha} \int_{\underline{v}}^{v}\cK^2(v-u) \rd u 
\leq C n^{1-3\alpha}, 
\end{align*}
which implies that $n^{2\alpha-1} (\mathrm{IV})_{\ell,q,1}^{j_1,j_2} \to 0$, as $n \to \infty$. For $(\mathrm{IV})_{\ell,q,2}^{j_1,j_2}$, using \eqref{eq.cU_Holder} again and the change of variables $z = n (v-u)$ yields 
\begin{align*}
(\mathrm{IV})_{\ell,q,2}^{j_1,j_2} 
&\leq C \int_{\underline{v}}^{v} \big|\cK(s-u) - \cK(\underline{s}-\underline{u}) \big| \cK(v-u) \rd u \\
&= C n^{1-2\alpha} \int_{0}^{nv- \lfloor nv \rfloor} \big|\cK( z + ns - nv) - \cK( \lfloor ns \rfloor - \lfloor nv-z \rfloor) \big| \cK(z) \rd z. 
\end{align*}
In view of the fact that $|\cK(z + ns - nv) - \cK( \lfloor ns \rfloor - \lfloor nv-z \rfloor)| \to 0$ as $n \to \infty$ for $0 < v < \underline{s} \leq T$, due to the monotonicity of $\cK(\cdot)$, and that 
\begin{align*}
\int_{0}^1 \big|\cK(z + ns - nv) - \cK( \lfloor ns \rfloor - \lfloor nv-z \rfloor) \big| \cK(z) \rd z 
\leq C \int_{0}^1 \cK^2(z) \rd z 
\leq C, 
\end{align*} 
it follows from the DCT that 
\begin{align*} 
\int_{0}^1 \left|\cK(z + ns - nv) - \cK( \lfloor ns \rfloor - \lfloor nv-z \rfloor) \right|\cK(z) \rd z \xrightarrow[n\to\infty]{} 0, 
\end{align*} 
which implies that $n^{2\alpha-1} (\mathrm{IV})_{\ell,q,2}^{j_1,j_2} \to 0$, as $n \to \infty$. Thus, one can conclude that $n^{2\alpha-1} (\mathrm{IV})_{\ell,q}^{j_1,j_2} \to 0$, as $n \to \infty$. Hereto, the proof of \eqref{lm.covV(ii)} is completed. 
\hfill$\Box$

\section{Proofs of Lemmas \ref{lem.replaceR_n1} and Lemma \ref{lm.R_n1}}
 \label{sec.asym_behav_R_n}

\textit{Proof of Lemma \ref{lem.replaceR_n1}}. Firstly, using \eqref{eq.K-reg}, the linear growth of $b$ and \eqref{eq.momentBounded} yields
\begin{align*}
\big\| C_t^{n} \big\|_{L^{2}} 
\leq n^{\alpha-\frac{1}{2}} \sup_{r\in[0,T]} \big\|b(\widehat{X}_{r}) \big\|_{L^2}\int_{0}^{t} \left|K(t-s) - K(t-\underline{s}) \right| \rd s 
\leq C n^{-\frac12}, 
\end{align*}
which completes the proof of Lemma \ref{lem.replaceR_n1}(i).

Fix $\delta>0$, then it follows from \eqref{eq.def_R_tn1} and \eqref{eq.def_tildR_tn1} that 
\begin{align*}
\widehat{R}_{t}^{n} - \widetilde{R}_{t}^{n} 
&= n^{\alpha-\frac12} \int_0^t \big( K(t-s) - K(t-\underline{s}) \big) \big( \sigma(\widehat{X}_{\underline{s}}) - \sigma(X_t) \big) \rd W_s \\ 
&= n^{\alpha-\frac12} \int_0^{(t-\delta)_+} \big( K(t-s) - K(t-\underline{s}) \big) \big( \sigma(\widehat{X}_{\underline{s}}) - \sigma(X_t) \big) \rd W_s \\ 
&\quad + n^{\alpha-\frac12} \int_{(t-\delta)_+}^t \big( K(t-s) - K(t-\underline{s}) \big) \big( \sigma(\widehat{X}_{\underline{s}}) - \sigma(X_{(t-\delta)_+}) \big) \rd W_s \\ 
&\quad + n^{\alpha-\frac12} \int_{(t-\delta)_+}^t \big( K(t-s) - K(t-\underline{s}) \big) \big( \sigma(X_{(t-\delta)_+})- \sigma(X_t) \big) \rd W_s.
\end{align*}
By Minkowski's integral inequality and It\^o's isometry, one gets 
\begin{align*}
\big\|\widehat{R}_{t}^{n} -\widetilde{R}_{t}^{n} \big\|_{L^2}^2 
&\leq C n^{2\alpha-1} \bigg( \int_0^{(t-\delta) + } \big| K(t-s) - K(t-\underline{s}) \big|^2 \big( \big\|\sigma(\widehat{X}_{\underline{s}}) \big\|_{L^2}^2 + \big\| \sigma(X_{t}) \big\|_{L^2}^2 \big) \rd s \\ 
&\qquad + \int_{(t-\delta)_+ }^t \big| K(t-s) - K(t-\underline{s}) \big|^2 \big\| \sigma(\widehat{X}_{\underline{s}}) - \sigma(X_{(t-\delta)_+}) \big\|_{L^2}^2 \rd s \\
&\qquad + \int_{(t-\delta) + }^t \big| K(t-s) - K(t-\underline{s}) \big|^2 \big\|\sigma(X_{(t-\delta)_+}) -\sigma(X_{t}) \big\|_{L^2}^2 \rd s \bigg) \\ 
&=: C n^{2\alpha-1 } \big( (\mathrm{I}_n) + (\mathrm{II}_n) + (\mathrm{III}_n) \big).
\end{align*}
An application of the linear growth of $\sigma$ and \eqref{eq.momentBounded}, as well as $K(\cdot) = \cK(\cdot) \cE(\cdot)$, \eqref{eq.def:k&e}, \eqref{eq.cU_Holder}, \cite[Lemma 4.2]{DaiXiaoBu2022} and \cite[Lemma 6.3]{NualartSaikia2023} yields 
\begin{align}
(\mathrm{I}_n) 
&\leq \sup_{r \in [0, T]} \big( \big\|\sigma(\widehat{X}_{r}) \big\|_{L^2}^2 + \big\|\sigma(X_{r}) \big\|_{L^2}^2 \big) \int_{0}^{(t-\delta) + } \big| K(t-s) - K(t-\underline{s}) \big|^2 \rd s \nonumber \\ 
&\leq C \int_{0}^{(t-\delta) + } \big|(t-s)^{\alpha} - (t-\underline{s})^{\alpha} \big|^{2} \big|\cK(t-\underline{s})\big|^2 \rd s + C \int_{0}^{(t-\delta) + } \big|\cK(t-s) - \cK(t-\underline{s})\big|^{2} \rd s \nonumber \\ 
&\leq C n^{-2\alpha} + C n^{-2} \delta^{2\alpha-3}. \label{eq.J_1^n, 1}
\end{align} 
For the term $(\mathrm{II}_n)$, using \eqref{eq.lips} and \eqref{X-hatX_Lp} deduces that for any $\delta > n^{-1}$ and $s \in [(t-\delta)_{ + }, t]$, 
\begin{align*}
\big\|\sigma(\widehat{X}_{\underline{s}}) - \sigma(X_{(t-\delta)_{ + }}) \big\|_{L^2}^2 
\leq C \big| \underline{s} - (t-\delta)_{ + } \big|^{2\alpha-1} + C n^{1-2\alpha} 
\leq C \delta^{2\alpha-1} + C n^{1-2\alpha}
 \leq C \delta^{2\alpha-1}, 
\end{align*}
which together with \eqref{eq.K-reg} implies 
\begin{align}
(\mathrm{II}_n) 
\leq C \delta^{2\alpha- 1 } \int_{0}^{t} \big| K(t-s) - K(t-\underline{s}) \big|^2 \rd s 
\leq C n^{1-2\alpha} \delta^{2\alpha-1}. 
\end{align} 
For the term $(\mathrm{III}_n)$, using \eqref{eq.lips}, \eqref{eq.solutionReqularity} and \eqref{eq.K-reg} deduces 
\begin{align}
(\mathrm{III}_n) 
\leq C \delta^{2\alpha - 1} \int_{0}^{t} \big| K(t-s) - K(t-\underline{s}) \big|^2 \rd s \leq C n^{1-2\alpha} \delta^{2\alpha-1}. \label{eq.J_3^n, 1} 
\end{align} 
Thus, it follows from \eqref{eq.J_1^n, 1}--\eqref{eq.J_3^n, 1} that 
\begin{align*}
\limsup_{n \to \infty} \sup_{t \in [0, T]} \big\|\widehat{R}_{t}^{n} - \widetilde{R}_{t}^{n} \big\|_{L^2}^2 
\leq C \delta^{2\alpha-1}, 
\end{align*}
where $C>0$ is independent of $\delta$. Finally, by letting $\delta > 0$ arbitrarily small, the proof of Lemma \ref{lem.replaceR_n1}(ii) is completed. 
\hfill$\Box$

\begin{lemma} \label{lm.cM_limit_Cov} 
Let $(i,i_1,i_2,j,j_1,j_2)\in\{1,2,\cdots,d\}^6$ and $(\ell,\ell_1,\ell_2)\in\{1,2,\cdots,m\}^3$. For $t \in [0,T]$ and integer $n \geq 1$, define
\begin{align} \label{eq.def:cM_n}
 (\cM_n)_{\ell,t}^{i,j} 
= n^{\alpha-\frac12} \int_0^{t} K^{i}_{j} (t-s) - K^{i}_{j} (t -\underline{s}) \rd W_{s}^{\ell}.
\end{align}
Then, for $0 \leq t_1 < t_2 \leq T$, 
\begin{align} \label{eq.cM_limit_t1t2}
\lim_{n\to \infty} \hE \big[ (\cM_n)_{\ell_1,\underline{t_1}}^{i_1,j_1} (\cM_n)_{\ell_2,\underline{t_2}}^{i_2,j_2} \big] = 0. 
\end{align}
Moreover, for $t \in (0,T]$, 
\begin{align} \label{eq.cM_limit_t}
\lim_{n\to \infty} \hE \big[ (\cM_n)_{\ell_1,\underline{t}}^{i_1,j_1} (\cM_n)_{\ell_2,\underline{t}}^{i_2,j_2} \big] 
= 
\begin{cases}
\kappa_2^2(\alpha), &i_1=j_1, i_2=j_2, \text{ and } \ell_1=\ell_2,\\
0,&\text{otherwise}, 
\end{cases}
\end{align}
where the constant $\kappa_2^2(\alpha)$ is defined in \eqref{eq.kappa2}. 
\end{lemma}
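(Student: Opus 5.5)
By Itô's isometry, the covariance is zero unless $\ell_1=\ell_2=:\ell$. In that case it equals a deterministic integral:
\begin{align*}
\hE \big[ (\cM_n)_{\ell,\underline{t_1}}^{i_1,j_1} (\cM_n)_{\ell,\underline{t_2}}^{i_2,j_2} \big]
= n^{2\alpha-1} \int_0^{\underline{t_1}} \big( K^{i_1}_{j_1}(\underline{t_1}-s) - K^{i_1}_{j_1}(\underline{t_1}-\underline{s}) \big) \big( K^{i_2}_{j_2}(\underline{t_2}-s) - K^{i_2}_{j_2}(\underline{t_2}-\underline{s}) \big) \rd s .
\end{align*}
Write $K=\cK\cE$ and use \eqref{eq.def:k&e}. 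Set $N_1=\lfloor nt_1\rfloor$ and $N_2=\lfloor nt_2\rfloor$, and change variables $y=N_1-ns$. Since $\underline{t_i}$ are grid points, $\underline{t_i}-\underline{s}=(N_i-\lfloor N_1-y\rfloor)/n=(N_i-N_1+\lceil y\rceil)/n$. The integral therefore becomes $\int_0^{N_1} h^n(y)\,\rd y$, where
\begin{align*}
h^n(y) = \Big( \cK(y)\cE^{i_1}_{j_1}\big(\tfrac{y}{n}\big) - \cK(\lceil y\rceil)\cE^{i_1}_{j_1}\big(\tfrac{\lceil y\rceil}{n}\big) \Big)\Big( \cK(y+N_2-N_1)\cE^{i_2}_{j_2}\big(\tfrac{y+N_2-N_1}{n}\big) - \cK(\lceil y\rceil+N_2-N_1)\cE^{i_2}_{j_2}\big(\tfrac{\lceil y\rceil+N_2-N_1}{n}\big) \Big).
\end{align*}
The powers of $n$ cancel exactly, as in \eqref{eq.Bn_vs_bound}.

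The key step is a uniform domination $|h^n(y)|\le C\Psi(y)$, with $\Psi$ from \eqref{eq.defPsi}, valid for all $n$ and all $N_2\ge N_1$. This is the same argument as for \eqref{eq.g_nBound}.
\begin{itemize}
\item For $y\in(0,1]$, bound each factor by \eqref{eq.cU_Holder} and the monotonicity of $\cK$. The first factor is at most $C\cK(y)$, and the second is at most $C\cK(y)$ because $y+N_2-N_1\ge y$.
\item For $y>1$, write each difference as the integral of the derivative of $x\mapsto x^{\alpha-1}E_{\alpha,\alpha}(An^{-\alpha}x^\alpha)$ over an interval of length at most $1$, using \eqref{eq.MLder}. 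Each factor is then $O(y^{\alpha-2})$, so the product is $O(y^{2\alpha-4})$.
\end{itemize}
Since $\int_0^\infty\Psi<\infty$, the DCT applies on $(0,\infty)$ with $h^n\mathbf 1_{(0,N_1)}$.

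For \eqref{eq.cM_limit_t1t2}, fix $y$. Since $t_1<t_2$, $N_2-N_1\to\infty$, so the second factor is bounded by $C(\cK(y+N_2-N_1)+\cK(\lceil y\rceil+N_2-N_1))\to0$. The first factor is bounded by $C\cK(y)$. Hence $h^n(y)\to0$ pointwise, and the DCT gives the limit $0$.

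For \eqref{eq.cM_limit_t}, take $t_1=t_2=t>0$, so $N_2=N_1=\lfloor nt\rfloor\to\infty$ and $\mathbf 1_{(0,N_1)}\to1$. By \eqref{eq.limeij}, $\cE^{i}_{j}(u/n)\to\delta_{i,j}/\Gamma(\alpha)$ for each fixed $u$. Therefore
\[
h^n(y)\to \frac{\delta_{i_1,j_1}\delta_{i_2,j_2}}{\Gamma^2(\alpha)}\big(y^{\alpha-1}-\lceil y\rceil^{\alpha-1}\big)^2 .
\]
The DCT then gives the limit $\delta_{i_1,j_1}\delta_{i_2,j_2}\,\Gamma^{-2}(\alpha)\int_0^\infty (y^{\alpha-1}-\lceil y\rceil^{\alpha-1})^2\rd y$.

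It remains to identify this constant with $\kappa_2^2(\alpha)$ in \eqref{eq.kappa2}. In \eqref{eq.kappa2}, decompose $y\in(k-1,k]$, so $\lceil y\rceil=k$, and substitute $z=k-x$ for $x\in(0,1)$. Then $\int_0^1\int_{k-1}^{k}((k-x)^{\alpha-1}-k^{\alpha-1})^2\rd y\,\rd x=\int_{k-1}^k (z^{\alpha-1}-k^{\alpha-1})^2\rd z$, using $\lceil z\rceil=k$ for $z\in(k-1,k)$. Summing over $k$ yields exactly $\int_0^\infty (z^{\alpha-1}-\lceil z\rceil^{\alpha-1})^2\rd z$, as required.

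The main obstacle is the uniform domination for $y>1$ in the off-diagonal, non-diagonal-matrix case. There I need the derivative bound from \eqref{eq.MLder} to hold uniformly in the scaling $n^{-\alpha}$; this uses the negative definiteness of $A$, which keeps $E_{\alpha,\alpha-1}(An^{-\alpha}x^\alpha)$ bounded for all $x$. Everything else is dominated convergence.
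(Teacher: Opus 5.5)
Your proof is correct and follows essentially the same route as the paper. It uses It\^o's isometry, the substitution $y=\lfloor nt_1\rfloor-ns$ with the domination $C\Psi(y)$ taken from the proof of Lemma \ref{lm.cB_n}, and dominated convergence via \eqref{eq.limeij}. The only differences are cosmetic:
\begin{itemize}
\item For \eqref{eq.cM_limit_t}, the paper splits $[0,\underline{t}]$ into grid cells with $x=ns-q$, which produces the form \eqref{eq.kappa2} directly. You keep the single variable $y$ and identify the constant afterwards via $z=k-x$; this is the same parametrization, with $y=\lceil y\rceil-x$.
\item The boundedness of $E_{\alpha,\alpha-1}$ that you attribute to negative definiteness only needs $x\le nT$. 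That confines the matrix argument $An^{-\alpha}x^{\alpha}$ to a compact set, so negative definiteness is not required for this step.
\end{itemize}
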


\begin{proof}
 For any $0 \leq t_1<t_2 \leq T$, it follows from \eqref{eq.ItoProduct Rule} that 
 {\small 
\begin{align*}
\hE \big[ (\cM_n)_{\ell_1,\underline{t_1}}^{i_1,j_1} (\cM_n)_{\ell_2,\underline{t_2}}^{i_2,j_2} \big]
= n^{2\alpha-1} \hE \left[ \int_0^{\underline{t_1}}\big( K^{i_1}_{j_1} (\underline{t_1}-s) - K^{i_1}_{j_1} (\underline{t_1} -\underline{s}) \big) \big( K^{i_2}_{j_2} (\underline{t_2}-s) - K^{i_2}_{j_2} (\underline{t_2} -\underline{s}) \big) \rd \<W^{\ell_{1}},W^{\ell_{2}}\>_{s}\right].
\end{align*}}
\!\!By $K(\cdot) = \cK(\cdot) \cE(\cdot)$, \eqref{eq.def:k&e}, and the change of variables $y = \lfloor nt_1 \rfloor - ns$, one can arrive at 
\begin{align*}
\big| \hE \big[ (\cM_n)_{\ell_1,\underline{t_1}}^{i_1,j_1} (\cM_n)_{\ell_2,\underline{t_2}}^{i_2,j_2} \big] \big| 
&\leq n^{2\alpha-1} \int_0^{\underline{t_1}} \big| K^{i_1}_{j_1} (\underline{t_1}-s) - K^{i_1}_{j_1} (\underline{t_1} -\underline{s}) \big| \big| K^{i_2}_{j_2} (\underline{t_2}-s) - K^{i_2}_{j_2} (\underline{t_2} -\underline{s}) \big| \rd s \notag \\ 
&= n \int_0^{\underline{t_1}} \big| \cK( \lfloor nt_1 \rfloor-ns) \cE^{i_1}_{j_1}(\underline{t_1} -s) - \cK( \lfloor nt_1 \rfloor - \lfloor ns \rfloor) \cE^{i_1}_{j_1}\left(\underline{t_1} - \underline{s} \right) \big| \notag \\ 
&\qquad\ \ \, \times \big| \cK( \lfloor nt_2 \rfloor-ns) \cE^{i_2}_{j_2}(\underline{t_2} -s) - \cK( \lfloor nt_2 \rfloor - \lfloor ns \rfloor) \cE^{i_2}_{j_2}\left(\underline{t_2} - \underline{s} \right) \big| \rd s \notag \\ 
&= \int_0^{ \lfloor nt_1 \rfloor} g^n(y) \rd y, 
\end{align*}
where
{\small
\begin{align*}
g^n(y) 
&:= \left| \cK(y) \cE^{i_1}_{j_1}\left(\frac{y}{n}\right) - \cK( \lceil y\rceil ) \cE^{i_1}_{j_1} \left(\frac{\lceil y\rceil}{n}\right) \right| \\ 
&\quad\ \times \left| \cK(y + \lfloor nt_2 \rfloor- \lfloor nt_1 \rfloor) \cE^{i_2}_{j_2}\left(\frac{y + \lfloor nt_2 \rfloor- \lfloor nt_1 \rfloor}{n}\right) 
- \cK( \lceil y\rceil + \lfloor nt_2 \rfloor- \lfloor nt_1 \rfloor) \cE^{i_2}_{j_2}\left(\frac{\lceil y\rceil + \lfloor nt_2 \rfloor- \lfloor nt_1 \rfloor}{n}\right) \right|. 
\end{align*}} 
\!\!Then, by a similar argument as in the proof of Lemma \ref{lm.cB_n}, one gets $\lim_{n \to \infty} \big| \hE\big[ (\cM_n)_{\ell_1,\underline{t_1}}^{i_1,j_1} (\cM_n)_{\ell_1,\underline{t_2}}^{i_2,j_2} \big] \big| = 0$, and hence \eqref{eq.cM_limit_t1t2} holds.

Clearly, $\hE \big[ (\cM_n)_{\ell_1,\underline{t}}^{i_1,j_1} (\cM_n)_{\ell_2,\underline{t}}^{i_2,j_2} \big] = 0$ when $\ell_1\neq \ell_2$. Thus, it remains to show that \eqref{eq.cM_limit_t} holds when $\ell_1=\ell_2=\ell$. Using $K(\cdot) = \cK(\cdot) \cE(\cdot)$, \eqref{eq.def:k&e} and the change of variables $ns - q = x$ yields 
{\small 
\begin{align*}
&\ \hE \big[ (\cM_n)_{\ell,\underline{t}}^{i_1,j_1} (\cM_n)_{\ell,\underline{t}}^{i_2,j_2} \big] 
=n^{2\alpha-1} \int_0^{\underline{t}} \big( K^{i_1}_{j_1} (\underline{t}-s) - K^{i_1}_{j_1} (\underline{t} -\underline{s}) \big) \big( K^{i_2}_{j_2} (\underline{t}-s) - K^{i_2}_{j_2} (\underline{t} -\underline{s}) \big) \rd s \\ 
&=n^{2\alpha-1} \sum_{q=0}^{ \lfloor nt \rfloor-1} \int_{\frac qn}^{\frac{q+1}{n}} \left( K^{i_1}_{j_1}(\underline{t}-s)- K^{i_1}_{j_1}\left(\underline{t}-\frac qn\right) \right)\left( K^{i_2}_{j_2}(\underline{t}-s)- K^{i_2}_{j_2}\left(\underline{t}-\frac qn\right) \right) \rd s\\ 
&=\sum_{q=0}^{ \lfloor nt \rfloor-1} \int_{0}^{1} \left( \cK( \lfloor nt \rfloor-q-x) \cE^{i_1}_{j_1}\left(\frac{ \lfloor nt \rfloor-q-x}{n}\right) -\cK( \lfloor nt \rfloor-q) \cE^{i_1}_{j_1}\left(\frac{ \lfloor nt \rfloor -q}{n}\right)\right)\\
&\qquad\qquad\times\left( \cK( \lfloor nt \rfloor-q-x) \cE^{i_2}_{j_2}\left(\frac{ \lfloor nt \rfloor-q-x}{n}\right) -\cK( \lfloor nt \rfloor-q) \cE^{i_2}_{j_2}\left(\frac{ \lfloor nt \rfloor -q}{n}\right)\right) \rd x\\
&=\sum_{m=1}^{ \lfloor nt \rfloor} \int_{0}^{1} \left( (m-x)^{\alpha-1} \cE^{i_1}_{j_1}\left(\frac{m-x}{n}\right) -m^{\alpha-1} \cE^{i_1}_{j_1}\left(\frac{m}{n}\right)\right)\left( (m-x)^{\alpha-1} \cE^{i_2}_{j_2}\left(\frac{m-x}{n}\right) -m^{\alpha-1} \cE^{i_2}_{j_2}\left(\frac{m}{n}\right)\right) \rd x.
\end{align*}} 
\!\!Note that 
{\small 
\begin{align*}
\lim_{n\to\infty} \sum_{m= \lfloor nt \rfloor}^{\infty} \int_{0}^{1} \left| (m-x)^{\alpha-1} \cE\left(\frac{m-x}{n}\right) -m^{\alpha-1} \cE\left(\frac{m}{n}\right)\right|^2 \rd x = 0 
\end{align*} }
\!\!and 
{\small 
\begin{align*}
& \lim_{n\to\infty}
\left( (\lceil y \rceil-x)^{\alpha-1} \cE^{i_1}_{j_1}\left(\frac{\lceil y \rceil-x}{n}\right) -(\lceil y \rceil)^{\alpha-1} \cE^{i_1}_{j_1}\left(\frac{\lceil y \rceil}{n}\right)\right)
\left( (\lceil y \rceil-x)^{\alpha-1} \cE^{i_2}_{j_2}\left(\frac{\lceil y \rceil-x}{n}\right) -(\lceil y \rceil)^{\alpha-1} \cE^{i_2}_{j_2}\left(\frac{\lceil y \rceil}{n}\right)\right)\\
&= 
\begin{cases}
\frac{1}{\Gamma^{2}(\alpha)} \big( (\lceil y \rceil-x)^{\alpha-1} - (\lceil y \rceil)^{\alpha-1}\big)^2, & i_1 = j_1 \mbox{ and } i_2 = j_2,\\
0, & \text{otherwise}, 
\end{cases} \qquad \forall\, m \in N_{+} 
\end{align*} }
\!\!due to \eqref{eq.limeij} and \eqref{eq.cU_Holder}. Then, applying the DCT w.r.t.\ $ \rd y \otimes \rd x$ shows 
{\small 
\begin{align*}
&\ \lim_{n\to\infty} \hE \big[ (\cM_n)_{\ell,\underline{t}}^{i_1,j_1} (\cM_n)_{\ell,\underline{t}}^{i_2,j_2} \big] \\ 
&=\lim_{n\to\infty}\sum_{m=1}^{\infty} \int_{0}^{1} \left( (m-x)^{\alpha-1} \cE^{i_1}_{j_1}\left(\frac{m-x}{n}\right) -m^{\alpha-1} \cE^{i_1}_{j_1}\left(\frac{m}{n}\right)\right)\left( (m-x)^{\alpha-1} \cE^{i_2}_{j_2}\left(\frac{m-x}{n}\right) -m^{\alpha-1} \cE^{i_2}_{j_2}\left(\frac{m}{n}\right)\right) \rd x \\
&=\lim_{n\to\infty}\int_{0}^{1}\int_{0}^{\infty} \left( (\lceil y \rceil-x)^{\alpha-1} \cE^{i_1}_{j_1}\left(\frac{\lceil y \rceil-x}{n}\right) -(\lceil y \rceil)^{\alpha-1} \cE^{i_1}_{j_1}\left(\frac{\lceil y \rceil}{n}\right)\right)\\
&\qquad\qquad \times \left( (\lceil y \rceil-x)^{\alpha-1} \cE^{i_2}_{j_2}\left(\frac{\lceil y \rceil-x}{n}\right) -(\lceil y \rceil)^{\alpha-1} \cE^{i_2}_{j_2}\left(\frac{\lceil y \rceil}{n}\right)\right) \rd y \rd x\\
&=
\begin{cases}
\kappa_2^2(\alpha), & i_1 = j_1 \mbox{ and } i_2 = j_2,\\ 
0, & \text{otherwise}. 
\end{cases}
\end{align*} }
\!\!Therefore, one can also conclude that \eqref{eq.cM_limit_t} holds. 
\end{proof}

\vskip 0.5em 
\textit{Proof of Lemma \ref{lm.R_n1}}. 
We formulate $\widetilde{R}_{t}^{n} = (\widetilde{R}_{t}^{n,1}, \widetilde{R}_{t}^{n,2}, \cdots, \widetilde{R}_{t}^{n,d})^{\top}$ for $t \in [0,T]$. By \eqref{eq.def_tildR_tn1} and \eqref{eq.def:cM_n}, it holds that for any $i \in \{1,2,\cdots,d\}$, 
\begin{align*} 
\widetilde{R}_{t}^{n,i} 
= n^{\alpha - \frac{1}{2}} \sum_{j=1}^{d} \sum_{\ell=1}^{m} \int_0^t \big( K^{i}_{j}(t-s) - K^{i}_{j}(t-\underline{s}) \big) \sigma^j_{\ell}(X_t) \rd W^{\ell}_{s} 
= \sum_{j=1}^{d} \sum_{\ell=1}^{m} \sigma^j_{\ell}(X_t) (\cM_n)_{\ell,t}^{i,j}. 
\end{align*} 
Recall \eqref{eq.def:cM_n}, and let $\cM_n := \{ (\cM_n)_{\ell,\underline{t}}^{i,j}, t \in [0,T] \}$ 
denote a $d^2 m$-dimensional centered Gaussian process. Next, the proof of this lemma is divided into the following two steps.

\underline{Step 1}: In this step, we are devoted to showing that the finite-dimensional distributions of the process $\cM_n$ converge in distribution to those of $\kappa_2(\alpha)Z$. It follows from Lemma \ref{lm.cM_limit_Cov} that the covariance of $\cM_n$ converges to the covariance of $\kappa_2(\alpha)Z$. Thus, we only need to show that the correlation between the processes $\cM_n$ and $W$ converges to zero.

For any $(i,j)\in \{1,2,\cdots,d\}^2$, $(\ell,q)\in\{1,2,\cdots,m\}^2$ and interval $[a, b]$ with $0 \leq a < b \leq T$, using \eqref{eq.ItoProduct Rule} and \eqref{eq.K-reg} shows 
\begin{align*}
\big| \hE \big[ (\cM_n)_{\ell,\underline{t}}^{i,j}\,(W_b^q-W_a^q) \big] \big| 
\leq n^{\alpha-\frac12} \int_{0}^{\underline{t}} \left| K^{i}_{j}(\underline{t}-s)-K^{i}_{j}(\underline{t}-\underline{s})\right| \rd s 
\leq C n^{-\frac12}, 
\end{align*}
which implies $\lim_{n\to\infty} \hE \big[ (\cM_n)_{\ell,\underline{t}}^{i,j}\,(W_b^q-W_a^q) \big] = 0$. The proof of Step 1 is complete.

\underline{Step 2}. The results established in Step 1 imply that, for any square-integrable random variable $G$ that is measurable w.r.t.\ $W$, any $\hR^{d^2m}$-valued $\lambda_{1}, \dots, \lambda_{N}$, and any points $t_{1}, \dots, t_{N} \in (0, T]$, one can claim that 
{\small 
\begin{align} \label{eq.stable cov}
\lim_{n \to \infty} \hE \left[G \exp \left(\bm{i}\sum_{i=1}^{d}\sum_{j=1}^{d} \sum_{\ell=1}^{m}\sum_{q=1}^{N} (\lambda_q)^{i,j}_{\ell} (\cM_n)_{\ell,\underline{t_{q}}}^{i,j} \right) \right] 
= \hE [G] \exp \left( - \frac{\kappa^{2}_{2}(\alpha)}{2}\sum_{i=1}^{d}\sum_{j=1}^{d} \sum_{\ell=1}^{m}\sum_{q=1}^{N} (\lambda_q)^{i,i}_{\ell} (\lambda_q)^{j,j}_{\ell} \right).
\end{align} }
\!\!Indeed, when $G = c$ with $c$ a deterministic constant, \eqref{eq.stable cov} holds by Lemma \ref{lm.cM_limit_Cov}. For the other case, by an approximation argument in $L^2(\Omega)$, we can assume that $G = \exp \left( \sum_{q_1=1}^L\mu_{q_1}^{\top}(W_{s_{q_1}}-W_{s_{q_{1}-1}}) \right)$, where $\mu_1, \ldots, \mu_L \in\hR^d$ and $0 = s_0<s_1<\cdots<$ $s_L = T$, and hence 
\begin{align*}
&\ \lim_{n \to \infty}E \left[ \exp \left( \sum_{i=1}^d \sum_{q_1=1}^L\mu_{q_1}^i(W_{s_{q_1}}^i-W_{s_{q_1-1}}^i) + \bm{i}\sum_{i=1}^{d}\sum_{j=1}^{d} \sum_{\ell=1}^{m}\sum_{q=1}^{N} (\lambda_{q})^{i,j}_{\ell} (\cM_n)_{\ell,\underline{t_q}}^{i,j} \right) \right] \\
&= \exp \left(\frac12 \sum_{i=1}^d\sum_{q_1=1}^L(\mu_{q_1}^i)^2(s_{q_1}-s_{q_1-1}) - \frac{\kappa^{2}_{2}(\alpha)}{2}\sum_{i=1}^{d}\sum_{j=1}^{d} \sum_{\ell=1}^{m}\sum_{q=1}^{N} (\lambda_q)^{i,i}_{\ell} (\lambda_q)^{j,j}_{\ell} \right) \\
&= \hE [G] \exp \left( - \frac{\kappa^{2}_{2}(\alpha)}{2}\sum_{i=1}^{d}\sum_{j=1}^{d} \sum_{\ell=1}^{m}\sum_{q=1}^{N} (\lambda_q)^{i,i}_{\ell} (\lambda_q)^{j,j}_{\ell} \right).
\end{align*} 
In particular, by taking $G = \exp\big( \bm{i} \sum_{\ell=1}^m \sum_{j=1}^d\sum_{q_1=1}^L (\rho_{q_1})_{\ell}^{j}\sigma_{\ell}^{j}(X_{s_{q_1}}) \big)$ in \eqref{eq.stable cov}, with $\rho_1, \ldots, \rho_L\in\hR^{dm}$ and $0 = s_0 < s_1 < \cdots < s_L = T$, and applying L\'evy's continuous theorem, we obtain 
\small{\begin{align*}
\left(\{\sigma_{\ell}^{j}(X_{s_{1}})\},\cdots, \{\sigma_{\ell}^{j}(X_{s_L})\},\cM_{n,t_1},\cdots,\cM_{n,t_N}\right)\xrightarrow[n \to \infty]{d}\left(\{\sigma_{\ell}^{j}(X_{s_{1}})\},\cdots, \{\sigma_{\ell}^{j}(X_{s_L})\}, \kappa_2(\alpha)Z_{t_1},\cdots,\kappa_2(\alpha)Z_{t_N}\right) 
\end{align*}}
\!\!in $\hR^{d m L \times d^2 m N}$. Thus, it follows from the continuous mapping theorem that the convergence of the finite-dimensional distributions of $\{\sigma_{\ell}^{j}(X_t) (\cM_n)_{\ell,\underline{t}}^{i,j},t \in (0,T]\}$ to those of $\{\kappa_{2}(\alpha) \sigma_{\ell}^{j}(X_t) Z_{\ell,t}^{i,j},t\in (0,T]\}$, and thereby the finite-dimensional distributions of the process $\{\widetilde{R}^{n}_{\underline{t}},t\in (0,T]\}$ converge to those of $\{ \widetilde{\cR}_t,t\in (0,T]\}$, as $n$ goes to infinity. The proof is completed. 
\hfill$\Box$

\appendix

\section{Singular kernel and related estimates}
 \label{appen:ML function}

Firstly, we introduce the Mittag--Leffler function and its basic properties, see \cite[Chapter 1.2]{Podlubny1999} for more details. Fix $a \in (0, 1)$ and $b \in \hR$. The Mittag--Leffler function is defined by 
\begin{align*}
E_{a, b}(z) = \sum_{k = 0}^{\infty} \frac{z^k}{\Gamma(a k + b)}, \qquad \mbox{for } z \in \hC, 
\end{align*}
where $\Gamma(z) = \int_{0}^{\infty} t^{z-1} e^{-t} \rd t$ denotes the Gamma function. For simplicity, denote $E_{a} (z) = E_{a, 1} (z)$. For any real number $c \in (a\pi/2, a\pi)$, there exists some constant $C = C(a, b, c) > 0$ such that
\begin{align} \label{eq.ML-bound}
| E_{a, b}(z) | \leq C (1 + |z|)^{-1}, \qquad c \leq |\arg(z)| \leq \pi.
\end{align}
In addition, for any $\eta \in [0, 1]$ and $\lambda > 0$, 
\begin{align} \label{eq.MLder}
\frac{ \rd}{ \rd t} [ t^{a + \eta - 1} E_{a, a + \eta}(- \lambda t^{a}) ]
= 
\begin{cases}
- \lambda t^{a-1} E_{a, a}(- \lambda t^{a}), &\mbox{if } a + \eta = 1, \\ 
t^{a + \eta-2} E_{a, a + \eta-1}(- \lambda t^{a}), &\mbox{if } a + \eta \neq 1.
\end{cases}
\end{align}

For convenience, for $\alpha \in (\frac{1}{2},1)$ and the negative definite matrix $A \in \hR^{d \times d}$, define the functions 
\begin{align} \label{eq.def:k&e}
\cK(u) = u^{\alpha-1} \qquad \mbox{and} \qquad 
 \cE(u) = E_{\alpha, \alpha} (A u^{\alpha}), \qquad 
\mbox{for}~~u \in (0,\infty).
\end{align}
Then the kernel $K(u) = u^{\alpha-1} E_{\alpha, \alpha} (A u^{\alpha}) = \cK(u) \cE(u)$. For the function $\cK(\cdot)$, it follows from \cite[Lemmas 4.1 and 4.2]{DaiXiaoBu2022} that for any $t \in [0,T]$, 
\begin{align}
\begin{aligned}
\int_{0}^{t} |\cK(t - s) - \cK(\underline{t} - \underline{s})| \rd s \leq C h^{\alpha}, \qquad
\int_{0}^{t} |\cK(t - s) - \cK(\underline{t} - \underline{s})|^2 \rd s \leq C h^{2\alpha-1},\\
\int_{0}^{\underline{t}} |\cK(t - s) - \cK(\underline{t} - \underline{s})| \rd s \leq C h^{\alpha}, \qquad
\int_{0}^{\underline{t}} |\cK(t - s) - \cK(\underline{t} - \underline{s})|^2 \rd s \leq C h^{2\alpha-1}. 
\end{aligned} \label{eq.k-reg}
\end{align}
In view of \eqref{eq.ML-bound} and \eqref{eq.MLder}, $ \cE(\cdot)$ is a bounded $\alpha$-H\"older continuous function, that is to say, there exists $C > 0$ such that
\begin{align} \label{eq.cU_Holder} 
| \cE(u) | \leq C \qquad \mbox{and} \qquad | \cE(u_1) - \cE(u_2) | \leq C |u_1 - u_2|^{\alpha}, \qquad \forall\, u, u_1, u_2 \in (0,T], 
\end{align}
which implies that for any $t \in [0,T]$, 
\begin{align} \label{eq.|K|}
\int_{t}^{t+h} |K(s)| \rd s \leq C h^{\alpha}, \qquad 
\int_{t}^{t+h} |K(s)|^2 \rd s \leq C h^{2\alpha - 1}.
\end{align}
Moreover, recalling \eqref{eq.k-reg} \label{eq.u-Lip} yields that for any $t \in [0,T]$, 
\begin{equation}
\begin{aligned}
\int_{0}^{t} |K(t - s) - K(t - \underline{s})| \rd s \leq C h^{\alpha}, \qquad
\int_{0}^{t} |K(t - s) - K(t - \underline{s})|^2 \rd s \leq C h^{2\alpha-1},\\
\int_{0}^{\underline{t}} |K(t - s) - K(\underline{t} - \underline{s})| \rd s \leq C h^{\alpha}, \qquad
\int_{0}^{\underline{t}} |K(t - s) - K(\underline{t} - \underline{s})|^2 \rd s \leq C h^{2\alpha-1}. 
\end{aligned}
 \label{eq.K-reg}
\end{equation}

\begin{lemma} \label{lm.KHZ}
Let $t \in (0, T]$ and $\beta^{*} = \beta / (\beta-2)$ with $\beta \in (2,1/(1-\alpha))$. Then for any adapted $\hR^{d}$-valued process $H$ and $\hR^{d \times m}$-valued process $Z$, it holds that 
\begin{align}
& \hE \left[ \left| \int_{0}^{t} K(t-s) H_{s} \rd s \right|^{p} \right] \leq C \int_{0}^{t} \hE \left[ \left|H_{s} \right|^{p} \right] \rd s, \qquad\ \ \forall\, p \geq 2, \label{lm.KH} \\
& \hE \left[ \left| \int_{0}^{t} K(t-s) Z_{s} \rd W_{s} \right|^{p} \right] \leq C \int_{0}^{t} \hE \left[ \left|Z_{s} \right|^{p} \right] \rd s, \qquad \forall\, p > 2 \beta^{*}, \label{lm.KZ} \\
& \hE \left[ \left| \int_{0}^{\underline{t}} ( K(t-s) - K(\underline{t}-\underline{s})) H_{s} \rd s \right|^{p} \right] + \hE \left[ \left| \int_{\underline{t}}^{t} K(t-s) H_{s} \rd s \right|^{p} \right] \notag \\ 
&\qquad \leq C h^{\alpha p} \sup_{r \in [0, T]} \hE \left[ \left|H_{r} \right|^{p} \right], \qquad\quad\ \ \forall\, p \geq 1, \label{lm.|K-K|H} \\
&\hE \left[ \left| \int_{0}^{\underline{t}} ( K(t-s) - K(\underline{t}-\underline{s})) Z_{s} \rd W_{s} \right|^{p} \right] + \hE \left[ \left| \int_{\underline{t}}^{t} K(t-s) Z_{s} \rd W_{s} \right|^{p} \right] \notag \\ 
&\qquad \leq C h^{(\alpha -\frac{1}{2})p} \sup_{r \in [0, T]} \hE \left[ \left|Z_{r} \right|^{p} \right], \qquad \forall\, p \geq 2, \label{lm.|K-K|Z}
\end{align}
where $C>0$ depends only on $\alpha$, $\beta$, $p$, $A$ and $T$.
\end{lemma}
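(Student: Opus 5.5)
The first two estimates, \eqref{lm.KH} and \eqref{lm.KZ}, follow from H\"older's inequality and the BDG inequality, using only the pointwise bound $|K(u)| \leq C\cK(u) = Cu^{\alpha-1}$ from \eqref{eq.cU_Holder}.

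For \eqref{lm.KH}, fix $p\ge 2$ and write
\[
\Bigl|\int_0^t K(t-s)H_s \rd s\Bigr| \leq \int_0^t |K(t-s)|^{1/p'}\,|K(t-s)|^{1/p}|H_s|\rd s .
\]
Applying H\"older with exponents $p'$ and $p$ gives
\[
\Bigl(\int_0^T |K|\rd u\Bigr)^{p-1}\int_0^t |K(t-s)||H_s|^p \rd s .
\]
This is not quite the stated form, since a weight remains. The cleaner route is to split $|K(t-s)|\,|H_s|$ as $|K(t-s)| \cdot |H_s|$ and use H\"older with $\int_0^t |K(t-s)|^{p'}\rd s<\infty$. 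This requires $p'(1-\alpha)<1$, which holds for all $p\ge2$ because $2(1-\alpha)<1$. Then take expectations and use Fubini.

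For \eqref{lm.KZ}, first apply BDG to get
\[
\hE|\cdot|^p\le C\,\hE\Bigl(\int_0^t|K(t-s)|^2|Z_s|^2\rd s\Bigr)^{p/2}.
\]
Then apply H\"older with exponent $\beta/2$ on $|K|^2$, which is integrable since $\beta(1-\alpha)<1$, and with the conjugate exponent $\beta^*$ on $|Z_s|^2$. This gives
\[
C\,\hE\Bigl(\int_0^t |Z_s|^{2\beta^*}\rd s\Bigr)^{p/(2\beta^*)} .
\]
Since $p/(2\beta^*)>1$, a final Jensen/H\"older step on $[0,T]$ yields $C\int_0^t\hE|Z_s|^p\rd s$. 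This is exactly why the restriction $p>2\beta^*$ appears.

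For \eqref{lm.|K-K|H}, use Minkowski's integral inequality:
\[
\Bigl\|\int_0^{\underline t}(K(t-s)-K(\underline t-\underline s))H_s\rd s\Bigr\|_{L^p}\le \sup_r\|H_r\|_{L^p}\int_0^{\underline t}|K(t-s)-K(\underline t-\underline s)|\rd s \le C h^{\alpha}\sup_r\|H_r\|_{L^p},
\]
where the last step uses \eqref{eq.K-reg}. The local term is handled the same way, using $\int_{\underline t}^t|K(t-s)|\rd s\le Ch^\alpha$ from \eqref{eq.|K|}. Raising both to the $p$-th power gives the claim for all $p\ge1$.

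For \eqref{lm.|K-K|Z}, first apply BDG. Then apply Minkowski's integral inequality in $L^{p/2}$, which is valid for $p\ge2$:
\[
\Bigl\|\int_0^{\underline t}|K(t-s)-K(\underline t-\underline s)|^2|Z_s|^2\rd s\Bigr\|_{L^{p/2}}\le \sup_r\|Z_r\|_{L^p}^2\int_0^{\underline t}|K(t-s)-K(\underline t-\underline s)|^2\rd s\le Ch^{2\alpha-1}\sup_r\|Z_r\|_{L^p}^2 .
\]
The last bound is again \eqref{eq.K-reg}; the local piece uses the second bound in \eqref{eq.|K|}. Taking the $p/2$ power finishes.

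The only genuinely delicate point is the matrix-kernel versions of \eqref{eq.K-reg}. These are quoted earlier, so they are assumed here. If one had to verify them, the approach would be to split
\[
K(t-s)-K(\underline t-\underline s)=\cK(t-s)\bigl(\cE(t-s)-\cE(\underline t-\underline s)\bigr)+\bigl(\cK(t-s)-\cK(\underline t-\underline s)\bigr)\cE(\underline t-\underline s).
\]
Each piece is then controlled by the $\alpha$-H\"older bound \eqref{eq.cU_Holder} on $\cE$ together with \eqref{eq.k-reg} for $\cK$.

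The main obstacle is therefore \eqref{lm.KZ}, where the exponent bookkeeping must produce exactly $p>2\beta^*$.
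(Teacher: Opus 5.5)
Your proposal is correct and follows essentially the same route as the paper. It uses H\"older with $\int_0^t|K(t-s)|^{p/(p-1)}\rd s<\infty$ for \eqref{lm.KH}, and BDG plus H\"older with exponents $\beta/2$ and $\beta^*$ followed by a Jensen step for \eqref{lm.KZ}. For the two local estimates it uses BDG and Minkowski's integral inequality together with \eqref{eq.|K|} and \eqref{eq.K-reg}, exactly as the paper does; your sketch of how \eqref{eq.K-reg} follows from \eqref{eq.k-reg} and \eqref{eq.cU_Holder} is a small extra that the paper only asserts.
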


\begin{proof}
Let $p \geq 2$, using H\"older's inequality and \eqref{eq.cU_Holder} indicates 
\begin{align*}
\hE \left[ \left| \int_{0}^{t} K(t-s) H_{s} \rd s \right|^{p} \right] 
\leq C \left( \int_0^t |K(t-s)|^{\frac{p}{p-1}} \rd s \right)^{p-1} \int_0^t \hE \left[|H_{s}|^{p} \right] \rd s 
\leq C \int_{0}^{t} \hE \left[|H_{s}|^{p} \right] \rd s, 
\end{align*}
which implies that \eqref{lm.KH} holds.

Let $p > 2\beta^*$, using the BDG inequality, H\"older's inequality and Fubini's theorem yields
\begin{align*}
\hE \left[ \left| \int_{0}^{t} K(t-s) Z_{s} \rd W_{s} \right|^{p} \right] 
&\leq C \hE \left[ \left( \int_0^t | K(t-s) |^2|Z_s|^2 \rd s \right)^{\frac{p}{2}} \right] \\ 
&\leq C \left( \int_0^t (t-s)^{2(\alpha-1)\frac{\beta}{2}} \rd s \right)^{\frac{2}{\beta} \cdot \frac{p}{2}} \hE \left[ \left( \int_0^t |Z_s|^{2\beta^*} \rd s \right)^{\frac{1}{\beta^*} \cdot \frac{p}{2}} \right] \\ 
&\leq C \left( \int_0^t (t-s)^{\beta(\alpha-1)} \rd s \right)^{\frac{p}{\beta}} \hE \left[t^{\frac{p}{2\beta^*}-1} \int_0^t |Z_s|^p \rd s \right] \\
& \leq C \int_0^t \hE [|Z_s|^p] \rd s, 
\end{align*}
which implies that \eqref{lm.KZ} holds.

Let $p \geq 1$, using Minkowski's integral inequality, \eqref{eq.|K|} and \eqref{eq.K-reg} reads 
\begin{align*}
&\ \hE \left[ \left| \int_{0}^{\underline{t}} ( K(t-s) - K(\underline{t}-\underline{s})) H_{s} \rd s \right|^{p} \right] + \hE \left[ \left| \int_{\underline{t}}^{t} K(t-s) H_{s} \rd s \right|^{p} \right] \\ 
&\leq \left( \int_{0}^{\underline{t}} | K(t-s) - K(\underline{t}-\underline{s})| \hE \left[|H_{s}|^{p} \right]^{\frac{1}{p}} \rd s \right)^{p} + \left( \int_{\underline{t}}^{t} | K(t-s) | \hE \left[|H_{s}|^{p} \right]^{\frac{1}{p}} \rd s \right)^{p} \\ 
&\leq \left[ \left( \int_{0}^{\underline{t}} \left| K(t-s) - K(\underline{t}-\underline{s}) \right| \rd s \right)^{p} + \left( \int_{\underline{t}}^{t} | K(t-s) | \rd s \right)^{p} \right] \sup_{r \in [0, T]} \hE \left[|H_{r}|^{p} \right] \\ 
&\leq C h^{\alpha p} \sup_{r \in [0, T]} \hE \left[|H_{r}|^{p} \right], 
\end{align*}	
which implies that \eqref{lm.|K-K|H} holds.

Let $p \geq 2$, using the BDG inequality, Minkowski's integral inequality, \eqref{eq.|K|} and \eqref{eq.K-reg} shows 
\begin{align*}
&\ \hE \left[ \left| \int_{0}^{\underline{t}} ( K(t-s) - K(\underline{t}-\underline{s})) Z_{s} \rd W_{s} \right|^{p} \right] + \hE \left[ \left| \int_{\underline{t}}^{t} K(t-s) Z_{s} \rd W_{s} \right|^{p} \right] \\ 
&\leq C \hE \left[ \left( \int_0^{\underline{t}} | K(t-s) - K(\underline{t}-\underline{s})|^2|Z_s|^2 \rd s \right)^{\frac{p}{2}} \right] + C \hE \left[ \left( \int_{\underline{t}}^{t}| K(t-s) |^2|Z_s|^2 \rd s \right)^{\frac{p}{2}} \right] \\ 
&\leq C \left[ \left( \int_0^{\underline{t}} | K(t-s) - K(\underline{t}-\underline{s})|^2 \rd s \right)^{\frac{p}{2}} + \left( \int_{\underline{t}}^{t}| K(t-s) |^2 \rd s \right)^{\frac{p}{2}} \right] \sup_{r \in [0, T]} \hE \left[|Z_{r}|^{p} \right] \\ 
&\leq C h^{(\alpha-\frac{1}{2})p} \sup_{r \in [0, T]} \hE \left[|Z_{r}|^{p} \right], 
\end{align*}
which implies that \eqref{lm.|K-K|Z} holds. The proof is completed.
\end{proof}


\end{document}